\newcommand{\ff}{\mathbb{F}}
\newcommand{\cod}{\mathcal{C}}
\newcommand{\tr}{\operatorname{Tr}}
\newcommand{\CC}{\mathcal{C}}
\newcommand{\LL}{\mathcal{L}}
\newtheorem{thm}{Theorem}[section]
\newtheorem{prop}[thm]{Proposition}
\newtheorem{lem}[thm]{Lemma}
\newtheorem{coro}[thm]{Corollary}
\theoremstyle{definition}
\newtheorem{rem}[thm]{Remark}
\newtheorem{exam}[thm]{Example}
\newtheorem{defi}[thm]{Definition}
\theoremstyle{remark}
\def\blue{\color{blue}}
\begin{document}
\numberwithin{equation}{section}
\title[Weight distribution of cyclic codes and curves]{Weight distribution of cyclic codes defined by \\ quadratic forms and related curves}
\author%[R.\@ Podest\'a, D.\@ Videla]
{Ricardo A.\@ Podest\'a, Denis E.\@ Videla}
%\dedicatory{\today}
\keywords{Cyclic codes, quadratic forms, character sums, weight distribution, optimal curves.}
\thanks{2010 {\it Mathematics Subject Classification.} Primary 94B15; Secondary 11T24, 11E04, 11G20.}
\thanks{Partially supported by CONICET, FONCYT and SECyT-UNC}
\email{podesta@famaf.unc.edu.ar}
\email{devidela@famaf.unc.edu.ar}
\address{Ricardo A.\@ Podest\'a -- CIEM, Universidad Nacional de C\'ordoba, CONICET, FAMAF. \linebreak 
	Av.\@ Medina Allende 2144, Ciudad Universitaria, (X5000HUA) C\'ordoba, República Argentina.} 
\address{Denis E.\@ Videla      -- CIEM, Universidad Nacional de C\'ordoba, CONICET, FAMAF. \linebreak 
	Av.\@ Medina Allende 2144, Ciudad Universitaria, (X5000HUA) C\'ordoba, República Argentina.} 
%{\it E-mail: devidela@famaf.unc.edu.ar}}
%{\it E-mail: podesta@famaf.unc.edu.ar}}

\begin{abstract}
We consider cyclic codes $\CC_\LL$ associated to %families $\LL$ of 
quadratic trace forms in $m$ variables $Q_R(x) = \tr_{q^m/q}(xR(x))$ determined by a family $\LL$ of $q$-linearized polynomials $R$ over $\ff_{q^m}$, and three related codes $\CC_{\LL,0}$, $\CC_{\LL,1}$ and $\CC_{\LL,2}$.
We describe the spectra for all these codes when $\LL$ is an even rank family, 
in terms of the distribution of ranks of the forms $Q_R$ in the family $\LL$, 
and we also compute the complete weight enumerator for $\CC_\LL$. 
In particular, considering the family $\LL = \langle x^{q^\ell} \rangle$, with $\ell$ fixed in $\mathbb{N}$, we give the weight distribution of four parametrized families of cyclic codes $\CC_\ell$, $\CC_{\ell,0}$, $\CC_{\ell,1}$ and $\CC_{\ell,2}$ over 
$\mathbb{F}_q$ with zeros $\{ \alpha^{-(q^\ell+1)} \}$, $\{ 1,\, \alpha^{-(q^\ell+1)} \}$, $\{ \alpha^{-1},\,\alpha^{-(q^\ell+1)} \}$ and  $\{ 1,\,\alpha^{-1},\,\alpha^{-(q^\ell+1)}\}$ respectively, where $q = p^s$ with $p$ prime, $\alpha$ is a generator of 
$\mathbb{F}_{q^m}^*$ and $m/(m,\ell)$ is even. 
Finally, we give simple necessary and sufficient conditions for Artin-Schreier curves $y^p-y = xR(x) + \beta x$, $p$ prime, associated to polynomials $R \in \LL$ to be optimal.
We then obtain several maximal and minimal such curves in the case $\LL = \langle x^{p^\ell}\rangle$ and 
$\LL = \langle x^{p^\ell}, x^{p^{3\ell}} \rangle$.  
\end{abstract}

\maketitle

\section{Introduction}
Let $q=p^s$, with $p$ prime. A linear code $\CC$ of length $n$ over $\mathbb{F}_{q}$ is a subspace of $\mathbb{F}_{q}^n$ of 
dimension $k$. If $\CC$ has minimal distance $d = \min \{d(c,c') : c,c' \in \CC, c\ne c'\}$, where $d(\cdot , \cdot)$ is the Hamming distance in $\ff_q^n$, then $\CC$ is called an $[n,k,d]$-code. One of the most important families of codes are the cyclic ones. A code is cyclic if given a codeword $c=(c_1,\ldots,c_n) \in \CC$ the cyclic shift $s(c)=(c_n,c_1,\ldots,c_{n-1})$ is also in $\CC$.   
The weight of $c \in \CC$ is $w(c) = \# \{0\le i\le n : c_i\ne0 \}$; that is, the number of non-zero coordinates of $c$. 
For $0 \le i \le n$ the numbers $A_i=\#\{ c\in \CC : w(c)=i\}$ are called the frequencies and the sequence
$Spec(\CC) = (A_0,A_1,\ldots,A_n)$ is called \textit{the weight distribution} or \textit{the spectrum} of $\CC$.
A good reference for general coding theory is the book \cite{HP}.

Fix $\alpha$ a generator of $\mathbb{F}_{q^m}^*$. Consider $h(x) = h_1(x) \cdots h_t(x) \in \mathbb{F}_{q}[x]$ where $h_{j}(x)$ are different irreducible polynomials over $\mathbb{F}_{q}$. For each $j=1,\ldots, t$, let $g_j = \alpha^{-s_j}$ be a root of $h_j(x)$, 
$n_j$ be the order of $g_j$ and $m_j$ be the minimum positive integer such that $q^{m_j} \equiv 1 \pmod{n_j}$. 
Then, $\deg(h_{j}(x))=m_j$ for all $j$. 
Put $n = \tfrac{q^m-1}{\delta}$ where $\delta = \gcd(q^m-1, s_1,\ldots, s_t)$. 
Then, by Delsarte's Theorem of trace and duals (\cite{D}), the $q$-ary code 
$\CC=\{ c(a_1,\ldots,a_t) : a_j \in \mathbb{F}_{q^{m_j}} \}$ 
with 
\begin{equation} \label{delsarte} 
c(a_1,\ldots,a_{t}) = \Big( \sum_{j=1}^{t}{\tr_{q^{m_j}/q}(a_j)}, \sum_{j=1}^{t}{\tr_{q^{m_j}/q}(a_{j} g_{j})}, 
\ldots, \sum_{i=1}^{t}{\tr_{q^{m_j}/q}(a_{j}g_{j}^{n-1})} \Big),
\end{equation}
where $\tr_{q^{m_j}/q}$ is the trace function from $\mathbb{F}_{q^{m_j}}$ to $\mathbb{F}_q$, 
is an $[n,k]$-cyclic code with check polynomial $h(x)$ and dimension $k = m_1+\cdots+m_t$.

The computation of the spectra of cyclic codes is in general a difficult task. 
The recent survey \cite{DLY} of Dinh, Li and Yue shows the progress made on this problem in the last 20 years using different techniques: exponential sums, special nonlinear functions over finite fields, quadratic forms, Hermitian forms graphs, Cayley graphs, Gauss and Kloosterman sums. 
In \cite{FL1}, Feng and Luo computed the weight distribution of the cyclic code of length $n=p^m-1$ with zeros $\{ \alpha ^{-1}, 
\alpha^{-(p^\ell+1)} \}$, where $\alpha$ is a generator of $\mathbb{F}_{p^m}^*$, $\ell\geq 0$ and $m/(m,\ell)$ odd, by using a perfect nonlinear function. In another work (\cite{FL}), they used quadratic forms to calculate the weight distribution of the cyclic codes with zeros $\{ \alpha^{-2}, \alpha^{-(p^\ell+1)} \}$ and $\{ \alpha^{-1}, \alpha^{-2}, \alpha^{-(p^\ell+1)} \}$, respectively, when $p$ is an odd prime and $(m,\ell)=1$. These methods were used by other authors to calculate the spectra %weight distribution 
of other cyclic codes over 
$\mathbb{F}_p$ when $p$ is an odd prime. All these results are summarized in Theorem~2.4 in \cite{DLY}.

\medskip

In this paper, we will explicitly compute the weight distributions of some general families of cyclic codes over $\mathbb{F}_q$. In particular, we will compute the spectra of cyclic codes with zeros 
$\{\alpha^{-(q^\ell+1)}\}$, $\{1, \alpha^{-(q^\ell+1)}\}$, $\{ \alpha^{-1}, \alpha^{-(q^\ell+1)}\}$ and 
$\{1,\alpha^{-1},\alpha^{-(q^\ell+1)}\}$ in all characteristics, where $\alpha$ is a  generator of $\mathbb{F}_{q^m}^{*}$ and 
$m/(m,\ell)$ is even (i.e.\@ new cases not covered in \cite{FL1} and more general ones), by using quadratic forms and related exponential sums. 

We now give a brief summary of the results in the paper.
In Section 2 we recall quadratic forms $Q$ in $m$ variables over finite fields and their absolute invariants: the rank and the type. 
We define certain exponential sums $S_{Q,b}(\beta)$ and compute their values and distributions (Lemma~\ref{SQ}). 
We then consider the particular quadratic form $Q_{\gamma, \ell}(x) = \tr_{q^m/q}(\gamma x^{q^\ell+1})$, with $\gamma \in \ff_q$, $\ell \in \mathbb{N}$. We recall the distribution of rank and types given by Klapper in \cite{K1} and \cite{K2}.  
These facts will be later used (Sections 3--5) to compute the spectra of some families of cyclic codes.

In the next section, we consider cyclic codes defined by general quadratics forms determined by $q$-linearized polynomials and compute their spectra in some cases. More precisely, we consider $\LL = \langle x^{q^{\ell_1}}, x^{q^{\ell_2}}, \ldots, x^{q^{\ell_s}} \rangle \subset \ff_{q^m}[x]$, the associated code 
$$\CC_\LL = \{ (\tr_{q^m/q}(xR(x)))_{x\in \ff_{q^m}^*} : R\in \LL \}$$  
and three related codes $\CC_{\LL,0}$, $\CC_{\LL,1}$ and $\CC_{\LL,2}$ (see \eqref{codigoCL0}). 
If $\LL$ is an even rank family (see Definition~\ref{L even rank}) we give the weight distributions of 
$\CC_{\LL}$, $\CC_{\LL,0}$, $\CC_{\LL,1}$ and $\CC_{\LL,2}$ (see Theorems~\ref{Spec C_L} and \ref{Spec C_L,1} and Tables~1 to 4).
In Proposition~\ref{cweCL}, we also give the complete weight enumerator of $\CC_{\LL}$.

In the next sections we consider two particular even rank families: $\LL= \langle x^{q^\ell} \rangle$ and 
$\LL = \langle x^{q^\ell}, x^{q^{3\ell}} \rangle$, with $\ell \in \mathbb{N}$.
In Section 4, we compute the spectrum of the code $\CC_\ell$ defined by the family of quadratic forms
$Q_{\gamma,\ell} = \tr_{q^m/q}(\gamma x^{q^\ell+1})$, $\gamma \in \ff_{q^m}$, 
and the spectra of the related codes $\CC_{\ell,0}$, $\CC_{\ell,1}$ and $\CC_{\ell,2}$ (see Theorems \ref{DistCl} and \ref{Dist Cl1} and Tables 5--8). As a consequence, $\CC_\ell$ turns out to be a 2-weight code. The complete weight enumerator of $\CC_\ell$ is given in Corollary \ref{cwe Cl}. In Section 5 we obtain similar results for the codes $\CC_{\ell,3\ell}$, $\CC_{\ell,3\ell,0}$, 
$\CC_{\ell,3\ell, 1}$ and $\CC_{\ell,3\ell,2}$ (see Theorem \ref{spec cl3l} and Tables 9 and 10).

In the last section, we consider Artin-Schreier curves of the form 
$$ C_{R,\beta} \, : \, y^p-y = xR(x) + \beta x$$ 
where $p$ is prime, $\beta \in \ff_{p^m}$ and $R$ is a $p$-linearized polynomial over $\ff_{p^m}$. % in an even rank family $\LL$.
In Proposition 6.1 we give simple necessary and sufficient conditions for these curves to be optimal, that is curves attaining the equality in the Hasse-Weil bound, %$|\#C_{R,\beta}(\ff_{q} - (q-1)| \le 2 g \sqrt q$, $q=p^m$,
in terms of the degree of $R$ and the rank $r$ of the associated quadratic form $Q_R(x) = \tr_{p^m/p}(xR(x))$. 
We then show in Theorem 6.3 that there are several maximal and minimal curves in the family 
$$y^p-y = \gamma x^{p^\ell+1}+\beta x, \qquad \gamma \in \mathbb{F}_{p^m}^*, \quad \beta\in \mathbb{F}_{p^m}.$$ 
In the binary case $p=2$, Van der Geer and Van der Vlugt have found the same curves for $\ell=1$ and $\beta=0$ (see \cite{VGVV1}).
Thus, we extend their result for any $p$, $\ell$ and $\beta$. 
We also show the existence of optimal curves of the form
$$x^p-y = \gamma_1 x^{p^\ell+1} + \gamma_3 x^{p^{3\ell}+1} + \beta x$$
with $\gamma_1, \gamma_3 \in \mathbb{F}_{p^m}^*$, $\beta \in \mathbb{F}_{p^m}$.

\section{Quadratic forms over finite fields and exponential sums}
A quadratic form in $\ff_{q^m}$ is an homogeneous polynomial $q(x)$ in $\ff_{q^m}[x]$ of degree 2. 
We want to consider more general functions. 
Any function 
$$Q : \mathbb{F}_{q^m} \rightarrow \mathbb{F}_{q}$$ 
can be identified with a polynomial of $m$ variables over $\mathbb{F}_{q}$ via an isomorphism $\mathbb{F}_{q^m}\simeq\mathbb{F}_{q}^m$ of $\mathbb{F}_{q}$-vector spaces. 
Such $Q$ is said to be a \textit{quadratic form} if the corresponding polynomial is homogeneous of degree $2$. 
The \textit{rank} of $Q$, is the minimum number $r$ of variables needed to represent $Q$ as a polynomial in several variables.
Alternatively, the rank of $Q$ can be computed as the codimension of the $\mathbb{F}_{q}$-vector space 
$V = \{ y\in \mathbb{F}_{q^m}: Q(y)=0, Q(x+y)=Q(x),  \forall x\in \mathbb{F}_{q^m} \}$.
That is $|V|=q^{m-r}$. 
Two quadratic forms $Q_{1},Q_{2}$ are \textit{equivalent} if there is an invertible $\mathbb{F}_{q}$-linear function $S:\mathbb{F}_{q^m}\rightarrow\mathbb{F}_{q^m}$ such that $Q_1(x) =  Q_{2}(S(x))$.

Fix $Q$ a quadratic form from $\mathbb{F}_{q^m}$ to $\mathbb{F}_{q}$. It will be convenient to consider, for each 
$\beta\in\mathbb{F}_{q^m}$ and $\xi \in \mathbb{F}_{q}$, the number  
\begin{equation} \label{NQ}
N_{Q,\beta}(\xi) = \# \{ x\in \ff_{q^m} : Q(x) + \tr_{q^m/q}(\beta x) =\xi \}.
\end{equation}
We will abbreviate 
$N_{Q}(\xi)=N_{Q,0}(\xi)$, $N_{Q,\beta}=N_{Q,\beta}(0)$ and 
$N_Q = N_Q(0)=\#\ker Q$.
It is a classic result that quadratic forms over finite fields are classified in three different equivalent classes. This classification depends on the parity of the characteristic (see for instance \cite{LN}). 
But in both characteristics (even or odd), there are 2 classes with even rank (usually called type 1 and 3) and one of odd rank. 
For even rank, we will use the notation  
\begin{equation} \label{EQ}
\varepsilon_Q = \begin{cases} 
+1 & \qquad \text{if $Q$ is of type 1}, 
\\[1mm]-1 & \qquad \text{if $Q$ is of type 3},
\end{cases}
\end{equation}
and call this sign the \textit{type} of $Q$.
The number $N_Q(\xi)$ does not depend on the characteristic and it is given by 
\begin{equation} \label{NQ+-} 
N_Q(\xi) = %\begin{cases} 
q^{m-1} + \varepsilon_Q \, \nu(\xi) \, q^{m-\frac{r}{2}-1} 
\end{equation}
where $\nu(0)=q-1$ and $\nu(z)=1$ if $z\in \mathbb{F}_{q}^*$ (see \cite{LN}). From the works \cite{K1}, \cite{K2} of Klapper we also know the distribution of these numbers $N_{Q,\beta}(\xi)$, which are given as follows.

\begin{lem}\label{Result Kl}
Let $Q$ be a quadratic form of $m$ variables over $\mathbb{F}_{q}$ of even rank $r$. Then, for all $\xi \in \mathbb{F}_{q}$, there are $q^m-q^r$ elements $\beta \in \ff_{q^m}$ such that $N_{Q,\beta}(\xi)=q^{m-1}$ and $q^{r-1} + \varepsilon_Q \, \nu(c) \, q^{\frac{r}{2}-1}$ elements $\beta \in \ff_{q^m}$ such that $N_{Q,\beta}(\xi) = q^{m-1} + \varepsilon_Q \, \nu(\xi+c) \, q^{m-\frac{r}{2}-1}$, where $c$ runs on $\mathbb{F}_{q}$.
\end{lem}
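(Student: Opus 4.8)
The plan is to split the elements $\beta \in \ff_{q^m}$ into two classes according to how the linear part $\tr_{q^m/q}(\beta x)$ behaves on the space $V = \{y \in \ff_{q^m} : Q(y) = 0,\ Q(x+y) = Q(x)\ \forall x\}$, for which $|V| = q^{m-r}$. Set $\mathcal{G} = \{\beta : \tr_{q^m/q}(\beta v) = 0 \text{ for all } v \in V\}$ and $\mathcal{B} = \ff_{q^m} \setminus \mathcal{G}$. Since $v \mapsto \tr_{q^m/q}(\beta v)$ is $\ff_q$-linear and the trace form is nondegenerate, $\mathcal{G}$ is the annihilator of $V$, so $|\mathcal{G}| = q^{r}$ and $|\mathcal{B}| = q^m - q^r$. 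The key structural remark, immediate from the definition of $V$, is that $Q$ is constant on every coset of $V$: $Q(x + v) = Q(x)$ for all $x$ and all $v \in V$.

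For $\beta \in \mathcal{B}$ I would show $N_{Q,\beta}(\xi) = q^{m-1}$ for every $\xi$. Choosing $v_0 \in V$ with $\tr_{q^m/q}(\beta v_0) \neq 0$ and using the constancy of $Q$ on cosets of $V$, for any $v \in V$ one has $Q(x+v) + \tr_{q^m/q}(\beta(x+v)) = [Q(x) + \tr_{q^m/q}(\beta x)] + \tr_{q^m/q}(\beta v)$, so translation by $V$ shifts the value of $Q(x)+\tr_{q^m/q}(\beta x)$ by the $\ff_q$-linear functional $v \mapsto \tr_{q^m/q}(\beta v)$. This functional is nonzero, hence surjective onto $\ff_q$ with fibres of size $q^{m-r-1}$; thus on each of the $q^r$ cosets of $V$ the function $Q(x)+\tr_{q^m/q}(\beta x)$ attains every value of $\ff_q$ exactly $q^{m-r-1}$ times, giving $N_{Q,\beta}(\xi) = q^{r}\cdot q^{m-r-1} = q^{m-1}$.

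For $\beta \in \mathcal{G}$ I would complete the square. Let $B(x,y) = Q(x+y)-Q(x)-Q(y)$ be the associated bilinear form, so $Q(x+x_0) = Q(x) + B(x,x_0) + Q(x_0)$. The map $x_0 \mapsto B(\cdot, x_0)$ has kernel $\operatorname{rad}(B)$ and image its annihilator; here the delicate point is that for even rank $r$ one has $V = \operatorname{rad}(B)$ (in odd characteristic because $Q$ vanishes on $\operatorname{rad}(B)$, and in even characteristic because otherwise $Q$ would restrict to a nonzero additive map on $\operatorname{rad}(B)$, forcing $\dim\operatorname{rad}(B) = m-r+1$ and hence making the rank $r-1$ of the alternating form $B$ odd, which is impossible). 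Consequently, for $\beta \in \mathcal{G}$ the functional $\tr_{q^m/q}(\beta \cdot)$ lies in the image of $x_0 \mapsto B(\cdot, x_0)$, so there is $x_0$, unique modulo $V$, with $B(x,x_0) = \tr_{q^m/q}(\beta x)$ for all $x$. Then $Q(x) + \tr_{q^m/q}(\beta x) = Q(x+x_0) - Q(x_0)$, whence $N_{Q,\beta}(\xi) = N_Q(\xi + Q(x_0)) = q^{m-1} + \varepsilon_Q \nu(\xi + Q(x_0))\, q^{m-\frac r2 -1}$ by \eqref{NQ+-}, which is the stated value with $c = Q(x_0)$.

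It then remains to count, for each $c \in \ff_q$, how many $\beta \in \mathcal{G}$ produce this value. The correspondence $\beta \mapsto (x_0 \bmod V)$ is a bijection from $\mathcal{G}$ onto $\ff_{q^m}/V$, and since $Q$ is constant on cosets of $V$ the quantity $c = Q(x_0)$ is well defined on $\ff_{q^m}/V$. Hence the number of $\beta \in \mathcal{G}$ with $Q(x_0) = c$ equals the number of cosets on which $Q$ takes the value $c$, namely $N_Q(c)/|V| = (q^{m-1} + \varepsilon_Q \nu(c)\, q^{m-\frac r2 -1})/q^{m-r} = q^{r-1} + \varepsilon_Q \nu(c)\, q^{\frac r2 -1}$, again by \eqref{NQ+-}. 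Combining the two cases yields exactly the two families asserted. The main obstacle is the characteristic-$2$ analysis in the completing-the-square step, i.e.\ justifying $V = \operatorname{rad}(B)$ for even rank; everything else is a uniform coset-counting argument.
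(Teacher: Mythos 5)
Your proof is correct. Note, however, that the paper does not prove this lemma at all: it is quoted directly from Klapper's papers \cite{K1}, \cite{K2} (``From the works [K1], [K2] of Klapper we also know the distribution of these numbers\dots''), so any proof you give is necessarily a different route from the paper's. What you supply is a clean, self-contained argument: splitting $\beta$ according to whether $\tr_{q^m/q}(\beta\,\cdot)$ annihilates $V$, handling the non-annihilating case by the coset-translation/surjectivity argument (giving the flat value $q^{m-1}$ on $q^m-q^r$ elements), and handling the annihilating case by completing the square via the polar form $B$, which reduces $N_{Q,\beta}(\xi)$ to $N_Q(\xi+Q(x_0))$ and reduces the frequency count to $N_Q(c)/|V|$, both read off from \eqref{NQ+-}. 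The one genuinely delicate step, the identification $V=\operatorname{rad}(B)$, is exactly where the hypothesis of even rank enters in characteristic $2$ (otherwise $Q$ restricts to a nonzero additive map on $\operatorname{rad}(B)$ and the alternating form $B$ would have odd rank), and you handle it correctly; in odd characteristic it follows from $2Q(y)=Q(2y)/2=0$ on the radical. Compared with relying on the external reference, your argument has the advantages of working uniformly in both characteristics, making visible precisely where evenness of $r$ is used, and deriving the whole distribution from the single classical formula \eqref{NQ+-}; all cardinalities check out ($|\mathcal{G}|=q^r$, the bijection $\mathcal{G}\to\ff_{q^m}/V$, and the total count $q^m$).
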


Given a quadratic form $Q$, we can define the exponential sums
\begin{equation}\label{SQb}
S_{Q,b}(\beta) = \sum_{a\in\mathbb{F}_{q}^*} \sum_{x\in\mathbb{F}_{q^m}} \zeta_{p}^{\tr_{q/p} 
\left(a(Q(x)+\tr_{q^{m}/q}(\beta x)+b) \right)}
\end{equation}
where $\zeta_p = e^{\frac{2\pi i}{p}}$ and put $S_{Q}(\beta) = S_{Q,0}(\beta)$. 
We now give the values of $S_{Q,b}(\beta)$ and their distributions.

\begin{lem} \label{SQ}
Let $Q(x)$ be a quadratic form over $\mathbb{F}_{q}$ of even rank $r$. Then, 
\begin{align*} 
S_{Q}(\beta) & = \begin{cases}
	0      														& \qquad q^m-q^r \text{ times}, \\[1mm]
	\varepsilon(q-1)q^{m-\frac{r}{2}} & \qquad q^{r-1} + \varepsilon(q-1)q^{\frac{r}{2}-1}  \text{ times}, \\[1mm]
	-\varepsilon q^{m-\frac{r}{2}} 		& \qquad (q^{r-1}-\varepsilon q^{\frac{r}{2}-1})(q-1) \text{ times}.
	\end{cases} \\[1.25mm]
S_{Q,b}(\beta) & =
	\begin{cases}
	0       													& \qquad q^m-q^r  \text{ times}, \\[1mm]
	\varepsilon(q-1)q^{m-\frac{r}{2}} & \qquad q^{r-1}-\varepsilon q^{\frac{r}{2}-1}  \text{ times}, \\[1mm]
	-\varepsilon q^{m-\frac{r}{2}} 		& \qquad q^{r}-q^{r-1}+\varepsilon q^{\frac{r}{2}-1}  \text{ times}.
	\end{cases} 
\end{align*}
\end{lem}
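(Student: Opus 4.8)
The plan is to collapse the double sum $S_{Q,b}(\beta)$ to a single point count and then transport Klapper's Lemma~\ref{Result Kl} through that reduction. First I would carry out the inner sum over $a$. Writing $\psi(t)=\zeta_p^{\tr_{q/p}(t)}$ for the canonical additive character of $\ff_q$ and grouping the $x$-summation in \eqref{SQb} by the value $\xi=Q(x)+\tr_{q^m/q}(\beta x)\in\ff_q$, the definition \eqref{NQ} gives
\begin{equation*}
S_{Q,b}(\beta)=\sum_{\xi\in\ff_q}N_{Q,\beta}(\xi)\sum_{a\in\ff_q^*}\psi\bigl(a(\xi+b)\bigr).
\end{equation*}
By orthogonality the inner sum equals $q-1$ when $\xi=-b$ and $-1$ otherwise, so separating the term $\xi=-b$ and using $\sum_{\xi}N_{Q,\beta}(\xi)=q^m$ yields the clean identity
\begin{equation}\label{keySQ}
S_{Q,b}(\beta)=q\,N_{Q,\beta}(-b)-q^m ,
\end{equation}
and in particular $S_Q(\beta)=q\,N_{Q,\beta}(0)-q^m$. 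This reduces the whole problem to the distribution of $N_{Q,\beta}(-b)$ as $\beta$ runs over $\ff_{q^m}$, which is precisely the content of Lemma~\ref{Result Kl} applied with $\xi=-b$.

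Next I would substitute Lemma~\ref{Result Kl} into \eqref{keySQ}. The $q^m-q^r$ values of $\beta$ with $N_{Q,\beta}(-b)=q^{m-1}$ give $S_{Q,b}(\beta)=0$, the first line in each distribution. For the rest, Lemma~\ref{Result Kl} produces, for each $c\in\ff_q$, exactly $q^{r-1}+\varepsilon\,\nu(c)\,q^{r/2-1}$ elements $\beta$ with $N_{Q,\beta}(-b)=q^{m-1}+\varepsilon\,\nu(c-b)\,q^{m-r/2-1}$, so by \eqref{keySQ} these $\beta$ contribute the value $S_{Q,b}(\beta)=\varepsilon\,\nu(c-b)\,q^{m-r/2}$. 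The crucial bookkeeping point is that the \emph{value} is controlled by $\nu(c-b)$ while its \emph{multiplicity} is controlled by $\nu(c)$: the single index $c=b$ (where $\nu(c-b)=q-1$) yields the value $\varepsilon(q-1)q^{m-r/2}$, whereas the $q-1$ indices $c\ne b$ (where $\nu(c-b)=-1$) all yield $-\varepsilon\,q^{m-r/2}$.

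Finally I would total the multiplicities, which is where the split $b=0$ versus $b\ne0$ enters. The value $\varepsilon(q-1)q^{m-r/2}$ occurs $q^{r-1}+\varepsilon\,\nu(b)\,q^{r/2-1}$ times; since $\nu(0)=q-1$ and $\nu(b)=-1$ for $b\ne0$, this is $q^{r-1}+\varepsilon(q-1)q^{r/2-1}$ for $S_Q$ and $q^{r-1}-\varepsilon\,q^{r/2-1}$ for $S_{Q,b}$ with $b\ne0$, matching the two displays. For the value $-\varepsilon\,q^{m-r/2}$ I would sum the frequencies over $c\ne b$; using that the $q$ frequencies $q^{r-1}+\varepsilon\,\nu(c)\,q^{r/2-1}$ add up to $q^r$ (equivalently $\sum_{c\in\ff_q}\nu(c)=0$), the total is $q^r-\bigl(q^{r-1}+\varepsilon\,\nu(b)\,q^{r/2-1}\bigr)$, giving $(q-1)(q^{r-1}-\varepsilon q^{r/2-1})$ when $b=0$ and $q^r-q^{r-1}+\varepsilon\,q^{r/2-1}$ when $b\ne0$. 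A check that the three multiplicities sum to $q^m$ closes the argument. There is no real obstacle beyond this bookkeeping, since all of the analytic difficulty is already absorbed into Lemma~\ref{Result Kl}; the only care needed is to keep the two distinct roles of $\nu$ separate and to treat $b=0$ and $b\ne0$ as the two cases producing the two distributions.
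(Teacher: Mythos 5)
Your proposal is correct and follows essentially the same route as the paper: both reduce the double sum to the identity $S_{Q,b}(\beta)=q\,N_{Q,\beta}(-b)-q^m$ (you via orthogonality over $\ff_q^*$ after grouping by $\xi$, the paper by adjoining the $a=0$ term) and then invoke Lemma~\ref{Result Kl}. The only difference is that you carry out explicitly the final bookkeeping over $c$ that the paper leaves as ``the result now follows,'' correctly using $\nu(z)=-1$ for $z\in\ff_q^*$ (the convention actually needed for the stated frequencies).
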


\begin{proof}
Notice that
$$S_{Q}(\beta) = \sum_{a\in\mathbb{F}_{q}^*}\sum_{x\in\mathbb{F}_{q^m}}\zeta_{p}^{\tr_{q/p}(a(Q(x)+\tr_{q^{m}/q}(\beta x)))}
=  \sum_{x\in\mathbb{F}_{q^m}}\sum_{a\in\mathbb{F}_{q}}\zeta_{p}^{\tr_{q/p}(a(Q(x)+\tr_{q^{m}/q}(\beta x)))} -q^m.$$
Hence, $S_{Q}(\beta) = q N_{Q,\beta}(0)-q^m$.
Similarly, we get $S_{Q,b}(\beta) = q N_{Q,\beta}(-b)-q^m$. The result now follows from Lemma \ref{Result Kl}. 
\end{proof}

\subsubsection*{The quadratic form $\tr_{q^m/q}(\gamma x^{q^\ell+1})$}
A whole family of quadratic forms over $\ff_q$ in $m$ variables are given by  
\begin{equation} \label{QRx}
Q_R(x) = \tr_{q^m/q}(xR(x))
\end{equation} 
where $R(x)$ is a $q$-linearized polynomial over $\ff_q$. 
We are interested in the simplest case, when $R(x)$ is the monomial $R_{\gamma, \ell}(x) = \gamma x^{q^\ell}$ with 
$\ell \in \mathbb{N}$, $\gamma \in \ff_{q^m}^*$, i.e.\@
\begin{equation} \label{Qell}
Q_{\gamma, \ell}(x) = \tr_{q^m/q}(\gamma x^{q^\ell+1}). 
\end{equation}
The next theorems, due to Klapper, give the distribution of ranks and types of the family of quadratics forms 
$\{ Q_{\gamma, \ell}(x) = \tr_{q^m/q}(\gamma x^{q^\ell+1}) : \gamma \in \mathbb{F}_{q^m}, \ell \in \mathbb{N} \}$.

\smallskip 
For integers $m, \ell$ we will use the following notations 
$$m_{\ell} = \tfrac{m}{(m,\ell)} \qquad \text{and} \qquad \varepsilon_{\ell}=(-1)^{\frac 12 m_{\ell}}$$
and denote the set of $(q^\ell+1)$-th powers in $\ff_{q^m}$ by 
\begin{equation} \label{Sqml}
S_{q,m}(\ell) = \{ x^{q^\ell+1} : x \in \ff_{q^m}^* \}.
\end{equation}

\begin{thm}[even characteristic (\cite{K1})] \label{Thmpar}
Let $q$ be a power of $2$ and $m,\ell \in \mathbb{N}$ such that $m_\ell$ is even. Then $Q_{\gamma,\ell}$ is of even rank and we have:
\begin{enumerate}[(a)]
\item If $\varepsilon_\ell = \pm 1$ and $\gamma \in S_{q,m}(\ell)$ then $Q_{\gamma, \ell}$ is of type $\mp 1$ and has rank
 $m-2{(m,\ell)}$.
		
\item If $\varepsilon_\ell = \pm 1$ and $\gamma \notin S_{q,m}(\ell)$ then $Q_{\gamma,\ell}$ is of type $\pm 1$ and has rank $m$.
\end{enumerate}
\end{thm}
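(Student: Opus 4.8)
The plan is to obtain the radical of $Q_{\gamma,\ell}$ from its polarization, count that radical using the cyclic structure of $\ff_{q^m}^*$, and only then fix the type through a Gauss sum. Since $p=2$, expanding $(x+y)^{q^\ell+1}$ gives
\[ Q_{\gamma,\ell}(x+y)=Q_{\gamma,\ell}(x)+Q_{\gamma,\ell}(y)+\tr_{q^m/q}\!\big(\gamma(x^{q^\ell}y+xy^{q^\ell})\big), \]
so the associated alternating bilinear form is $B(x,y)=\tr_{q^m/q}(\gamma(x^{q^\ell}y+xy^{q^\ell}))$. Applying $\tr_{q^m/q}(w)=\tr_{q^m/q}(w^{q^{m-\ell}})$ to the second summand, I would write $B(x,y)=\tr_{q^m/q}(y\,L(x))$ with the $q$-linearized polynomial $L(x)=\gamma x^{q^\ell}+\gamma^{q^{m-\ell}}x^{q^{m-\ell}}$. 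Non-degeneracy of the trace pairing then identifies the radical $\mathrm{rad}(B)=\{x:B(x,y)=0\ \forall y\}$ with $\ker L$; and since $x\mapsto x^{q^\ell}$ is bijective on $\ff_{q^m}$, raising $L$ to the $q^\ell$-th power (using $x^{q^m}=x$ there) yields $\ker L=\{x\in\ff_{q^m}:x^{q^{2\ell}}=\gamma^{1-q^\ell}x\}$.

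Next I would count this kernel, which is an $\ff_q$-subspace. For $x\neq0$ the relation reads $x^{q^{2\ell}-1}=\gamma^{1-q^\ell}$. The hypothesis that $m_\ell$ is even produces the exponent identities
\[ (q^{2\ell}-1,\,q^m-1)=q^{2(m,\ell)}-1 \qquad\text{and}\qquad (q^\ell+1,\,q^m-1)=q^{(m,\ell)}+1, \]
the first because $(2\ell,m)=2(m,\ell)$, the second by the usual $\gcd$ manipulation using that $\ell/(m,\ell)$ is odd. Writing $\gamma=g^a$ for a generator $g$ of $\ff_{q^m}^*$, solvability of $x^{q^{2\ell}-1}=\gamma^{1-q^\ell}$ amounts to $(q^{2(m,\ell)}-1)\mid a(q^\ell-1)$; factoring $q^{2(m,\ell)}-1=(q^{(m,\ell)}-1)(q^{(m,\ell)}+1)$ and checking the coprimalities reduces this to $(q^{(m,\ell)}+1)\mid a$, that is, to $\gamma\in S_{q,m}(\ell)$. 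When solvable the kernel has $q^{2(m,\ell)}$ elements, otherwise only $x=0$; hence $\dim_{\ff_q}\mathrm{rad}(B)$ equals $2(m,\ell)$ if $\gamma\in S_{q,m}(\ell)$ and $0$ otherwise.

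To pass from $\mathrm{rad}(B)$ to the radical $V$ of $Q_{\gamma,\ell}$ I would verify that $Q_{\gamma,\ell}$ vanishes on $\ker L$. For $x\in\ker L$ we have $x^{q^{2\ell}}=\gamma^{1-q^\ell}x$, so $z:=\gamma x^{q^\ell+1}$ satisfies $z^{q^\ell}=\gamma^{q^\ell}x^{q^{2\ell}}x^{q^\ell}=\gamma^{q^\ell}\gamma^{1-q^\ell}x^{q^\ell+1}=z$, whence $z\in\ff_{q^{(m,\ell)}}$ and therefore
\[ Q_{\gamma,\ell}(x)=\tr_{q^m/q}(z)=\tr_{q^{(m,\ell)}/q}\big(\tr_{q^m/q^{(m,\ell)}}(z)\big)=\tr_{q^{(m,\ell)}/q}(m_\ell\,z)=0, \]
the last equalities because $\tr_{q^m/q^{(m,\ell)}}(z)=m_\ell\,z=0$ as $m_\ell$ is even and $p=2$. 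Thus $V=\ker L$, the rank is even and equals $m-\dim_{\ff_q}\ker L$, giving $r=m-2(m,\ell)$ when $\gamma\in S_{q,m}(\ell)$ and $r=m$ when $\gamma\notin S_{q,m}(\ell)$, matching both stated rank values.

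Finally the type, which I expect to be the main obstacle. With $r$ fixed, $\varepsilon_Q$ in \eqref{EQ} is pinned down by the sign of the Weil sum $S_{Q_{\gamma,\ell}}(0)=\sum_{a\in\ff_q^*}\sum_{x\in\ff_{q^m}}\zeta_p^{\tr_{q/p}(a\,\tr_{q^m/q}(\gamma x^{q^\ell+1}))}$, which by Lemma~\ref{SQ} equals $\varepsilon_Q(q-1)q^{m-\frac r2}$. I would evaluate the inner sum $\sum_x\zeta_p^{\tr_{q^m/p}(a\gamma x^{q^\ell+1})}$ by descending to the fixed field of the Frobenius $x\mapsto x^{q^{2\ell}}$, namely $\ff_{q^{2(m,\ell)}}$, on which $x^{q^\ell+1}$ becomes a norm-type form and the sum collapses to a classical quadratic Gauss sum; its sign is controlled by $m_\ell/2 \bmod 2$, producing the factor $\varepsilon_\ell=(-1)^{m_\ell/2}$, toggled according to whether $\gamma$ lies in $S_{q,m}(\ell)$ (note $\ff_q^*\subseteq S_{q,m}(\ell)$, so each $a\gamma$ lies in the coset of $\gamma$). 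Summing over $a\in\ff_q^*$ then yields $\varepsilon_Q=\varepsilon_\ell$ for $\gamma\notin S_{q,m}(\ell)$ and $\varepsilon_Q=-\varepsilon_\ell$ for $\gamma\in S_{q,m}(\ell)$, exactly the sign pattern of (a) and (b). The delicate points are the correct normalization of the quadratic Gauss sum over the subfield and the careful bookkeeping of the coset-dependent sign flip, which is the crux of Klapper's computation in \cite{K1}.
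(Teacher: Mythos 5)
First, note that the paper does not prove Theorem~\ref{Thmpar} at all: it is quoted from Klapper \cite{K1} as a known input, so there is no internal proof to compare against. Judged on its own terms, your proposal splits into two halves of very different quality. The rank half is complete and correct: the polarization $B(x,y)=\tr_{q^m/q}(yL(x))$ with $L(x)=\gamma x^{q^\ell}+\gamma^{q^{m-\ell}}x^{q^{m-\ell}}$, the identification of $\ker L$ with the solutions of $x^{q^{2\ell}}=\gamma^{1-q^\ell}x$, the gcd computations $(2\ell,m)=2(m,\ell)$ and $(q^{(m,\ell)}+1,q^\ell-1)=1$ (both using that $\ell/(m,\ell)$ is odd and $q$ is even), the reduction of solvability to $\gamma\in S_{q,m}(\ell)$, and the verification that $Q_{\gamma,\ell}$ vanishes on $\ker L$ because $\gamma x^{q^\ell+1}\in\ff_{q^{(m,\ell)}}$ there and $m_\ell$ is even in characteristic $2$ --- all of this checks out and correctly yields $V=\ker L$ and ranks $m-2(m,\ell)$ and $m$.

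The type half, however, has a genuine gap. Reducing $\varepsilon_Q$ to the sign of $S_{Q_{\gamma,\ell}}(0)$ via Lemma~\ref{SQ} is legitimate, but you never actually evaluate that sign. The phrase ``descending to $\ff_{q^{2(m,\ell)}}$, on which $x^{q^\ell+1}$ becomes a norm-type form and the sum collapses to a classical quadratic Gauss sum'' does not describe a working computation: in characteristic $2$ there is no odd-characteristic quadratic Gauss sum to invoke, the map $x\mapsto x^{q^\ell+1}$ is not a norm onto a subfield in any way that linearizes the exponential sum, and no actual descent mechanism (a fibering of $\ff_{q^m}$ over the subfield compatible with the character) is given. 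Determining whether $\sum_{x\in\ff_{q^m}}(-1)^{\tr_{q^m/2}(a\gamma x^{q^\ell+1})}$ equals $+q^{m-\frac r2}$ or $-q^{m-\frac r2}$ is precisely the content of parts (a) and (b) as regards type; asserting that ``its sign is controlled by $m_\ell/2\bmod 2$'' is a restatement of the conclusion, not a derivation. Your own closing sentence concedes that this is ``the crux of Klapper's computation,'' which is accurate: the sign requires a separate argument (e.g.\ an induction on $m_\ell$, a Stickelberger/Gauss-sum argument over $\ff_2$, or an explicit diagonalization of $Q_{\gamma,\ell}$ into the standard type-$1$/type-$3$ normal forms), none of which is supplied. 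As written, the proposal proves the rank assertions but only conjectures the type assertions.
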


For $q$ odd, consider the following sets of integers 
\begin{equation} \label{Xqml}
X_{q,m}(\ell) = \{ 0\le t\le N :  t \equiv 0 \, (L)\} 
\quad \text{and} \quad  
Y_{q,m}(\ell) = \{0\le t\le N : t \equiv \tfrac L2 \, (L) \}
\end{equation}
where $N=q^m-1$ and $L=q^{(m,\ell)}+1$.

\begin{thm}[odd characteristic (\cite{K2})] \label{Thmimpar}
Let $q$ be a power of an odd prime $p$ and let $m, \ell$ be non negative integers. Put $\gamma = \alpha^{t}$ with $\alpha$ a primitive element in $\ff_{q^m}$. Then, we have:
	\begin{enumerate}[(a)]
		\item If $\varepsilon_\ell = 1$ and $t \in X_{q,m}(\ell)$ then $Q_{\gamma,\ell}$ is of type $-1$ and has rank $m-2(m,\ell)$.
		
		\item If $\varepsilon_\ell = 1$ and $t\notin X_{q,m}(\ell)$ then $Q_{\gamma,\ell}$ is of type $1$ and has rank $m$.
		
		\item If $m_\ell$ is even, $\varepsilon_\ell = - 1$ and $t \in Y_{q,m}(\ell)$ 
		then $Q_{\gamma,\ell}$ is of type $1$ and has rank $m-2{(m,\ell)}$.
		
		\item If $m_\ell$ is even, $\varepsilon_\ell = -1$ and $t\notin Y_{q,m}(\ell)$ 
		then $Q_{\gamma,\ell}$ is of type $-1$ and has rank $m$.
\end{enumerate}
\end{thm}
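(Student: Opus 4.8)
The plan is to prove Theorem~\ref{Thmimpar} by passing to the symmetric bilinear form attached to $Q_{\gamma,\ell}$, reading the rank off the dimension of its radical, and then fixing the type from the sign of the associated Weil sum. Set $d=(m,\ell)$ and form the polarization $B(x,y)=Q_{\gamma,\ell}(x+y)-Q_{\gamma,\ell}(x)-Q_{\gamma,\ell}(y)$. Expanding $(x+y)^{q^\ell+1}$ and using the Frobenius invariance $\tr_{q^m/q}(z^{q^j})=\tr_{q^m/q}(z)$ to move every factor onto $x$, one obtains $B(x,y)=\tr_{q^m/q}(x\,(\gamma y^{q^\ell}+(\gamma y)^{q^{m-\ell}}))$. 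Since the trace pairing is nondegenerate, the radical $V$ that defines the rank $r$ (via $|V|=q^{m-r}$) is exactly the kernel of the $q$-linearized map $y\mapsto \gamma y^{q^\ell}+\gamma^{q^{m-\ell}}y^{q^{m-\ell}}$. Raising this to the $q^\ell$-th power, a bijection of $\ff_{q^m}$, turns the defining equation into $\gamma^{q^\ell}y^{q^{2\ell}}+\gamma y=0$, that is, $y=0$ or $y^{q^{2\ell}-1}=-\gamma^{1-q^\ell}$.

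In all four cases $m_\ell$ is even, since $\varepsilon_\ell=(-1)^{m_\ell/2}=\pm1$ forces $m_\ell/2\in\z$; hence $(2\ell,m)=2d$ and $2d\mid m$. Therefore $y\mapsto y^{q^{2\ell}-1}$ is an endomorphism of $\ff_{q^m}^*$ with kernel of order $q^{2d}-1$ and image the subgroup $\langle\alpha^{q^{2d}-1}\rangle$, so the linearized equation has either the single solution $y=0$ (giving $r=m$) or exactly $q^{2d}$ solutions (giving $r=m-2d$), the latter occurring precisely when $-\gamma^{1-q^\ell}$ lies in that image. Writing $-1=\alpha^{N/2}$ and $\gamma=\alpha^{t}$ with $N=q^m-1$, this membership reads $(q^{2d}-1)\mid(\tfrac{N}{2}+t(1-q^\ell))$. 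I would then simplify: as $\ell/d$ is coprime to the even number $m_\ell$ it is odd, so $q^{\ell}\equiv q^{d}\pmod{q^{2d}-1}$, and cancelling the common factor $q^{d}-1$ collapses the condition to $t\equiv A\pmod{L}$, where $L=q^{d}+1$ and $A=\tfrac12\sum_{j=0}^{m_\ell-1}q^{jd}$. Using $q^{jd}\equiv(-1)^{j}\pmod L$ a short computation gives $A\equiv \tfrac{L}{2}\cdot\tfrac{m_\ell}{2}\pmod L$, whence $A\equiv0$ when $\varepsilon_\ell=1$ and $A\equiv L/2$ when $\varepsilon_\ell=-1$. This is precisely the dichotomy $t\in X_{q,m}(\ell)$ versus $t\in Y_{q,m}(\ell)$ of \eqref{Xqml}, and it reproduces the ranks $m-2d$ in (a),(c) and $m$ in (b),(d).

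It remains to fix the type $\varepsilon_Q$. By \eqref{NQ+-} (equivalently by Lemma~\ref{SQ}) the sign $\varepsilon_Q$ is the sign of the Weil sum $T(\gamma)=\sum_{x\in\ff_{q^m}}\psi(\gamma x^{q^\ell+1})=\varepsilon_Q\,q^{m-r/2}$, where $\psi$ is the canonical additive character of $\ff_{q^m}$. Writing $T(\gamma)=\sum_{v}\#\{x:x^{q^\ell+1}=v\}\,\psi(\gamma v)$ and expanding the fibre counts in multiplicative characters of order dividing $\gcd(q^\ell+1,q^m-1)=L$, the sum becomes a combination of Gauss sums $G(\chi)=\sum_x\chi(x)\psi(x)$ with $\chi^{L}=1$. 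Since $q^{d}\equiv-1\pmod L$, this is the semiprimitive case, in which each such Gauss sum is an explicit real multiple of a power of $q$; assembling them gives $T(\gamma)=\varepsilon_\ell\,q^{m/2}$ in the full-rank cases and $T(\gamma)=-\varepsilon_\ell\,q^{m/2+d}$ in the rank-reduced cases, i.e.\ $\varepsilon_Q=\varepsilon_\ell$ when $r=m$ and $\varepsilon_Q=-\varepsilon_\ell$ when $r=m-2d$, which are exactly the signs in (a)--(d). To shorten the casework I would first note that if $\gamma\in S_{q,m}(\ell)$, writing $\gamma=\delta^{q^\ell+1}$ and substituting $x\mapsto\delta^{-1}x$ shows $Q_{\gamma,\ell}$ is equivalent to $Q_{1,\ell}$, so only one representative per coset of $S_{q,m}(\ell)$ in \eqref{Sqml} must be treated.

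I expect the genuine obstacle to be this last step. The rank follows from an essentially mechanical kernel computation and a congruence manipulation, but pinning down the \emph{sign} $\varepsilon_Q$ requires the explicit semiprimitive Gauss-sum evaluation (equivalently, tracking the discriminant of $B$ on a complement of $V$ through the quadratic character $\eta$ of $\ff_q$), and it is there that the parity $(-1)^{m_\ell/2}=\varepsilon_\ell$ and the quadratic residuosity governing the $X_{q,m}(\ell)$ versus $Y_{q,m}(\ell)$ split must be carefully reconciled.
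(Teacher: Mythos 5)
The paper does not prove this statement at all: Theorem~\ref{Thmimpar} is imported verbatim from Klapper \cite{K2}, so there is no internal argument to compare yours against. Judged on its own terms, the rank half of your proposal is correct and essentially complete. The polarization identity $B(x,y)=\tr_{q^m/q}\bigl(x(\gamma y^{q^\ell}+(\gamma y)^{q^{m-\ell}})\bigr)$ is right, the radical is indeed the kernel of that $q$-linearized map, and your congruence bookkeeping checks out: with $d=(m,\ell)$ and $m_\ell$ even one has $(2\ell,m)=2d$, the solvability condition $(q^{2d}-1)\mid \bigl(\tfrac{N}{2}+t(1-q^\ell)\bigr)$ reduces (since $\ell/d$ is odd) to $t\equiv A\pmod{q^d+1}$ with $A=\tfrac{L}{2}\sum_{k=0}^{m_\ell/2-1}q^{2kd}\equiv \tfrac{L}{2}\cdot\tfrac{m_\ell}{2}\pmod{L}$, which is exactly the $X_{q,m}(\ell)$ versus $Y_{q,m}(\ell)$ dichotomy and yields ranks $m-2d$ and $m$ in the right cases.

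The genuine gap is where you yourself locate it: the type. Everything in the last paragraph from ``each such Gauss sum is an explicit real multiple of a power of $q$; assembling them gives $T(\gamma)=\varepsilon_\ell q^{m/2}$ \dots'' onward is an assertion of the answer, not a derivation. The semiprimitive (Baumert--McEliece / Stickelberger) evaluation gives $G(\chi)=\pm q^{m/2}$ with a sign that itself depends on a case analysis (parity of $(q^d+1)/\mathrm{ord}(\chi)$, of $m/(2d)$, and on $p$), and the sum $T(\gamma)=1+\sum_{\chi^L=1,\ \chi\neq 1}\bar{\chi}(\gamma)G(\chi)$ must then be collapsed coset by coset; reconciling the resulting sign with $\varepsilon_\ell=(-1)^{m_\ell/2}$ and with the $X$ versus $Y$ split is precisely the content of Klapper's theorem, and it is the step most prone to a sign error. (Your reduction to one representative per coset of $S_{q,m}(\ell)$ is valid and does shorten the casework, but does not remove it.) So the proposal is a sound skeleton with a correct and self-contained rank computation, but the type determination --- half of the statement --- is not actually proved.
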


We will need the following result whose proof is elementary.
\begin{lem} \label{mcd}
Let $q$ be a prime power and $m, \ell$ integers. If $m_\ell$ is even then $(q^m-1, q^\ell+1) = q^{(m,\ell)}+1$.
\end{lem}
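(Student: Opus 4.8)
The plan is to set $d = (m,\ell)$ and write $m = d m'$, $\ell = d\ell'$ with $(m',\ell')=1$, so that $m' = m_\ell$ is even by hypothesis; consequently $\ell'$ must be odd, since any common factor $2$ of $m'$ and $\ell'$ would violate coprimality. Writing $g = (q^m-1,\,q^\ell+1)$, I would prove the two divisibilities $q^d+1 \mid g$ and $g \mid q^d+1$ separately. Throughout I will use the elementary congruence $q^d \equiv -1 \pmod{q^d+1}$ together with the standard identity $(q^a-1,\,q^b-1) = q^{(a,b)}-1$.

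For the first divisibility I would observe that $q^\ell+1 = (q^d)^{\ell'}+1 \equiv (-1)^{\ell'}+1 = 0 \pmod{q^d+1}$ because $\ell'$ is odd, so $q^d+1 \mid q^\ell+1$; and similarly $q^m-1 = (q^d)^{m'}-1 \equiv (-1)^{m'}-1 = 0 \pmod{q^d+1}$ because $m'$ is even, so $q^d+1 \mid q^m-1$. Hence $q^d+1$ divides both arguments of the gcd, giving $q^d+1 \mid g$.

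For the reverse divisibility I would first confine $g$ to a cyclotomic-type factor: since $g \mid q^\ell+1 \mid q^{2\ell}-1$ and $g \mid q^m-1$, we get $g \mid (q^{2\ell}-1,\,q^m-1) = q^{(2\ell,m)}-1$. A short computation gives $(2\ell,m) = d\,(2\ell',m') = 2d$, using $(\ell',m')=1$ and $m'$ even; thus $g \mid q^{2d}-1 = (q^d-1)(q^d+1)$. Writing $g = (q^d+1)h$ (legitimate by the first part) and dividing this relation by $q^d+1$ yields $h \mid q^d-1$, while $h \mid g \mid q^\ell+1$; therefore $h \mid (q^d-1,\,q^\ell+1)$, and since $q^d-1 \mid q^\ell-1$ this last gcd divides $(q^\ell-1,\,q^\ell+1) \mid 2$, so $h \in \{1,2\}$.

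The main obstacle is then to rule out $h=2$, which is where the two characteristics diverge. If $q$ is even, $h \mid q^d-1$ is odd, forcing $h=1$. If $q$ is odd, I would compare $2$-adic valuations: expanding $q^\ell+1 = (q^d+1)\sum_{j=0}^{\ell'-1}(-1)^j (q^d)^{\ell'-1-j}$ and reducing the sum modulo $2$ shows it is congruent to $\ell'$, hence odd; therefore $q^\ell+1$ and $q^d+1$ have the same $2$-adic valuation, which prevents $2(q^d+1) \mid q^\ell+1$ and again forces $h=1$. In both cases $g = q^d+1 = q^{(m,\ell)}+1$, as claimed.
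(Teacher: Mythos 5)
Your proof is correct, and in fact there is nothing in the paper to compare it against: the authors state only that ``the proof is elementary'' and omit it entirely, so your argument fills a genuine gap in the exposition. Every step checks out: the reduction to $m=dm'$, $\ell=d\ell'$ with $m'$ even and hence $\ell'$ odd; the divisibility $q^d+1\mid g$ via $x+1\mid x^{\ell'}+1$ for odd $\ell'$ and $2d\mid m$; the bound $g\mid q^{(2\ell,m)}-1=q^{2d}-1$ using $(q^a-1,q^b-1)=q^{(a,b)}-1$ and $(2\ell',m')=2$; the conclusion $h\mid(q^\ell-1,q^\ell+1)\mid 2$; and the $2$-adic argument ruling out $h=2$ in odd characteristic, where the cofactor $\sum_{j=0}^{\ell'-1}(-1)^j(q^d)^{\ell'-1-j}\equiv\ell'\equiv 1\pmod 2$ indeed forces $v_2(q^\ell+1)=v_2(q^d+1)$. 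This is a clean and complete elementary proof of Lemma~\ref{mcd}.
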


\begin{lem}\label{LemaEmes}
Let  $M = \# S_{q,m}(\ell)$, $M_1 =\#X_{q,m}(\ell)$ and $M_2 =\#Y_{q,m}(\ell)$ and put 
$M' = q^m-1-M$, $M_1' =q^m-1-M_1$ and $M_2' =q^m-1-M_2$.
If $m_\ell$ is even then 
\begin{equation} \label{fla M} 
M = M_1 = M_2 = \frac{q^m-1}{q^{(m,\ell)}+1} 
\qquad \text{and} \qquad 
 M' = M'_1 = M'_2 = q^{(m,\ell)}\frac{q^m-1}{q^{(m,\ell)}+1}. 
\end{equation}
\end{lem}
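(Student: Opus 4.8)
The plan is to compute the three cardinalities $M$, $M_1$, $M_2$ separately and directly, the single nontrivial input being the divisibility $L \mid N$, which I would extract from Lemma~\ref{mcd}. Since $m_\ell$ is even, Lemma~\ref{mcd} gives $\gcd(q^m-1, q^\ell+1) = q^{(m,\ell)}+1 = L$, and in particular $L$ divides $N = q^m-1$. Writing $N = ML$ with $M = (q^m-1)/(q^{(m,\ell)}+1)$, this is the common value we must match in all three counts, after which the primed quantities will follow by complementation.

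For $M = \#S_{q,m}(\ell)$, I would regard the map $\varphi : \ff_{q^m}^* \to \ff_{q^m}^*$, $\varphi(x) = x^{q^\ell+1}$, as an endomorphism of the cyclic group $\ff_{q^m}^*$ of order $N$. The image of the $k$-th power map on a cyclic group of order $N$ has size $N/\gcd(N,k)$; taking $k = q^\ell+1$ and using $\gcd(N, q^\ell+1) = L$ from Lemma~\ref{mcd} yields $\#S_{q,m}(\ell) = \#\operatorname{Im}\varphi = N/L = M$, as claimed.

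For $M_1$ and $M_2$ I would invoke equidistribution of residues. Since $\gamma = \alpha^t$ with $\alpha$ primitive of order $N$, the exponent $t$ ranges over a complete set of $N$ consecutive residues, one for each $\gamma \in \ff_{q^m}^*$ (note that $t=0$ and $t=N$ give the same $\gamma$, so there is no double count). When $L \mid N$, these $N$ integers split evenly among the $L$ residue classes modulo $L$, so each class contains exactly $N/L = M$ of them. Applying this to the class $0 \pmod L$ gives $M_1 = M$, and to the class $\tfrac{L}{2} \pmod L$ gives $M_2 = M$; here one checks that $\tfrac{L}{2}$ is an integer precisely when $L$ is even, that is when $q$ is odd, which is exactly the setting in which the set $Y_{q,m}(\ell)$ arises (Theorem~\ref{Thmimpar}(c)--(d)).

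Finally, the primed quantities follow immediately: $M' = N - M = N - N/L = (L-1)N/L$, and since $L - 1 = q^{(m,\ell)}$ this equals $q^{(m,\ell)}(q^m-1)/(q^{(m,\ell)}+1)$; the identical computation gives $M_1' = M_2' = N - M$. I expect no genuine obstacle here, in keeping with the lemma being billed as elementary; the only point deserving care is the bookkeeping of the range of $t$, so that one obtains exactly $N/L$ rather than $N/L+1$ in the residue counts. This is why I emphasize that the $N$ admissible values of $t$ form a complete and non-redundant residue system for the exponents of $\alpha$.
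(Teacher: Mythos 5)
Your proposal is correct and follows essentially the same route as the paper: Lemma~\ref{mcd} gives $\gcd(q^m-1,q^\ell+1)=q^{(m,\ell)}+1$, the count $M$ comes from the size of the cyclic subgroup $\langle \alpha^{q^\ell+1}\rangle$ (equivalently, the image of the power map), $M_1=M_2$ follow from equidistribution of residue classes modulo a divisor of $q^m-1$, and the primed quantities follow by complementation. Your explicit remark that $t=0$ and $t=N$ give the same $\gamma$ is a welcome point of care that the paper's proof handles only implicitly by counting over $i\in\{1,\dots,N\}$.
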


\begin{proof}
	Let $\alpha$ be a primitive element of $\mathbb{F}_{q^m}$, then $S_{q,m}(\ell) = \langle \alpha^{q^\ell+1} \rangle$, this implies that 
	$M = \frac{q^m-1}{(q^m-1,q^{\ell}+1)} = \frac{q^m-1}{q^{(m,\ell)}+1}$, by Lemma \ref{mcd}. 
	On the other hand, if $k$, $N$, $s_1$, $s_2$ are non-negative integers with $k\mid N$ and 
$0 \le s_1, s_2 \le k-1$, then 
	$$\#\{ i\in\{1,\ldots,N\} : i \equiv s_{1} \text{ mod } k \} = \#\{ i\in\{1,\ldots,N\}: i\equiv s_{2} \text{ mod } k \}  = 
	\tfrac Nk.$$ 
	Therefore, if $q^{(m,\ell)}+1\mid q^{m}-1$, we have that 
	$M_1 = M_2 = \frac{q^m-1}{q^{(m,\ell)}+1}$. 
    Clearly, we obtain that $M'=M'_1=M'_2=q^{(m,\ell)} M$ as we wanted.  
\end{proof}

\section{Weight distribution of cyclic codes defined by trace forms}
Let $\mathcal{L} \subset \mathbb{F}_{q^m}[x]$ be a finite dimensional $\mathbb{F}_{q^m}$-subspace containing  
$q$-linearized polynomials only, i.e. 
$$\mathcal{L} = \langle x^{q^{\ell_1}}, x^{q^{\ell_2}}, \ldots, x^{q^{\ell_s}} \rangle \subset \mathbb{F}_{q^m}[x]$$ 
for some non-negative integers $\ell_1, \ldots,\ell_s$ with %$\ell_i < \frac{m}{2}$ and 
$\ell_i\ne\ell_j$ for $i\ne j$.
Define the $q$-ary code 
\begin{equation}\label{codigoCL}
\CC_{\mathcal{L}} = \Big \{ c_R= \big( \tr_{q^m/q}(xR(x)) \big)_{x\in\mathbb{F}_{q^m}^*}: R\in\mathcal{L} \Big \} \subset 
\mathbb{F}_{q}^n
\end{equation}
with length $n=q^m-1$ and the related codes
\begin{equation}\label{codigoCL0}
\begin{split}
& \CC_{\mathcal{L},0} = \Big\{ c_{R,b} = \big( \tr_{q^m/q} \big( xR(x) ) +b \big)_{x\in\mathbb{F}_{q^m}^*}: R\in\mathcal{L},\text{ } b\in\mathbb{F}_{q} 
\Big\}, \\[1mm]
& \CC_{\mathcal{L},1} = \Big\{ c_{R}(\beta) = \big( \tr_{q^m/q}(xR(x)+\beta x) \big)_{x\in\mathbb{F}_{q^m}^*}: R\in\mathcal{L}, \ \beta\in\mathbb{F}_{q^m} 
\Big\}, \\[1mm]
& \CC_{\mathcal{L},2} = \Big\{ c_{R,b}(\beta) = \big( \tr_{q^m/q}(xR(x)+\beta x)+b \big)_{x\in\mathbb{F}_{q^m}^*}: R\in\mathcal{L}, \ \beta\in\mathbb{F}_{q^m}, 
\ b \in\mathbb{F}_{q} \Big\} .
\end{split}
\end{equation}
Notice that $c_{R,b}=c_R+b$ and $c_{R,b}(\beta) = c_{R}(\beta)+b$; moreover, we have that $c_{R,0}=c_R(0)=c_R$, $c_{R,b}(0)=c_{R,b}$ and 
$c_{R,0}(\beta)=c_R(\beta)$. Then, we have  
$$\CC_{\mathcal{L}} \subset \CC_{\mathcal{L},0}, \, \CC_{\mathcal{L},1} \subset \CC_{\mathcal{L},2}.$$

All of these codes are cyclic since one can check that their words have the form \eqref{delsarte}. In our case, this can be seen  directly. 
If $\alpha$ is a primitive element of $\ff_{q^m}$ then 
$$c_R=(\tr_{q^m/q} (xR(x)))_{x\in \ff_{q^m}^*} = (\tr_{q^m/q} ( \alpha^i R(\alpha^i)))_{i=0}^{q^m-2}.$$ 
The cyclic shift $s(c_R)=(\tr_{q^m/q} ( \alpha^i R(\alpha^i)))_{i=0}^{q^m-2} = c_S$ with $S(x) = \alpha^{-1} R(\alpha^{-1}x) \in \mathcal{L}$, hence $s(c_R)$ is in $\CC_\mathcal{L}$ and the code is cyclic. Similarly for the other codes.

\begin{defi} \label{L even rank}
A family $\mathcal{L} \subset\mathbb{F}_{q^m}[x]$ of $q$-linearized polynomials 
has the \textit{even rank property} or is an  \textit{even rank family} if 
the quadratic form $Q_{R}(x)=\tr_{q^m/q}(xR(x))$ has even rank for any $R\in\mathcal{L}$. 
\end{defi}

Let $\mathcal{L}$ be an even rank family of $q$-linearized polynomials. Then,
$Q_{R}(x) =  \tr_{q/p}(xR(x))$ has constant type in the family; that is $Q_R(x)$ is of type 1 or of type $-$1 for every 
$R\in\mathcal{L}$.
Therefore, given $r$ a non-negative integer, we can define 
\begin{equation}\label{DefKri}
\begin{split} 
K_{r} & =\{R\in\mathcal{L}: Q_{R} \text{ has rank } r \}, \\[1.5mm]
K_{r,1} & =\{R\in\mathcal{L} \smallsetminus \{0\}: Q_{R} \text{ has rank } r \text{ of type 1} \}, \\[1.5mm]
K_{r,2} & =\{R\in\mathcal{L} \smallsetminus \{0\}: Q_{R} \text{ has rank } r \text{ of type 3} \}. 
\end{split}
\end{equation}
We have $K_{0}=\{0\}$ and $K_r = K_{r,1} \sqcup K_{r,2}$ for $r>0$, and we denote their cardinalities by
\begin{equation} \label{M1r}  
M_{r,1} =\#K_{r,1}, \qquad  M_{r,2} =\#K_{r,2}, \qquad M_r = \#K_r. 
\end{equation}   
Note that $M_0=1$ and $M_r = M_{r,1}+M_{r,2}$ for $r>0$. 
Finally, we denote the set of ranks in $\mathcal{L}$ by
\begin{equation} \label{LR} 
R_{\mathcal{L}} =\{r\in\mathbb{Z}_{\geq 0}: \text{exists } R\in\mathcal{L} \text{ with } Q_{R} \text{ of rank } r \}.
\end{equation}

For any positive integer $r$, we define the set  
\begin{equation}\label{[r]_q}
[r]_q := \{ q^{\ell}+1 : 0< \ell < r \} = \{q+1, q^2+1, \ldots,q^{r-1}+1\}. 
\end{equation}
We now restate Lemma 2.1 in \cite{ZZDX2} in more generality and give a proof for completness. We will need it to calculate the dimensions of the four families of codes considered in this section.
\begin{lem}\label{dim}
Let $m$ be a positive even integer and let $M=\{1\}\cup [\frac m2]_q$. If $\alpha$ is a primitive element of $\ff_{q^m}$ then we have: %the following statements:
\begin{enumerate}[$(a)$]
	\item $\alpha^{-u}$ and $\alpha^{-v}$ are not conjugated for all distinct elements $u,v \in M$.
	\item The minimum $m_u \in \{1,\ldots,m\}$ such that $q^{m_u} u \equiv u \pmod{q^m-1}$ is $m$ for all $u\in M$. 
\end{enumerate}
\end{lem}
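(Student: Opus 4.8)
The plan is to recast both statements in terms of cyclotomic cosets modulo $n = q^m-1$. Indeed, $\alpha^{-u}$ and $\alpha^{-v}$ are conjugate over $\ff_q$ precisely when $v \equiv u q^j \pmod{q^m-1}$ for some $j \ge 0$, and the integer $m_u$ in part $(b)$ is nothing but the size of the cyclotomic coset of $u$ modulo $q^m-1$ (equivalently, the degree of $\alpha^{-u}$ over $\ff_q$). The key observation is that multiplication by $q$ modulo $q^m-1$ acts as a cyclic shift on base-$q$ digits: writing a residue in its unique length-$m$ base-$q$ expansion with digits in $\{0,\dots,q-1\}$, multiplying by $q^j$ and reducing mod $q^m-1$ rotates the digit string by $j$ positions. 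Each $u \in M$ has an extremely sparse expansion: $u=1$ has a single $1$-digit in position $0$, while $u = q^\ell+1$ with $0<\ell<m/2$ has exactly two $1$-digits, in positions $0$ and $\ell$ (distinct since $\ell\neq 0$).

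For part $(b)$ I would argue as follows. If $u=1$, then $q^{m_u}\equiv 1 \pmod{q^m-1}$ forces $m_u=m$, the multiplicative order of $q$ modulo $q^m-1$. If $u=q^\ell+1$, then $u q^j$ carries $1$-digits in positions $j$ and $j+\ell$ reduced mod $m$, and these positions are always distinct because $0<\ell<m$; hence $uq^j\equiv u$ iff $\{j,\,j+\ell\}\equiv\{0,\ell\}$ as sets modulo $m$. The matching $j\equiv 0,\ j+\ell\equiv \ell$ gives the smallest positive solution $j=m$, whereas the matching $j\equiv \ell,\ j+\ell\equiv 0$ would require $2\ell\equiv 0 \pmod m$, impossible since $0<2\ell<m$. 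Thus $m_u=m$ in every case.

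For part $(a)$ the same digit bookkeeping settles each pairing of distinct $u,v\in M$. If $u=1$ and $v=q^\ell+1$, no cyclic shift of a single $1$-digit can produce two $1$-digits, so $\alpha^{-u}$ and $\alpha^{-v}$ lie in different cosets. If $u=q^{\ell_1}+1$ and $v=q^{\ell_2}+1$ with $0<\ell_1<\ell_2<m/2$, then $uq^j\equiv v$ forces $\{j,\,j+\ell_1\}\equiv\{0,\ell_2\}\pmod m$; the case $j\equiv 0$ yields $\ell_1\equiv\ell_2$, contradicting $\ell_1<\ell_2$, while the case $j\equiv \ell_2,\ j+\ell_1\equiv 0$ yields $\ell_1+\ell_2\equiv 0\pmod m$, impossible since $0<\ell_1+\ell_2<m$. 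Hence no two distinct elements of $M$ are conjugate.

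The only genuine subtlety is keeping the reduction-mod-$m$ bookkeeping honest, and this is exactly where the hypothesis $\ell<m/2$ (that is, the use of $[\tfrac{m}{2}]_q$ rather than a larger range) does all the work: it guarantees $2\ell<m$ and $\ell_1+\ell_2<m$, which is precisely what rules out the ``reflected'' matchings, where the shifted pair of digit positions coincides with the target pair in reversed order. Without this bound those reflected cases genuinely occur --- for instance $\ell=m/2$ gives $q^{m/2}(q^{m/2}+1)\equiv q^{m/2}+1 \pmod{q^m-1}$, so $m_u$ would drop below $m$ --- confirming that the restriction to $[\tfrac{m}{2}]_q$ is essential rather than cosmetic.
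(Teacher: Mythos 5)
Your proof is correct and rests on the same essential ingredient as the paper's: the uniqueness of the length-$m$ base-$q$ expansion together with the bound $\ell<\frac m2$ (equivalently $2\ell<m$ and $\ell_1+\ell_2<m$) to exclude the reflected matching of digit positions. The paper carries this out by an explicit case split on whether $s+\ell_1$ is smaller or larger than $m$, whereas your cyclic-shift-of-digit-strings formulation handles the wrap-around uniformly; this is a cleaner packaging of the same argument rather than a different route.
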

\begin{proof}
	For $(a)$ it is enough to show that $q^{s}(q^{\ell_1}+1)\not\equiv q^{\ell_2}+1\pmod{q^m-1}$ and $q^s\not\equiv q^{\ell}+1\pmod{q^m-1}$ for $1\le s\le m-1$ and $\ell,\ell_1 ,\ell_2 <\frac m2$ with $\ell_1 \neq \ell_2$. We will show the first statement. Suppose that there is some $s\in \{1,\ldots,m\}$ such that 
	$q^{s}(q^{\ell_1}+1)\equiv q^{\ell_2}+1\pmod{q^m-1}$. Then,
	$$q^{s+\ell_1}+q^s\equiv q^{\ell_2}+1\pmod{q^m-1}$$
	If $s+\ell_1<m$ then $q^{s+\ell_1}+q^s= q^{\ell_2}+1$ as integers. 
	The uniqueness of the $q$-ary expansion of integers implies that $s+\ell_1=\ell_2$ and $s=0$, which cannot happen. 
	Now, if $s+\ell_1>m$ then $s>\frac m2$ since by hypothesis $\ell_1 < \frac m2$, 
	and hence there exists a positive integer $t<\frac m2$ such that $s=m-t$. Notice that $\ell_1 >t$ and $0<\ell_1-t<\frac m2$, thus
	$q^{s+\ell_1}+q^s\equiv q^{\ell_1-t}+q^s\pmod{q^m-1}$
	and hence 
	$$q^{\ell_1-t}+q^s \equiv q^{\ell_2}+1 \pmod{q^m-1}$$
	Since all the powers are less than $m$, we obtain $q^{\ell_1-t}+q^s= q^{\ell_2}+1$ as integers, 
	by unicity of $q$-ary expansion we obtain that $s=\ell_2$ and $\ell_1 -t=0$ which cannot occur. 
	Therefore, $\alpha^{-u}$ and $\alpha^{-v}$ are not conjugated for all $u\neq v$ in $[\frac m2 ]_q$. 
	In a similar way, it can be shown that $\alpha^{-1}$ and $\alpha^{-u}$ are not conjugated for all $u\in [\frac  m2]_q$.
	
	The item $(b)$ can be proved by a similar argument as in $(a)$.
\end{proof}	

We are now in a position to give the weight distribution of the four codes considered. We give the spectra in two theorems.

\begin{thm} \label{Spec C_L}
Let $q$ be a prime power, $m$ a non-negative integer and $\mathcal{L}=\langle x^{q^{\ell_1}}, x^{q^{\ell_2}}, \ldots, x^{q^{\ell_s}} \rangle$ an ideal in $\mathbb{F}_{q^m}[x]$ such that $1 \le \ell_1 < \ell_2 < \cdots < \ell_s < \frac m2$. If $\mathcal{L}$ is an even rank family then the dimensions of the cyclic codes $\cod_{\mathcal{L}}$ and $\cod_{\mathcal{L},0}$ are $ms$ and $ms+1$ respectively and their spectra are given by Tables 1 and 2 below.
\end{thm}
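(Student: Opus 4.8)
The plan is to reduce both assertions to the Delsarte description \eqref{delsarte} together with Lemma~\ref{dim} for the dimensions, and to the counting formula \eqref{NQ+-} for the spectra.

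First I would compute the dimensions. Writing a general $R\in\mathcal{L}$ as $R(x)=\sum_{j=1}^s a_j x^{q^{\ell_j}}$ with $a_j\in\ff_{q^m}$, the $i$-th coordinate of $c_R$ equals
$$\tr_{q^m/q}(\alpha^i R(\alpha^i)) = \sum_{j=1}^s \tr_{q^m/q}\big(a_j\, g_j^{\,i}\big), \qquad g_j=\alpha^{q^{\ell_j}+1},$$
which is exactly the form \eqref{delsarte} for the cyclic code with zeros $\{\alpha^{-(q^{\ell_j}+1)}\}_{j=1}^s$. By Lemma~\ref{dim}$(b)$ each exponent $q^{\ell_j}+1$ (with $\ell_j<\tfrac m2$) has a full-length $q$-cyclotomic coset, so the minimal polynomial $h_j$ of $g_j$ has degree $m$, and by Lemma~\ref{dim}$(a)$ these cosets are pairwise distinct, so the $h_j$ are distinct irreducibles. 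Hence the map $R\mapsto c_R$ is injective and $\dim_{\ff_q}\CC_{\mathcal{L}}=\sum_j \deg h_j = sm$. For $\CC_{\mathcal{L},0}$ the constant $b$ adjoins the all-ones word $\mathbf 1$, i.e.\ the zero-exponent coset $\{0\}$, which has size $1$ and is distinct from every coset above; equivalently, no $Q_R$ is a nonzero constant on $\ff_{q^m}^*$, so $\mathbf 1\notin\CC_{\mathcal{L}}$. Thus $\dim_{\ff_q}\CC_{\mathcal{L},0}=sm+1$.

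Next I would express the weights via $N_{Q_R}$. A coordinate of $c_{R,b}$ vanishes precisely when $Q_R(x)=-b$, so the number of zero coordinates is $N_{Q_R}(-b)$ if $b\neq0$ and $N_{Q_R}(0)-1$ if $b=0$ (discarding $x=0$, where $Q_R(0)=0$). Since $\mathcal{L}$ is an even rank family of constant type $\varepsilon$, inserting \eqref{NQ+-} with $\nu(0)=q-1$ and $\nu(\xi)=1$ for $\xi\neq0$ gives, for $R$ of rank $r$,
$$w(c_R) = (q-1)q^{m-\frac r2-1}\big(q^{\frac r2}-\varepsilon\big), \qquad w(c_{R,b}) = (q-1)q^{m-1}-1-\varepsilon q^{m-\frac r2-1} \quad (b\neq0).$$
Recording these values and assigning multiplicities finishes the proof. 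For $\CC_{\mathcal{L}}$ only $b=0$ occurs: $R=0$ gives $A_0=1$ by injectivity, and each rank $r>0$ contributes the weight above with frequency $M_{r,1}$ for type $1$ ($\varepsilon=+1$) and $M_{r,2}$ for type $3$ ($\varepsilon=-1$), cf.\ \eqref{DefKri}--\eqref{M1r}. For $\CC_{\mathcal{L},0}$ each $R$ is paired with $q$ scalars $b$: the value $b=0$ reproduces the Table~1 weights, while the $q-1$ values $b\neq0$ give the shifted weights; collecting equal weights and summing the relevant $M_{r,i}$, weighted by the number of admissible $b$, yields Table~2.

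The genuine content sits in the dimension claim, which is entirely carried by Lemma~\ref{dim}: once the cyclotomic cosets of the $q^{\ell_j}+1$ are known to be distinct and of full length, injectivity of $R\mapsto c_R$ and the value $sm$ are immediate. The one point requiring care in the spectra is the bookkeeping: one must check that the weights arising from different ranks in $R_{\mathcal{L}}$, and from the two regimes $b=0$ and $b\neq0$, are correctly separated (or merged, with frequencies added) so that the rows of Tables~1 and 2 list each distinct weight exactly once.
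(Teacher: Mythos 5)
Your overall strategy coincides with the paper's: the spectra are obtained by expressing $w(c_{R,b})$ through the point counts $N_{Q_R}(\xi)$ of \eqref{NQ+-}, and the dimensions through Delsarte's description and Lemma~\ref{dim}. There is, however, a concrete problem in the $b\neq 0$ branch of your weight computation. You take the stated convention $\nu(\xi)=1$ for $\xi\in\ff_q^*$ at face value and obtain
$$w(c_{R,b})=(q-1)q^{m-1}-1-\varepsilon\, q^{m-\frac r2-1}\qquad (b\neq 0),$$
whereas Table~2 lists $w_{3,i}=q^m-q^{m-1}+(-1)^{i+1}q^{m-\frac r2-1}-1$, i.e.\ $+\varepsilon\,q^{m-\frac r2-1}$ with $\varepsilon=(-1)^{i+1}$. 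So the table you actually derive has the rows $w_{3,1}$ and $w_{3,2}$ interchanged (equivalently, the frequencies $M_{r,i}(q-1)$ attached to the wrong weights), and your closing claim that the computation ``yields Table~2'' is not warranted. The source of the mismatch is that the value $\nu(z)=1$ for $z\ne 0$ printed after \eqref{NQ+-} is a typo: the correct Lidl--Niederreiter convention is $\nu(z)=-1$ for $z\in\ff_q^*$ (test it on $Q(x,y)=xy$, a rank-$2$ type-$1$ form: the fibre over $\xi\ne0$ has $q-1$ points, not $q+1$), and it is this value that the paper's own proof and Table~2 implicitly use. A correct proof must either fix $\nu$ or verify the sign independently; as written, your derivation contradicts the table it claims to establish, and the discrepancy should have been caught.

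A second, more minor point concerns the dimension. The code $\CC_{\LL}$ of \eqref{codigoCL} has length $q^m-1$, while the Delsarte codeword \eqref{delsarte} to which you match $c_R$ has length $n=(q^m-1)/\delta$ with $\delta=\gcd(q^m-1,q^{\ell_1}+1,\ldots,q^{\ell_s}+1)$, which may exceed $1$. Your identification of $c_R$ with a word ``exactly of the form \eqref{delsarte}'' is therefore not literally correct; the paper bridges this by observing that each $c_R$ is the $\delta$-fold concatenation of the corresponding Delsarte word, a linear injection that preserves dimension, and only then concludes $\dim\CC_{\LL}=\deg h=sm$. This is easily repaired but must be said. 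The rest of your argument --- the $b=0$ weights (which do match Table~1 and the $w_{2,i}$ rows of Table~2), the bookkeeping via $M_{r,i}$ and $R_{\LL}$, and the use of Lemma~\ref{dim} to get $\deg h_{\ell_j}=m$ with pairwise distinct cyclotomic cosets --- follows the paper's proof.
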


\begin{table}[H]
	\centering
	\caption{Weight distribution of $\CC_{\mathcal{L}}$ ($r \in R_{\mathcal{L}}$, $i=1,2$).} 
		\begin{tabular}{|c|c|c|}
		\hline 
		weight & frequency \\ 
		\hline \hline
		$w_0= 0$ & $1$ \\
		\hline
		$w_{1,i} = q^m-q^{m-1}+(-1)^{i}(q-1)q^{m-\frac{r}{2}-1}$ & $M_{r,i}$  \\ 
		\hline  
	\end{tabular}
\end{table}

\begin{table}[H]
	\centering
	\caption{ Weight distribution of $\CC_{\mathcal{L},0}$ ($r \in R_{\mathcal{L}}$, $i=1,2$).}
	\begin{tabular}{|c|c|c|}
		\hline 
		weight & frequency \\ 
		\hline \hline
		$w_0=0$ & $1$ \\
		\hline
		$w_1=q^m-1$ & $q-1$ \\
		\hline  
		$w_{2,i} = q^m-q^{m-1}+(-1)^{i}(q-1)q^{m-\frac{r}{2}-1}$ & $M_{r,i}$  \\ 
		\hline
		$w_{3,i} = q^m-q^{m-1}+(-1)^{i+1} q^{m-\frac{r}{2}-1}-1$ & $M_{r,i}(q-1)$ \\
		\hline
	\end{tabular}
\end{table}

\begin{proof}
By definition $w(c_R)=\#\{x\in\ff_{q^m}^*: Q_{R}(x)\neq 0\}$, then 
$$ w(c_R)=q^m-1-\#\{x\in\ff_{q^m}^*: Q_{R}(x)=0\}. $$
Analogously, $ w(c_{R,b}) = q^m-1 - \#\{x\in\ff_{q^m}^*: Q_{R}(x)=-b\}.$ 
Then we have that $$w(c_{R,b})=\left\{
		\begin{array}{lll}
		q^m-N_{Q_R}(0)    & &  \text{if } b=0 , \\[1.2mm]
		q^m-N_{Q_R}(-b)-1 & & \text{if } b\neq0.
		\end{array}
		\right.$$
 If $Q_{R}$ has rank $r$ and type $\varepsilon_{R}$ then 
$$w(c_{R,b})=\left\{
\begin{array}{lll}
q^m-q^{m-1} - \varepsilon_{R} \, (q-1) \, q^{m-\frac{r}{2}-1}    & &  \text{if } b=0, \\[1.5mm]
q^m-q^{m-1} + \varepsilon_{R} \, q^{m-\frac{r}{2}-1}-1 & & \text{if } b\neq0.
\end{array}
\right.$$
From these facts, using the numbers $M_r, M_{r_i}$ and the set $R_{\mathcal{L}}$, we obtain the weights and frequencies given in 
Tables 1 and 2, and the result thus follows.

Let us consider the polynomials $h(x)=\prod_{j=1}^s h_{\ell_j}(x), $% \qquad \text{and} \qquad h_{0}(x) = h(x)(x-1),$$ 
where $h_{\ell_j}(x)$ are the minimal polynomials of $\alpha^{-(q^{\ell_{j}}+1)}$ over $\ff_q$ with $\alpha$ a primitive element of $\ff_{q^m}$ and $j=1,\ldots,s$.  By Delsarte's Theorem, if $n=\frac{q^m-1}{\delta}$ with $\delta=\gcd(q^m-1,q^{\ell_1}+1,\ldots,q^{\ell_s}+1)$ then $h(x)$ is the check polynomial of the cyclic code 
\begin{equation}\label{def C^*}
\CC^{*}_{\mathcal{L}}=\{ c(a_1,\ldots,a_s)= \big( \sum_{j=1}^s \tr _{q^m/q}(a_j  g_{j}^i) \big)_{i=1}^n : a_j\in \ff_{q^m} \} \subset \ff_q^n
\end{equation}
where $g_{j}=\alpha^{q^{\ell_j}+1}$ for $j=1,\ldots,s$. 
Since the dimension of a cyclic code is given by the degree of its check polynomial, we have  
$$\dim \CC^{*}_{\mathcal{L}}=\deg h(x)$$ %\qquad \text{and}$$ %\qquad  \dim \CC_{\mathcal{L}} = \deg h(x)+1.$$
It is known, by general theory of finite fields, that the degree of the minimal polynomial over $\ff_q$ of an element $u\in \ff_q$ 
is given by the size of its cyclotomic coset, and this size coincides with the minimum $1\le m_{u}\le m$ such that $q^{m_u}u\equiv u\pmod{q^m-1}$. 
By Lemma \ref{dim}, all of the elements in $\mathcal{L}$ are not conjugated to each other and $\deg h_{\ell_j}(x)=m$ for $j=1,\ldots,s$. Hence $\deg h(x)=sm$ and thus $\dim \CC^{*}_{\mathcal{L}}=sm$.% and  $\dim \CC_{\mathcal{L},0}=sm+1$, as desired.

On the other hand, if $R(x)=\sum_{j=1}^s a_j x^{q^{\ell_j}}\in \mathcal{L}$, by linearity of the trace function we have that
$$c_R = \big(\tr_{q^m/q}(xR(x))\big)_{x\in \ff_{q^m}^*} = \big( \sum_{j=1}^s \tr_{q^m/q}(a_j \alpha^{i(q^{\ell_j}+1)})\big)_{i=1}^{q^m-1} = \big( \sum_{j=1}^s \tr_{q^m/q}(a_j g_{j}^i)\big)_{i=1}^{q^m-1}.$$
Notice that if $n=\frac{q^m-1}{\delta}$  as before, by modularity %ciclicity of the power 
we get
\begin{equation}\label{cycle}
\big( \sum_{j=1}^s \tr_{q^m/q}(a_j g_{j}^i)) \big)_{i=1}^{n} = \big( \sum_{j=1}^s \tr_{q^m/q}(a_j g_{j}^i)) \big)_{i=(t-1)n+1}^{tn} 
%\qquad text{ for } r=1,\ldots ,\delta.
\end{equation}
for every $1\le t \le \delta$.
Thus, denoting $\mathbf{c}=c(a_1,\ldots,a_s)\in \CC_{\mathcal{L}}$, %and $R(x)=\sum_{j=1}^s a_{j}x^{q^{\ell_j}}$, 
by \eqref{cycle} we have that 
$c_R = \big(\mathbf{c} \,| \cdots |\, \mathbf{c} \big)$ %\qquad \text{ 
$\delta$-times.
Hence all the words in $\CC_{\mathcal{L}}$ are obtained by $\delta$-concatenation of the words of the cyclic code $\CC^*_{\mathcal{L}}$, this implies that the dimension of these codes are the same. Therefore $\dim \CC_{\mathcal{L}} = \dim \CC^*_{\mathcal{L}}=sm$. The same argument shows that $\dim \CC_{\mathcal{L},0}=sm+1$. 
\end{proof}

\begin{thm} \label{Spec C_L,1}
Let $q$ be a prime power, $m$ a non-negative integer and $\mathcal{L} = \langle x^{q^{\ell_1}}, x^{q^{\ell_2}}, \ldots, x^{q^{\ell_s}} \rangle$ an ideal in $\mathbb{F}_{q^m}[x]$ such that $1 \le \ell_1 < \ell_2 < \cdots < \ell_s < \frac m2$.
If $\mathcal{L}$ is an even rank family, then 
the dimensions of the cyclic codes $\cod_{\mathcal{L},1}$ and $\cod_{\mathcal{L},2}$ are $m(s+1)$ and $m(s+1)+1$ respectively and their spectra are given by Tables 3 and 4 below. 
\end{thm}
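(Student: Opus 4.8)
The plan is to follow the template of the proof of Theorem~\ref{Spec C_L}, enlarging the bookkeeping to absorb the extra linear term $\beta x$ and, for $\CC_{\LL,2}$, the constant $b$. Two ingredients are needed: a reduction of every weight to a value of the counting function $N_{Q_R,\beta}(\xi)$ of \eqref{NQ}, whose distribution over $\beta$ is exactly what Lemma~\ref{Result Kl} supplies; and a Delsarte-plus-Lemma~\ref{dim} count of the two dimensions. First I would reduce the weights. The codeword $c_R(\beta)$ has weight $\#\{x\in\ff_{q^m}^*: Q_R(x)+\tr_{q^m/q}(\beta x)\neq 0\}$, and since $x=0$ always lies in the kernel this is $q^m-N_{Q_R,\beta}(0)$. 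For $\CC_{\LL,2}$, using $c_{R,b}(\beta)=c_R(\beta)+b$,
$$ w(c_{R,b}(\beta)) = \begin{cases} q^m - N_{Q_R,\beta}(0) & b=0, \\ q^m - 1 - N_{Q_R,\beta}(-b) & b\neq 0, \end{cases} $$
the split arising from whether $x=0$ solves $Q_R(x)+\tr_{q^m/q}(\beta x)=-b$, which happens iff $b=0$. This is precisely the formula of Theorem~\ref{Spec C_L}, now with $\beta$ present.

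Next, for each fixed $R\neq 0$ of rank $r$ and type $\varepsilon_R$, I would feed the distribution of Lemma~\ref{Result Kl} into these formulas. For a target $\xi$ the lemma partitions the $q^m$ choices of $\beta$ into a generic class of size $q^m-q^r$ on which $N_{Q_R,\beta}(\xi)=q^{m-1}$, and, for each $c\in\ff_q$, a class of size $q^{r-1}+\varepsilon_R\,\nu(c)\,q^{\frac r2-1}$ on which $N_{Q_R,\beta}(\xi)=q^{m-1}+\varepsilon_R\,\nu(\xi+c)\,q^{m-\frac r2-1}$. For $\CC_{\LL,1}$ ($\xi=0$) this produces three weights per $R$: the generic $q^m-q^{m-1}$ (independent of $r$); the value from the class $c=0$, where $\nu(\xi+c)=\nu(0)$; and a common value from all classes $c\neq 0$, obtained by summing their sizes. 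Writing $\varepsilon_R=(-1)^{i+1}$ for the type index $i=1,2$ casts these into the $(-1)^i(q-1)q^{m-\frac r2-1}$ and $(-1)^{i+1}q^{m-\frac r2-1}$ shapes already seen in Tables 1 and 2, and summing their frequencies over the rank/type classes with the counts $M_{r,i}$ of \eqref{M1r} fills Table 3. The rank-$0$ term $R=0$ is treated separately: $\beta=0$ gives the zero word and $\beta\neq 0$ contributes to the generic weight (indeed $M_0(q^m-q^0)=q^m-1$ folds the $R=0$ generic count into $\sum_{r\in R_\LL}M_r(q^m-q^r)$).

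For $\CC_{\LL,2}$ I would repeat this for every $b\in\ff_q$. The $b=0$ layer reproduces Table 3; each $b\neq 0$ layer applies Lemma~\ref{Result Kl} with $\xi=-b\neq 0$ and then subtracts $1$ from every weight, yielding the shifted weights and, from $R=0,\ \beta=0$, the constant words of weight $q^m-1$; merging the coincident weights across layers and tallying against the grand total $|\LL|\,q^m\,q=q^{m(s+1)+1}$ fills Table 4. The dimensions follow from Delsarte's theorem as in Theorem~\ref{Spec C_L}. The zeros of $\CC_{\LL,1}$ are $\{\alpha^{-(q^{\ell_j}+1)}\}_{j=1}^s\cup\{\alpha^{-1}\}$; the presence of the exponent $1$ forces $\delta=\gcd(q^m-1,q^{\ell_1}+1,\dots,q^{\ell_s}+1,1)=1$, so the code has full length $q^m-1$ and its check polynomial is the product of the minimal polynomials of these zeros. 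Since $1\le\ell_j<\tfrac m2$ gives $q^{\ell_j}+1\in[\tfrac m2]_q$ and $1\in\{1\}\cup[\tfrac m2]_q$, Lemma~\ref{dim}$(a)$ makes these $s+1$ zeros pairwise non-conjugate and Lemma~\ref{dim}$(b)$ gives each a cyclotomic coset of size $m$; hence $\deg h=m(s+1)$ and $\dim\CC_{\LL,1}=m(s+1)$. Adjoining the constant $b$ (the all-ones word, i.e.\ the zero $\alpha^0=1$, whose coset has size $1$) raises this by $1$, giving $\dim\CC_{\LL,2}=m(s+1)+1$. Matching these dimensions against $|\LL|\,q^m$ and $|\LL|\,q^m\,q$ shows the parametrizations $(R,\beta)$ and $(R,\beta,b)$ are bijective, which is what legitimizes reading the frequencies off the class counts.

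The main obstacle is the combinatorial bookkeeping in the middle step. The delicate feature of Lemma~\ref{Result Kl} is that the size of a $\beta$-class, $q^{r-1}+\varepsilon_R\nu(c)q^{\frac r2-1}$, and the weight it produces, governed by $\nu(\xi+c)$, are controlled by two different evaluations of $\nu$; for $\CC_{\LL,2}$ the target $\xi=-b$ moves with $b$, so the class index $c$ that yields the exceptional value $\nu(\xi+c)=\nu(0)$ shifts from $c=0$ to $c=b$, and the sum $\sum_{c\in\ff_q}$ must be re-evaluated for each $b$ before equal weights are merged. Keeping these counts consistent across the three parameters, and correctly isolating the rank-$0$ and constant-word contributions, is the crux; the remaining manipulations are the same pattern already carried out for $\CC_\LL$ and $\CC_{\LL,0}$.
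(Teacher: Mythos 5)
Your proposal is correct and takes essentially the same route as the paper: the paper merely packages the $\beta$-distribution of Lemma~\ref{Result Kl} into the exponential sums of Lemma~\ref{SQ} and reads the weights from $w(c_R(\beta)) = q^m - q^{m-1} - \tfrac1q S_{Q_R}(\beta)$ and $w(c_{R,b}(\beta)) = q^m - q^{m-1} - 1 - \tfrac1q S_{Q_R,b}(\beta)$, which, since $S_{Q,b}(\beta) = q\,N_{Q,\beta}(-b) - q^m$, is the same computation as your direct use of $N_{Q_R,\beta}(\xi)$ (including the shift of the exceptional class from $c=0$ to $c=b$, which is exactly what distinguishes the two distributions in Lemma~\ref{SQ}). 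The dimension count via Delsarte's theorem and Lemma~\ref{dim} is identical to the paper's.
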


\begin{table}[ht!]
	\centering
	\caption{ Weight distribution of $\CC_{\mathcal{L},1}$  ($r \in R_{\mathcal{L}}$, $i=1,2$).} 
	\medskip
	\begin{tabular}{|c|c|c|}
		\hline 
		weight & frequency \\ 
		\hline \hline
		$w_0=0$ & $1$ \\
		\hline
		$w_{1,i} = q^m-q^{m-1}$ &  $\sum_{r\in R_{\mathcal{L}}}M_r(q^m-q^r) $ \\ 
		\hline 
		$w_{2,i} = q^m-q^{m-1}+(-1)^{i}(q-1)q^{m-\frac{r}{2}-1}$ & $M_{r,i}(q^{r-1}+(-1)^{i+1}(q-1)q^{\frac{r}{2}-1})$ \\ 
		\hline 
		$w_{3,i} = q^m-q^{m-1}+(-1)^{i+1}q^{m-\frac{r}{2}-1}$  & $M_{r,i}(q^{r-1}+(-1)^{i}q^{\frac{r}{2}-1})(q-1)$ \\ 
		\hline  
	\end{tabular}
\end{table}

\begin{table}[ht!]
	\centering
	\caption{ Weight distribution of $\CC_{\mathcal{L},2}$  ($r \in R_{\mathcal{L}}$, $i=1,2$).} 
	\medskip
	\begin{tabular}{|c|c|c|}
		\hline 
		weight & frequency \\ 
		\hline \hline
		$w_0=0$ & $1$ \\
		\hline
		$w_{1} = q^m-q^{m-1}$ &  $\sum_{r\in R_{\mathcal{L}}}M_r(q^m-q^r)$ \\ 
		\hline 
		$w_{2,i} = q^m-q^{m-1}+(-1)^{i}(q-1)q^{m-\frac{r}{2}-1}$ & $M_{r,i}(q^{r-1}+(-1)^{i+1}(q-1)q^{\frac{r}{2}-1})$  \\ 
		\hline 
		$w_{3,i} = q^m-q^{m-1}+(-1)^{i+1}q^{m-\frac{r}{2}-1}$  & $M_{r,i}(q^{r-1}+(-1)^{i}q^{\frac{r}{2}-1})(q-1)$ \\ 
		\hline  
		$w_{4} = q^m-1$ & $q-1$ \\
		\hline
		$w_{5} = q^m-q^{m-1}-1$ &  $(q-1)\sum_{r\in R_{\mathcal{L}}}M_r(q^m-q^r)$ \\ 
		\hline 
		$w_{6,i} = q^m-q^{m-1}-1+(-1)^{i}(q-1)q^{m-\frac{r}{2}-1}$ & $M_{r,i}(q^{r-1}+(-1)^{i}q^{\frac{r}{2}-1})(q-1)$  \\ 
		\hline 
		$w_{7,i} = q^m-q^{m-1}-1+(-1)^{i+1}q^{m-\frac{r}{2}-1}$  & $M_{r,i}(q^{r}-q^{r-1}+(-1)^{i+1}q^{\frac{r}{2}-1})(q-1)$ \\ 
		\hline  
	\end{tabular}
\end{table}

\begin{proof}
The dimensions of $\CC_{\mathcal{L},1}$ and $\CC_{\mathcal{L},2}$  can be obtained in the same way as in Theorem \ref{Spec C_L} using Lemma \ref{dim}.

Now, let $R\in \mathcal{L}$ and suppose the quadratic form $Q_{R}$ has rank $r$ and type $\varepsilon_{R}$.
Let's see the weights of the words of $\CC_{\mathcal{L},1}$. By the orthogonality property of the characters of $\ff_q$, we have that 
	\begin{eqnarray*}
		w(c_{R}(\beta)) &=& q^m-1 - \# \{x\in\mathbb{F}_{q^m}^*:Q_{R}(x)+\tr_{q^m/q}(\beta x)=0\} \\[1.5mm]
		&=& q^m-1 -\tfrac{1}{q} \sum_{a\in\mathbb{F}_{q}} \sum_{x\in\mathbb{F}_{q^m}^*} 
		\zeta_{p}^{\tr_{q/p}(a(Q_{R}(x)+\tr_{q^{m}/q}(\beta x)))} \\[1mm]
		&=& q^m-1 -\tfrac{1}{q} \big\{ \sum_{a\in\mathbb{F}_{q}}\sum_{x\in\mathbb{F}_{q^m}} 
		\zeta_{p}^{\tr_{q/p}(a(Q_{R}(x)+\tr_{q^{m}/q}(\beta x)))}-q \big\} \\[1mm]
		&=&q^m-1 -\tfrac{1}{q} \big\{ \sum_{a\in\mathbb{F}_{q}^*}\sum_{x\in\mathbb{F}_{q^m}} 
		\zeta_{p}^{\tr_{q/p}(a(Q_{R}(x)+\tr_{q^{m}/q}(\beta x)))}+q^m-q \big\} .
	\end{eqnarray*}
	Therefore  
	\begin{equation}
	w(c_{R}(\beta))=q^m-q^{m-1} -\tfrac{1}{q}S_{Q_{R}}(\beta),
	\end{equation}
	where $S_{Q_{R}}$ is the exponential sum \eqref{SQb}, with $b=0$.
	In the same way, when $b\neq0$, we get
	\begin{equation}
	w(c_{R,b}(\beta))= q^m-q^{m-1}-1 -\tfrac{1}{q}S_{Q_{R},b}(\beta).
	\end{equation} 
	Notice that if $R=0$, then $Q_{R}=0$ and, for all $\beta\neq 0$, we have 
	$w(c_{0}(\beta))=q^m-q^{m-1}$. 
	If $R$ and $\beta$ are zeros, then 
	$w(c_{0,b}(0))=q^m-1$ if $b\neq 0$. When $b=0$ we will denote $c_{R}(\beta)=c_{R,0}(\beta)$. 
	
	Now, let $K_{r,1}$ and $K_{r,2}$ be as in \eqref{DefKri}. Then, $\varepsilon_{R}=(-1)^{i+1}$ if $R\in K_{r,i}$, $i=1,2$. 
	By Lemma~\ref{SQ}, we have that
	{\footnotesize
		$$w(c_{R,b}(\beta)) = \begin{cases}
		0 &  \text{if } b=0, \, R=0, \\[1mm]
		q^m-q^{m-1} &  \text{if } b=0, \, R\in K_{r,i}, \text{ for } q^m-q^r \, \beta's, \\[1mm]
		q^m-q^{m-1}+(-1)^{i} (q-1)q^{m-\frac{r}{2}-1} & \text{if } b=0,\, R\in K_{r,i}, \text{ for } q^{r-1}+(-1)^{i+1}(q-1)q^{\frac{r}{2}-1} \, \beta's, \\[1mm]
		q^m-q^{m-1}+(-1)^{i+1}q^{m-\frac{r}{2}-1} &  \text{if } b=0,\, R\in K_{r,i}, \text{ for } (q^{r-1}+(-1)^{i}q^{\frac{r}{2}-1})(q-1) \, \beta's, \\[1mm]
		q^m-1 &  \text{if } b\neq 0, \, R=0, \, \beta=0, \\[1mm]
		q^m-q^{m-1}-1 &  \text{if } b\neq 0, \, R\in K_{r,i}, \text{ for } q^m-q^r \, \beta's, \\[1mm]
		q^m-q^{m-1}+(-1)^{i} (q-1)q^{m-\frac{r}{2}-1}-1 & \text{if } b\neq0,\, R\in K_{r,i}, \text{ for } q^{r-1}+(-1)^{i}q^{\frac{r}{2}-1} \, \beta's, \\[1mm]
		q^m-q^{m-1}+(-1)^{i+1}q^{m-\frac{r}{2}-1}-1 &  \text{if } b\neq0,\, R\in K_{r,i}, \text{ for } q^{r}-q^{r-1}+(-1)^{i+1}q^{\frac{r}{2}-1} \, \beta's, \\[1mm]
		\end{cases}		$$}
	with $i=1,2$. From this, the result readily follows.
\end{proof}

\begin{rem} \label{rho weight}
($i$) A code is \textit{$t$-divisible} if the weight of every codeword is divisible by $t$.
Note that from Tables 1--4, % all the codes $\CC_{\LL,*}$ considered are $t$-weight codes for some $t$.
%More precisely, if $\rho = \# (R_\LL \smallsetminus \{0\}) = \# \{r=rank(Q_R) : 0 \ne R \in \LL \} >0$ denotes the number of different 
%non-zero ranks in the family $\LL$, with $Q_R$ as in \eqref{QRx}, then $\CC_\LL$ is a $(2\rho)$-weight code, $\CC_{\LL,0}$ and 
%$\CC_{\LL,1}$ are $(4\rho +1)$-weight codes and $\CC_{\LL,2}$ is an $(8\rho+3)$-weight code. 
%Moreover, one checks that 
the code $\CC_\LL$ is $q^{m-\frac r2 -1} (q-1)$-divisible, the code $\CC_{\LL,0}$ es $(q-1)$-divisible if and only if $M_{r,2}=0$ and that $\CC_{\LL,1}$ is $q^{m-\frac r2 -1}$-divisible.  

($ii$) If one perform the sum of the frequencies in each of the Tables 1--4 one checks that the dimensions of the codes are the ones given in Theorems \ref{Spec C_L} and \ref{Spec C_L,1}.
\end{rem}

\subsubsection*{Complete weight enumerator}
Suppose that the elements of $\mathbb{F}_q$ are ordered by $\omega_0,\omega_1,\ldots,\omega_{q-1}$, where $\omega_0 = 0$. The composition of the vector $v = (v_0,v_1,\ldots,v_{n-1})\in \mathbb{F}_{q}^n $ is defined by 
$$\mathrm{comp}(v) = (t_0,t_1,\ldots,t_{q-1}),$$ 
where each $t_i =t_i(v)=\#\{0\le j\le n-1: v_j=\omega_i\}$. 
Clearly, we have that 
$\sum_{i=0}^{q-1}t_{i}=n$. 
Let $\CC$ be a linear code of length $n$ over $\mathbb{F}_{q}$ and let 
$$A(t_0,t_1,\ldots,t_{q-1})=\#\{c\in \CC: \mathrm{comp}(c) = (t_{0},t_{1},...,t_{q-1})\}.$$ 
The \textit{complete weight enumerator} of $\CC$ is the polynomial 
\begin{equation}
W_\CC(z_0,z_1,\ldots,z_{q-1}) = \sum_{(t_0,\ldots,t_{q-1})\in B_n} A(t_0,t_1,\ldots,t_{q-1}) \, 
z_{0}^{t_{0}}z_{1}^{t_{1}}\cdots z_{q-1}^{t_{q-1}},
\end{equation}
where $B_n=\{(t_0,\ldots,t_{q-1}) \,:\,  t_{i}\geq 0, \, t_0 + \cdots + t_{q-1} =n\}$. 
\begin{lem}
Let $\CC$ be a linear code of length $n$ over $\mathbb{F}_{q}$ such that $t_{i}(c)=t_{j}(c)$ for all $i,j>0$ and $c\in \CC$. 
Then, if $A_{\ell}=\#\{c\in \CC: w(c)=\ell\}$, we have that
$$W_\CC(z_0,z_1,\ldots,z_{q-1})=\sum_{\ell=0}^{n} A_{\ell} \, z_{0}^{n-\ell}z_{1}^{\frac{\ell}{q-1}}\cdots z_{q-1}^{\frac{\ell}{q-1}}.$$
\end{lem}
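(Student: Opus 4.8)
The plan is to show that, under the stated hypothesis, the composition of every codeword is completely determined by its Hamming weight, so that codewords of equal weight contribute identical monomials to the complete weight enumerator, and the formula then follows by grouping the defining sum by weight.

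First I would fix an arbitrary codeword $c \in \CC$ and set $\ell = w(c)$. By the definition of weight, exactly $\ell$ coordinates of $c$ are nonzero and the remaining $n - \ell$ coordinates equal $\omega_0 = 0$; hence $t_0(c) = n - \ell$. The $\ell$ nonzero coordinates are distributed among the $q-1$ nonzero field elements $\omega_1, \ldots, \omega_{q-1}$, so that $\sum_{i=1}^{q-1} t_i(c) = \ell$. Invoking the hypothesis $t_i(c) = t_j(c)$ for all $i,j > 0$, these $q-1$ summands are all equal, whence $t_i(c) = \tfrac{\ell}{q-1}$ for every $i > 0$. In particular this forces $q-1 \mid \ell$, so that the exponents appearing are genuine nonnegative integers.

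Consequently $\mathrm{comp}(c) = (n-\ell, \tfrac{\ell}{q-1}, \ldots, \tfrac{\ell}{q-1})$ depends only on $\ell = w(c)$, and the monomial associated to $c$ in the complete weight enumerator is $z_0^{n-\ell} z_1^{\ell/(q-1)} \cdots z_{q-1}^{\ell/(q-1)}$. Thus all codewords of a fixed weight $\ell$ carry exactly the same monomial.

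Finally I would rewrite $W_\CC(z_0,\ldots,z_{q-1}) = \sum_{c \in \CC} z_0^{t_0(c)} \cdots z_{q-1}^{t_{q-1}(c)}$ by partitioning $\CC$ according to the weight of its elements. Since there are precisely $A_\ell = \#\{c \in \CC : w(c) = \ell\}$ codewords of weight $\ell$ and each contributes the identical monomial computed above, the sum collapses to $\sum_{\ell=0}^{n} A_\ell\, z_0^{n-\ell} z_1^{\ell/(q-1)} \cdots z_{q-1}^{\ell/(q-1)}$, which is the claimed identity. There is no genuine obstacle here; the only point meriting a word of care is the observation that the common value $\tfrac{\ell}{q-1}$ is an integer, and this is automatic from the hypothesis since it is the common value of $q-1$ equal nonnegative integers summing to $\ell$.
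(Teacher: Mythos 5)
Your proof is correct and follows essentially the same route as the paper: both use the hypothesis together with $\sum_i t_i(c)=n$ and $t_0(c)=n-w(c)$ to conclude $t_i(c)=w(c)/(q-1)$ for $i>0$, so the composition is determined by the weight, and then group the defining sum of $W_\CC$ by weight. Your explicit remark that $(q-1)\mid \ell$ is forced matches the paper's parenthetical observation that $\CC$ must be $(q-1)$-divisible.
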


\begin{proof}
Let $c=(c_0,\ldots,c_{n-1})\in \CC$. Since $t_{i}(c)=t_{j}(c)$ for $i,j>0$ and $\sum_{i=0}^{q-1}t_{i}=n$, we have that 
	$t_{i} = \frac{n-t_0}{q-1}$. On the other hand, since 
	$w(c)=n-\#\{0\le j \le n-1: c_{j}=0\}=n-t_0$, 
	we have that $t_0=n-w(c)$, and thus $t_{1}=\frac{w(c)}{q-1}$ 
(note that $\CC$ has to be necessarily $(q-1)$-divisible). 
	Therefore, we have that  $A(t_0,\ldots,t_{q-1})=A_{w(c)}$ if $t_0=n-w(c)$ for some $c\in \CC$ and $t_i=t_j$ for all $i,j>0$, 
	and $A(t_0,\ldots,t_{q-1})=0$ otherwise.
\end{proof}

As a direct consequence of the lemma, we obtain the complete weight enumerator of $\CC_{\mathcal{L}}$.

\begin{prop} \label{cweCL}
Let $q$ be a prime power, $m$ a non-negative integer and $\mathcal{L} = \langle x^{q^{\ell_1}}, x^{q^{\ell_2}}, \ldots, x^{q^{\ell_s}} \rangle$ an ideal in $\mathbb{F}_{q^m}[x]$ such that $1 \le \ell_1 < \ell_2 < \cdots < \ell_s < \frac m2$. If $\mathcal{L}$ is an even rank family then the complete weight enumerator of $\CC_{\mathcal{L}}$ is given by
	$$W_{\CC_\mathcal{L}}(z_0,\ldots,z_{q-1}) = z_{0}^{n} + 
	\sum_{i=1}^{2} \sum_{r\in R_{\mathcal{L}}} M_{r,i} \; z_{0}^{a(r,i)}z_{1}^{b(r,i)}\cdots z_{q-1}^{b(r,i)}$$
	where $M_{r,i}$ and $R_{\mathcal{L}}$ are as in \eqref{M1r} and \eqref{LR} and
	$$a(r,i)=q^{m-1}+(-1)^{i+1}(q-1)q^{m-\frac{r}{2}-1}-1, \qquad 	b(r,i)=q^{m-1}+(-1)^{i} q^{m-\frac{r}{2}-1}.$$ 
\end{prop}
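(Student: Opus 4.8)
The plan is to apply the preceding Lemma to $\CC_{\mathcal{L}}$, so the essential task is to verify its hypothesis: that every nonzero element of $\ff_q$ occurs the same number of times in each codeword. First I would fix $R \in \mathcal{L}$ with $Q_R$ of rank $r$ and type $\varepsilon_R$, and observe that the symbol of $c_R$ at position $x \in \ff_{q^m}^*$ is $Q_R(x)$. Thus for $\xi \in \ff_q^*$ the coordinate $t_\xi(c_R)$ equals $\#\{x \in \ff_{q^m}^* : Q_R(x) = \xi\}$; since $Q_R(0) = 0 \ne \xi$, this coincides with $N_{Q_R}(\xi)$, and by \eqref{NQ+-} together with $\nu(\xi) = 1$ for $\xi \ne 0$ it equals $q^{m-1} + \varepsilon_R q^{m-\frac{r}{2}-1}$, independently of the chosen nonzero $\xi$. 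This establishes $t_i(c_R) = t_j(c_R)$ for all $i, j > 0$, which is exactly the equidistribution hypothesis of the Lemma.

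Once the hypothesis holds, the Lemma immediately yields
$$W_{\CC_\mathcal{L}}(z_0,\ldots,z_{q-1}) = \sum_{\ell=0}^{n} A_\ell \, z_0^{n-\ell} z_1^{\frac{\ell}{q-1}} \cdots z_{q-1}^{\frac{\ell}{q-1}},$$
with $n = q^m - 1$ and the frequencies $A_\ell$ read off from Table 1. The only contributing weights are $w_0 = 0$ with frequency $1$ (the codeword $R = 0$), producing the term $z_0^n$, and $w_{1,i} = q^m - q^{m-1} + (-1)^i (q-1) q^{m-\frac{r}{2}-1}$ with frequency $M_{r,i}$, ranging over $r \in R_{\mathcal{L}}$ and $i = 1, 2$.

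To finish, I would substitute $\ell = w_{1,i}$ into the exponents and check they match $a(r,i)$ and $b(r,i)$. Using $q^m - q^{m-1} = (q-1)q^{m-1}$, a direct computation gives $n - \ell = q^{m-1} + (-1)^{i+1}(q-1)q^{m-\frac{r}{2}-1} - 1 = a(r,i)$ and $\frac{\ell}{q-1} = q^{m-1} + (-1)^i q^{m-\frac{r}{2}-1} = b(r,i)$. Substituting these into the displayed formula reproduces the claimed complete weight enumerator exactly. The only nontrivial step is the equidistribution verification in the first paragraph; after that the argument is a mechanical rewriting of Table 1 through the Lemma.
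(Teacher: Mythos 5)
Your proposal is correct and takes exactly the approach the paper intends: the paper states this proposition as a direct consequence of the preceding lemma, and you supply the omitted verification of the equidistribution hypothesis together with the exponent bookkeeping from Table 1. One minor remark: the value you quote for $t_\xi(c_R)$, namely $q^{m-1}+\varepsilon_R q^{m-\frac r2-1}$, inherits a sign typo from the paper's statement of \eqref{NQ+-} (in Lidl--Niederreiter one has $\nu(z)=-1$ for $z\ne 0$, so in fact $t_\xi(c_R)=q^{m-1}-\varepsilon_R q^{m-\frac r2-1}=b(r,i)$), but this does not affect your argument since you only use the $\xi$-independence of $\nu$ there and obtain $b(r,i)$ from $\ell/(q-1)$.
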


\section{The codes associated to $x^{q^\ell +1}$}
Here, we consider the codes $\CC_\mathcal{L}$, $\CC_{\mathcal{L},0}$, $\CC_{\mathcal{L},1}$ and $\CC_{\mathcal{L},2}$ from the previous section but in the particular case of $\mathcal{L} = \langle x^{q\ell} \rangle$, that we denote by $\CC_\ell$, $\CC_{\ell,0}$, 
$\CC_{\ell,1}$ and $\CC_{\ell,2}$. 
We will compute the spectra of these codes using Theorems \ref{Spec C_L} and \ref{Spec C_L,1} and Tables 1--4, but we explicitly compute the rank distribution in $\LL$ and their associated numbers $M_{r,i}$.

\subsubsection*{The codes $\CC_{\ell}$ and $\CC_{\ell,0}$}
Consider the irreducible cyclic code $\CC_{\ell}$ and the code $\CC_{\ell,0}$ over $\mathbb{F}_{q}$, 
with check polynomial $h_{\ell}(x)$ and $h_{\ell}(x)(x-1)$, respectively, where $h_{\ell}$ is the minimal polynomial of 
$\alpha^{-(q^\ell+1)}$, with $\alpha$ a primitive element. 
By Delsarte's Theorem these codes can be described by 
\begin{equation} \label{Cl0}
\begin{split}
\CC_{\ell} & = \Big \{c(\gamma)= \big( \tr_{q^m/q}(\gamma\alpha^{(q^\ell+1)i}) \big)_{i=0}^{n-1} :\gamma\in\mathbb{F}_{q^m} \Big\}, \\
\CC_{\ell,0} & = \Big \{ c_{b}(\gamma) = \big( \tr_{q^m/q}(\gamma\alpha^{(q^\ell+1)i}) +b\big)_{i=0}^{n-1} : 
\gamma \in \mathbb{F}_{q^m},b \in \ff_q \Big \}.
\end{split}
\end{equation}
Note that $c_0(\gamma)=c(\gamma)$ and that $\CC_\ell \subset \CC_{\ell,0}$.

Now we give the parameters and the spectra of these codes. 

\begin{thm}\label{DistCl}
Let $q$ be a prime power and $m, \ell$ positive integers such that {\blue $\ell < \tfrac m2$ and} $m_{\ell}= \frac{m}{(m,\ell)}$ is even. 
Then, $\CC_{\ell}$ is a $[n,m,d]_q$-code with $n=\tfrac{q^m-1}{D}$ and $d=\tfrac 1D q^{\frac m2-1} (q-1) d'$ where $D=q^{(m,\ell)}+1$ and
	$$d'= \begin{cases} 
	q^{\frac m2}-1 				 & \qquad \text{if $\frac 12 m_\ell$ even}, \\[1.5mm]
	q^{\frac m2}-q^{(m,\ell)}		 & \qquad \text{if $\frac 12 m_\ell$ odd}.
	\end{cases}$$
	On the other hand, $\CC_{\ell,0}$ is a $[n,m+1,\hat{d}]_q$-code with  
	$\hat{d} = \tfrac 1D (q^{m-1}(q-1) - \bar{d})$ %}{q^{(m,\ell)}+1}$ % \qquad \text{and} \qquad 
	and 
	$$\bar{d}= \begin{cases}
 				 q^{\frac{m}{2}+(m,\ell)-1}+1 & \qquad \text{if $\tfrac 12 m_{\ell}$ even}, \\[1,5mm] 
				q^{\frac{m}{2}+(m,\ell)-1}(q-1) & \qquad \text{if $\tfrac 12 m_{\ell}$ odd}.
		\end{cases}$$  
%In particular, $q-1$ divides $\hat d$ if $\frac 12 m_\ell$ is odd.
The weight distributions of $\CC_{\ell}$ and $\CC_{\ell,0}$ are given by Tables 5 and 6 below.
\end{thm}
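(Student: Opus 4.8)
The plan is to specialize Theorem~\ref{Spec C_L} (together with Tables 1 and 2) to the one-generator family $\mathcal{L} = \langle x^{q^\ell}\rangle$, where $s=1$, and then pass from the length-$(q^m-1)$ code $\CC_{\mathcal{L}}$ to the genuine cyclic code $\CC_\ell$ of length $n=(q^m-1)/D$ described in \eqref{Cl0}. First I would note that the hypothesis $\ell<\tfrac m2$ is exactly the condition $1\le\ell_1<\tfrac m2$ needed to invoke Theorem~\ref{Spec C_L}, and that, since $m_\ell$ is even, Klapper's Theorems~\ref{Thmpar} and \ref{Thmimpar} force every $Q_{\gamma,\ell}$ to have even rank, so $\mathcal{L}$ is an even rank family and the theorem applies. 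This gives the dimensions $m$ and $m+1$ (the proof of Theorem~\ref{Spec C_L} shows $\dim\CC_{\mathcal{L}}=\dim\CC^{*}_{\mathcal{L}}=sm$, and $\CC_\ell=\CC^{*}_{\mathcal{L}}$ is precisely the length-$n$ code). The length $n=(q^m-1)/D$ with $D=q^{(m,\ell)}+1$ follows because $\delta=\gcd(q^m-1,q^\ell+1)=q^{(m,\ell)}+1$ by Lemma~\ref{mcd}.

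The core step is the rank-and-type distribution of the family. Set $\varepsilon_\ell=(-1)^{m_\ell/2}$ and $r_0=m-2(m,\ell)$. Reading off Theorems~\ref{Thmpar} and \ref{Thmimpar} (the even and odd characteristic statements coincide once phrased through $\varepsilon_\ell$), the nonzero forms $Q_{\gamma,\ell}$ take only the two ranks $r_0$ and $m$, with types governed by the sign of $\varepsilon_\ell$, and with multiplicities $M=\tfrac{q^m-1}{q^{(m,\ell)}+1}$ and $M'=q^{(m,\ell)}M$ supplied by Lemma~\ref{LemaEmes}. Concretely $R_{\mathcal{L}}=\{r_0,m\}$ and, in the notation \eqref{M1r}, one has $M_{r_0,2}=M$ and $M_{m,1}=M'$ when $\varepsilon_\ell=+1$ (i.e.\ $\tfrac12 m_\ell$ even), whereas $M_{r_0,1}=M$ and $M_{m,2}=M'$ when $\varepsilon_\ell=-1$ (i.e.\ $\tfrac12 m_\ell$ odd), all other $M_{r,i}$ vanishing. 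Substituting these two ranks, two types and two multiplicities into Tables 1 and 2, and dividing every weight by $D$ (each word of $\CC_{\mathcal{L}}$ is a $D$-fold concatenation of a word of $\CC_\ell$, so weights scale by $1/D$ while frequencies are preserved), produces Tables 5 and 6.

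It remains to extract the minimum distances, the only place where a genuine comparison is required. For $\CC_\ell$ the two nonzero weights of $\CC_{\mathcal{L}}$ are $q^m-q^{m-1}\pm(q-1)q^{m-r/2-1}$ evaluated at $r\in\{r_0,m\}$ with the sign fixed by the type; the minimum is carried by the word with the minus sign, giving $(q-1)q^{m/2-1}(q^{m/2}-1)$ when $\tfrac12 m_\ell$ is even (minus sign occurring at rank $m$) and $(q-1)q^{m/2-1}(q^{m/2}-q^{(m,\ell)})$ when $\tfrac12 m_\ell$ is odd (minus sign at rank $r_0$), hence the stated $d'$ after the factor $1/D$. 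For $\CC_{\ell,0}$ one compares, among the weights $w_{2,i}$ and $w_{3,i}$ of Table 2, the two minus-sign candidates that actually occur with nonzero frequency: the elementary inequality $q^{(m,\ell)}\ge q$ shows that $w_{3,2}$ at rank $r_0$ beats $w_{2,1}$ at rank $m$ when $\tfrac12 m_\ell$ is even, yielding $\bar d=q^{m/2+(m,\ell)-1}+1$, while the exponent comparison $(q-1)q^{m/2+(m,\ell)-1}>q^{m/2-1}+1$ shows that $w_{2,1}$ at rank $r_0$ beats $w_{3,2}$ at rank $m$ when $\tfrac12 m_\ell$ is odd, yielding $\bar d=q^{m/2+(m,\ell)-1}(q-1)$; in both cases $\hat d=\tfrac1D(q^{m-1}(q-1)-\bar d)$ as claimed.

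The main obstacle is bookkeeping rather than conceptual: one must keep the type/rank pairing straight across both characteristics and both parities of $\tfrac12 m_\ell$, and then correctly single out which of the several weights in Table 2 is globally smallest for $\CC_{\ell,0}$. This last step is the delicate one, since the competing candidates $w_{2,1}$ (at rank $r_0$) and $w_{3,2}$ (at rank $m$) swap roles between the two parity cases, so the comparison must be made explicitly rather than guessed.
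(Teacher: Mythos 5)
Your proposal is correct and follows essentially the same route as the paper: specialize Theorem~\ref{Spec C_L} to $\mathcal{L}=\langle x^{q^\ell}\rangle$, read the rank/type multiplicities $M_{r_0,2}=n$, $M_{m,1}=nq^{(m,\ell)}$ (resp.\@ $M_{r_0,1}=n$, $M_{m,2}=nq^{(m,\ell)}$) off Klapper's Theorems~\ref{Thmpar}--\ref{Thmimpar} and Lemma~\ref{LemaEmes}, rescale weights by $1/D$ via the $D$-fold concatenation, and then compare the resulting entries of Tables 5 and 6 for the minimum distances. Your explicit comparison of $w_{2,1}$ versus $w_{3,2}$ in the two parity cases is the step the paper leaves implicit ("by studying the values in the tables"), and your conclusions agree with the stated $d'$ and $\bar d$.
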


\begin{table}[h]
	\centering
	\caption{Weight distribution of $\CC_{\ell}$.} %, with $D=q^{(m,\ell)}+1$.} 
	\medskip
	\begin{tabular}{|c|c|c|}
		\hline 
		weight & frequency \\ 
		\hline \hline
		$0$ & $1$ \\
		\hline
		$\tfrac{1}{D} \big \{ q^m-q^{m-1}+ (-1)^{\frac 12 m_{\ell}}(q-1)q^{\frac{m}{2}+(m,\ell)-1} \big \}$ %{q^{(m,\ell)}+1}$ 
		& $n$  \\ 
		\hline 
		$\tfrac 1D \big \{ q^m-q^{m-1}+(-1)^{\frac 12 m_{\ell}+1}(q-1)q^{\frac{m}{2}-1} \big \}$ %&{q^{(m,\ell)}+1}$  
		& $ nq^{(m,\ell)}$ \\ 
		\hline 
	\end{tabular}
\end{table}

\renewcommand*{\arraystretch}{1} 
\begin{table}[h]
	\centering
	\caption{Weight distribution of $\CC_{\ell,0}$.} % , with $D=q^{(m,\ell)}+1$.}  
	\medskip
	\begin{tabular}{|c|c|c|}
		\hline 
		weight & frequency \\ 
		\hline \hline
		$0$ & $1$ \\
		\hline
		$\frac{q^m-1}{D}$ 
		&  $q-1$ \\ 
		\hline 
		$\frac 1D \big \{ q^m-q^{m-1}+ (-1)^{\frac 12 m_{\ell}}(q-1)q^{\frac{m}{2}+(m,\ell)-1} \big \} $  
		& $n$  \\ 
		\hline 
		$\tfrac 1D \big \{ q^m-q^{m-1}+(-1)^{\frac 12 m_{\ell}+1}(q-1)q^{\frac{m}{2}-1} \big \}$  
		& $ nq^{(m,\ell)}$ \\ 
		\hline 
		$\tfrac 1D \big \{ q^m-q^{m-1}+(-1)^{\frac 12 m_{\ell}+1}q^{\frac{m}{2}+(m,\ell)-1}-1 \big \} $  
		& $n(q-1)$ \\ 
		\hline 
		$\frac 1D \big \{q^m-q^{m-1}+(-1)^{\frac 12 m_{\ell}}q^{\frac{m}{2}-1}-1 \big \} $ 
		& $nq^{(m,\ell)}(q-1)$ \\ 
		\hline 
	\end{tabular}
\end{table}

\begin{proof}
Let us begin by computing the length $n$ of these codes. 
Since $m_{\ell}$ is even, by Lemma~\ref{mcd} we have that $n=\frac{q^m-1}{q^{(m,\ell)}+1}$. Thus $q^{(m,\ell)}+1\mid q^m-1$ and by 
Lemma~\ref{LemaEmes} we have $n=M$ or $n=M_1=M_2$ in even or odd characteristic respectively,
where $M$, $M_{1}$ and $M_{2}$ are the cardinalities of the sets $S_{q,m}(\ell)$, $X_{q,m}(\ell)$ and $Y_{q,m}(\ell)$ defined in 
\eqref{Sqml} and \eqref{Xqml}. 
Notice that in this case $q^m-1-M=nq^{(m,\ell)}$ in even characteristic and $q^m-1-M_1=q^m-1-M_2=nq^{(m,\ell)}$ in odd characteristic.
	
Let $\mathcal{L}_{\ell} = \langle x^{q^{\ell}} \rangle $, then 
$$R\in\mathcal{L}_{\ell} \qquad \Leftrightarrow \qquad R(x)=\gamma x^{q^\ell} = R_{\gamma}(x) \quad 
\text{ for some } \gamma\in\ff_{q^m}.$$ Thus $ Q_{R}(x)=\tr_{q^m/q}(\gamma x^{q^{\ell}+1})= Q_{\gamma,\ell}(x)$. 

Notice that the codes $\cod_{\mathcal{L}_{\ell}}$ and $\cod_{\mathcal{L}_{\ell},0}$ as in \eqref{codigoCL}, are obtained 
from $(q^m-1,q^{\ell}+1)$-copies of the codes $\cod_{\ell}$ and $\cod_{\ell,0}$ in \eqref{Cl0}, respectively. 
In terms of weights, this mean that 
$$ w(c_{b}(\gamma)) = \frac{w(c_{R_{\gamma},b})}{(q^m-1,q^{\ell}+1)}.$$
	
By Theorems \ref{Thmpar} and \ref{Thmimpar}, $\mathcal{L}_{\ell}$ is an even rank family. 
Furthermore, $R_{\mathcal{L}_{\ell}}=\{0, m, \, m-2(m,\ell)\}.$ If $q$ is even, by Theorem \ref{Thmpar} we have that 
$$\begin{tabular}{llll}
$M_{m,2} = M_{m-2(m,\ell),1} = 0$, & \quad $M_{m,1}=nq^{(m,\ell)}$, &\quad $M_{m-2(m,\ell),2}=n$, &\qquad  if $\tfrac{1}{2} m_{\ell}$ even, 
	\\[2mm]
	$M_{m,1}=M_{m-2(m,\ell),2}=0$, & \quad $M_{m-2(m,\ell),1}=n$, & \quad $M_{m,2}=nq^{(m,\ell)}$,  & \qquad if $\tfrac{1}{2} m_{\ell}$ odd.
\end{tabular}$$
Similarly, if $q$ is odd, Theorem \ref{Thmimpar} implies that 
	$$\begin{tabular}{llll}
	$M_{m,2} = M_{m-2(m,\ell),1} = 0,$ &\quad $M_{m,1}=nq^{(m,\ell)}$, &\quad $M_{m-2(m,\ell),2}=n$, &\qquad  if $\tfrac{1}{2} m_{\ell}$ even, 
	\\[2mm]
	$M_{m,1}=M_{m-2(m,\ell),2}=0$, &\quad $M_{m-2(m,\ell),1}=n$, &\quad $M_{m,2}=nq^{(m,\ell)}$, & \qquad  if $\tfrac{1}{2} m_{\ell}$ odd.
	\end{tabular}$$
Now, by Theorem \ref{Spec C_L}, we obtain the weights and frequencies given in Tables 5 and 6. 
Finally, by studying the values in the tables, we get the minimal distances for both codes. 
\end{proof}

%\begin{rem}
	%By taking into account the singleton bound $k+d\le n+1$ of a $[n,k,d]$-code. One can ask if in the family $\CC_{\ell}$ there are some codes which satisfy the equality in the singleton bound, i.e, if it is a MDS code.
	%
	%Suppose that $\frac 12 m_{\ell}$ even, by last theorem the parameters of the code $\CC_{\ell}$ are $n=\frac{q^m-1}{q^{(m,\ell)}+1}$, $k=m$ and $d=\frac{(q-1)q^{\frac m2 -1}(q^{\frac m2}-1)}{q^{(m,\ell)}+1}$.
	%If we assume that equality holds in the singleton bound, after some calculation it is not hard to see that necessarily 
	%\begin{equation}\label{cond MDS}
	%m(q^{(m,\ell)}+1)\ge q^{m-1}.
	%\end{equation}
	%The same occurs in the case $\frac 12 m_{\ell}$ odd.
	%The only parameters which satisfy \eqref{cond MDS} are $q=2$, $m=4$ with $\ell=1$ or $\ell=3$. 
	%In fact, if we choose those parameters  then 
	%$n=5$, $k=4$ and $d=2$, clearly they satisfy the equality in the singleton bound.  
%\end{rem}

\begin{coro} \label{cwe Cl}
	Under the same hypothesis as before, the complete weight enumerator of $C_{\ell}$ is 
		$$W_{\CC_{\ell}}(z_0,\ldots,z_{q-1}) = 
		z_{0}^{n} + n z_{0}^{a_0} z_{1}^{a_{1}} \cdots z_{q-1}^{a_{1}}+nq^{(m,\ell)}z_{0}^{a'_0}z_{1}^{a'_{1}}\cdots z_{q-1}^{a'_{1}}$$
		where
		$$\begin{array}{ll}
		a_0 = n-\tfrac{(q-1)q^{m-1}}{q^{(m,\ell)}+1} (1 + (-1)^{\frac 12 m_\ell} \, q^{{(m,\ell)}-\frac{m}{2}}), & \qquad 
		a_1 =	\tfrac{q^{m-1}}{q^{(m,\ell)}+1}(1 + (-1)^{\frac 12 m_\ell} \, q^{{(m,\ell)}-\frac{m}{2}}), \medskip \\[2mm]
		a'_0 = n-\tfrac{(q-1)q^{m-1}}{q^{(m,\ell)}+1}(1 +(-1)^{\frac{1}{2}m_{\ell}+1}\, q^{-\frac{m}{2}}), & \qquad
		a'_1 = \tfrac{q^{m-1}}{q^{(m,\ell)}+1}(1 +(-1)^{\frac{1}{2}m_{\ell}+1} \, q^{-\frac{m}{2}}).
		\end{array}$$
\end{coro}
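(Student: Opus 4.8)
The plan is to apply the Lemma on complete weight enumerators (the one immediately preceding Proposition~\ref{cweCL}) to the code $\CC_\ell$, for which the full spectrum is already in hand from Theorem~\ref{DistCl} and Table~5. The essential point is that $\CC_\ell$ satisfies the symmetry hypothesis $t_i(c)=t_j(c)$ for all $i,j>0$ and all codewords $c$, so that the complete weight enumerator is determined entirely by the Hamming weight distribution. Once this is verified, the three terms of the enumerator correspond bijectively to the three rows of Table~5 (the zero word, the $n$ words of one nonzero weight, and the $nq^{(m,\ell)}$ words of the other), and the exponents $a_0,a_1,a_0',a_1'$ are obtained by the substitution prescribed in the Lemma, namely $z_0^{n-w}z_1^{w/(q-1)}\cdots z_{q-1}^{w/(q-1)}$ for a word of weight $w$.

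First I would check the symmetry condition. For $\CC_\ell$ a codeword is $c(\gamma)=(\tr_{q^m/q}(\gamma\alpha^{(q^\ell+1)i}))_i$, and one must show that for each $\gamma$ the values $\tr_{q^m/q}(\gamma x^{q^\ell+1})$, as $x$ ranges over a set of coset representatives for $S_{q,m}(\ell)$ in $\ff_{q^m}^*$, take each nonzero element of $\ff_q$ equally often. This follows from Lemma~\ref{Result Kl} (or directly from \eqref{NQ+-}): since $Q_{\gamma,\ell}$ has even rank, the fiber count $N_{Q}(\xi)=q^{m-1}+\varepsilon_Q\nu(\xi)q^{m-r/2-1}$ depends on $\xi\in\ff_q^*$ only through $\nu(\xi)=1$, so all nonzero values are equinumerous at the level of $\ff_{q^m}^*$; dividing by the constant $|S_{q,m}(\ell)|=D$ (using Lemma~\ref{mcd} and Lemma~\ref{LemaEmes}) preserves this balance for the shortened words of $\CC_\ell$. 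Hence each $t_i$, $i>0$, equals $\tfrac{w(c)}{q-1}$, and the Lemma applies.

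Second I would read off the exponents. From Table~5 the two nonzero weights are $w=\tfrac1D\{q^m-q^{m-1}+(-1)^{\frac12 m_\ell}(q-1)q^{\frac m2+(m,\ell)-1}\}$ with frequency $n$, and $w'=\tfrac1D\{q^m-q^{m-1}+(-1)^{\frac12 m_\ell+1}(q-1)q^{\frac m2-1}\}$ with frequency $nq^{(m,\ell)}$. Setting $a_1=w/(q-1)$ and $a_1'=w'/(q-1)$ and simplifying, using $q^m-q^{m-1}=(q-1)q^{m-1}$ and $D=q^{(m,\ell)}+1$, gives exactly the claimed $a_1,a_1'$; then $a_0=n-w=n-(q-1)a_1$ and $a_0'=n-w'=n-(q-1)a_1'$ produce the stated $a_0,a_0'$. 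This is a routine substitution with no conceptual content once the Lemma is invoked.

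The only genuine obstacle is the verification of the balancedness hypothesis $t_i=t_j$ for $i,j>0$; everything after that is bookkeeping. I would stress that this balancedness is precisely the statement that $\CC_\ell$ is $(q-1)$-divisible, which is already noted for $\CC_\LL$ in Remark~\ref{rho weight}(i), so in fact the needed input is available: the divisibility forces $w(c)/(q-1)$ to be an integer, and the equidistribution of nonzero trace values from \eqref{NQ+-} upgrades this to the full symmetry required by the Lemma. Thus the corollary follows immediately by specializing Proposition~\ref{cweCL} to $\LL=\langle x^{q^\ell}\rangle$, where $R_\LL=\{0,m,m-2(m,\ell)\}$ and the numbers $M_{r,i}$ computed in the proof of Theorem~\ref{DistCl} collapse the double sum into the two displayed monomials.
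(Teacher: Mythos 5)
Your proposal is correct and follows the same route the paper intends: the corollary is stated without proof precisely because it is the Lemma on complete weight enumerators applied to the spectrum of $\CC_\ell$ in Table~5, with the balancedness hypothesis $t_i(c)=t_j(c)$ for $i,j>0$ supplied by \eqref{NQ+-} (all nonzero values of an even-rank form are equidistributed over $\ff_{q^m}$) together with the observation that the full-length word over $\ff_{q^m}^*$ repeats each entry of $c(\gamma)$ exactly $D$ times, and the exponent bookkeeping you carry out matches the stated $a_0,a_1,a_0',a_1'$. Two cosmetic quibbles only: the index set of $c(\gamma)$ is the subgroup $S_{q,m}(\ell)$ itself (equivalently, coset representatives of the group of $(q^\ell+1)$-th roots of unity), not coset representatives of $S_{q,m}(\ell)$ in $\ff_{q^m}^*$; and the closing appeal should be to the Lemma applied directly to $\CC_\ell$ rather than to a specialization of Proposition~\ref{cweCL}, since that proposition concerns the length-$(q^m-1)$ code $\CC_{\LL}$ while the corollary concerns the length-$n$ code $\CC_\ell$.
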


\begin{exam} \label{ej 43}
	Let $q=2$, $m=8$ and $\ell=1$. %, hence $D=3$. 
By Theorems \ref{Spec C_L} and \ref{Spec C_L,1} the codes $\CC_\ell$ and $\CC_{\ell,0}$ have paremeters $[85,8,40]$ and $[85,9,37]$ respectively, with weight enumerators given by 
\begin{align*}
W_{\CC_{\ell}}(x) = 1+ 170 x^{120} + 85x^{144} \quad \text{and} \quad 
W_{\CC_{\ell,0}}(x) = 1+ 85 x^{37} + 170x^{40} + 170x^{45} + 85x^{48} + x^{85}.
\end{align*}

%($ii$) Let $q=3$, $m=6$ and $\ell=2$. %, hence $D=3$. 
%By Theorems \ref{Spec C_L} and \ref{Spec C_L,1} the codes $\CC_\ell$ and $\CC_{\ell,0}$ have paremeters $[85,8,40]$ and $[85,9,37]$ respectively, with weight enumerators given by 
%\begin{align*}
%& W_{\CC_{\ell}}(x) = 1+ 170 x^{120} + 85x^{144}, \\ % \qquad \text{and} \qquad 
%& W_{\CC_{\ell,0}}(x) = 1+ 85 x^{37} + 170x^{40} + 170x^{45} + 85x^{48} + x^{85}.
%\end{align*}
\end{exam}

\subsubsection*{The codes $\CC_{\ell,1}$ and $\CC_{\ell,2}$}
Consider the codes $\CC_{\ell,1}$ and $\CC_{\ell,2}$ over $\mathbb{F}_{q}$, 
with check polynomials $h_{\ell}(x)h_1(x)$ and $h_{\ell}(x)h_1(x)(x-1)$, respectively. Here, $h_{\ell}$ and $h_{1}(x)$ are the minimal polynomials of $\alpha^{-(q^\ell+1)}$ and $\alpha^{-1}$ respectively, where $\alpha$ is a primitive element of $\ff_{q^m}$. 
By Delsarte's Theorem, these codes are given by 
\begin{equation} \label{Cl1}
\begin{split}
\CC_{\ell,1} &= \Big \{c(\gamma,\beta)= \big( \tr_{q^m/q}(\gamma x^{q^\ell+1}+\beta x) \big)_{x\in\ff_{q^m}^*} : 
\gamma,\beta\in\mathbb{F}_{q^m} \Big\}, \\
\CC_{\ell,2} &= \Big \{c_{b}(\gamma,\beta)= \big( \tr_{q^m/q}(\gamma x^{q^\ell+1}\beta x) +b\big)_{x\in\ff_{q^m}^*} :\gamma,\beta\in\mathbb{F}_{q^m},b\in\ff_q \Big\}.
\end{split}
\end{equation}
%These codes have length $n=q^m-1$.
As before, for $m,\ell$ positive integers such that $m/(m,\ell)$ even we denote 
$n=\tfrac{q^m-1}{q^{(m,\ell)}+1}.$
We now give the parameters and the spectra of these codes.

\begin{thm}\label{Dist Cl1}
Let $q$ be a prime power and $m, \ell$ positive integers such that $m_\ell$ is even. 
Then, $\CC_{\ell,1}$ is a $[N,2m,d]_q$-code with $N=q^m-1$ and $d=q^{m-1}(q-1)-d'$ with
$$d'= \begin{cases} 
 q^{\frac{m}{2}+(m,\ell)-1} 				 & \qquad \text{if $\frac 12 m_\ell$ even}, \\[1.5mm]
 (q-1)q^{\frac{m}{2}+(m,\ell)-1}		 & \qquad \text{if $\frac 12 m_\ell$ odd}
\end{cases}$$
%On the other hand, 
and $\CC_{\ell,2}$ is a $[N,2m+1,d-1]_q$-code. % where $d$ is as above. 
%with  
%$\hat{d} = q^{m-1}(q-1) - \bar{d}$ %}{q^{(m,\ell)}+1}$ % \qquad \text{and} \qquad 
%and 
%$$\bar{d}= \begin{cases}
%q^{\frac{m}{2}+(m,\ell)-1}+1 & \qquad \text{if $\tfrac 12 m_{\ell}$ even}, \\[1,5mm] 
%(q-1)q^{\frac{m}{2}+(m,\ell)-1}+1 & \qquad \text{if $\tfrac 12 m_{\ell}$ odd}.
%\end{cases}$$  
%In particular, $q-1$ divides $d$ if $\frac 12 m_\ell$ is odd.
The weight distributions of the codes 
$\CC_{\ell,1}$ and $\CC_{\ell,2}$ are given by Tables 7 and 8 below. 
\end{thm}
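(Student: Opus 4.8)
The plan is to reduce the computation of the spectra of $\CC_{\ell,1}$ and $\CC_{\ell,2}$ to a direct application of Theorem~\ref{Spec C_L,1} (Tables 3 and 4) for the particular even rank family $\LL_\ell = \langle x^{q^\ell} \rangle$, exactly as was done for $\CC_\ell$ and $\CC_{\ell,0}$ in Theorem~\ref{DistCl}. The only genuinely new input needed is the explicit rank-and-type distribution of the family of quadratic forms $Q_{\gamma,\ell}(x) = \tr_{q^m/q}(\gamma x^{q^\ell+1})$, namely the numbers $M_{r,i}$ of \eqref{M1r}, since once these are known the weights and frequencies in Tables 7 and 8 are obtained by substituting into the general formulas $w(c_R(\beta)) = q^m - q^{m-1} - \tfrac1q S_{Q_R}(\beta)$ and $w(c_{R,b}(\beta)) = q^m - q^{m-1} - 1 - \tfrac1q S_{Q_R,b}(\beta)$ derived in the proof of Theorem~\ref{Spec C_L,1}.

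First I would record that $\LL_\ell$ is an even rank family with $R_{\LL_\ell} = \{0,\, m,\, m-2(m,\ell)\}$ and recall, precisely as in the proof of Theorem~\ref{DistCl}, the values of the $M_{r,i}$ split according to the parity of $\tfrac12 m_\ell$ and the parity of the characteristic. These come directly from Klapper's distributions (Theorems~\ref{Thmpar} and \ref{Thmimpar}) together with the cardinality count of Lemma~\ref{LemaEmes}: in each case two of the four numbers $M_{m,1}, M_{m,2}, M_{m-2(m,\ell),1}, M_{m-2(m,\ell),2}$ vanish, one equals $n$ and one equals $nq^{(m,\ell)}$, where $n = \tfrac{q^m-1}{q^{(m,\ell)}+1}$. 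Here I must be careful that the length parameter is now $N = q^m-1$ rather than $n$, since $\CC_{\ell,1}$ and $\CC_{\ell,2}$ are indexed by all of $\ff_{q^m}^*$ without the $(q^m-1,q^\ell+1)$-fold repetition that collapsed $\CC_{\LL_\ell}$ onto $\CC_\ell$; consequently I work directly with $\CC_{\LL_\ell,1} = \CC_{\ell,1}$ and $\CC_{\LL_\ell,2} = \CC_{\ell,2}$ and no rescaling of weights is required.

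Next I would substitute these $M_{r,i}$ into Tables 3 and 4. Since for each fixed parity of $\tfrac12 m_\ell$ exactly one rank-$m$ type and one rank-$(m-2(m,\ell))$ type survive, the generic two-index rows of Tables 3 and 4 collapse to a small number of explicit rows, and the sign $(-1)^{i+1} = \varepsilon_R$ is determined in each surviving case; replacing $\tfrac r2$ by $\tfrac m2$ or $\tfrac m2 - (m,\ell)$ yields the exponents $q^{\frac m2 + (m,\ell)-1}$ and $q^{\frac m2 -1}$ appearing in Tables 7 and 8. The dimension claims, $\dim \CC_{\ell,1} = 2m$ and $\dim \CC_{\ell,2} = 2m+1$, follow from Theorem~\ref{Spec C_L,1} with $s=1$ (one needs $\ell < \tfrac m2$ so that Lemma~\ref{dim} applies and $\alpha^{-1}, \alpha^{-(q^\ell+1)}$ are non-conjugate with cyclotomic cosets of full size $m$), or equivalently by checking that the frequencies sum to $q^{2m}$ and $q^{2m+1}$ as in Remark~\ref{rho weight}(ii).

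Finally I would read off the minimum distance $d$ by comparing the nonzero weights and isolating the smallest one, which amounts to determining the sign of the dominant correction term $q^{\frac m2 + (m,\ell)-1}$ and thereby produces the two cases for $d'$ according to the parity of $\tfrac12 m_\ell$; the claim that $\CC_{\ell,2}$ is a $[N,2m+1,d-1]_q$-code then follows because adding the constant $b \in \ff_q^*$ shifts the relevant weights down by exactly $1$, as visible from the $b\neq 0$ branch of the weight formula. I expect the main obstacle to be purely bookkeeping rather than conceptual: one must track the interaction between the parity of $\tfrac12 m_\ell$ (which fixes $\varepsilon_\ell$ and hence the type), the parity of the characteristic (which selects between Theorems~\ref{Thmpar} and \ref{Thmimpar}), and the sign conventions $\varepsilon_R = (-1)^{i+1}$, so as to be certain that each surviving row of Tables 7 and 8 carries the correct sign and frequency; the characteristic-even and characteristic-odd computations must be shown to yield identical tables, which is exactly what the matching $M_{r,i}$ in the two displays guarantee.
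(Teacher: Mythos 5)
Your proposal is correct and follows essentially the same route as the paper: the paper's proof simply observes that $\CC_{\ell,1}=\CC_{\mathcal{L}_{\ell},1}$ and $\CC_{\ell,2}=\CC_{\mathcal{L}_{\ell},2}$ for $\mathcal{L}_{\ell}=\langle x^{q^{\ell}}\rangle$, invokes Theorem~\ref{Spec C_L,1}, and reuses the numbers $M_{r,i}$ and the set $R_{\mathcal{L}_{\ell}}$ already computed in the proof of Theorem~\ref{DistCl}. Your additional remarks --- that no weight rescaling by $(q^m-1,q^{\ell}+1)$ is needed here since the length is $N=q^m-1$, and that the dimension count implicitly uses $\ell<\tfrac m2$ via Lemma~\ref{dim} --- are accurate and consistent with the paper's argument.
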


\renewcommand*{\arraystretch}{1}
\begin{table}[!ht]
	\centering
	\caption{ Weight distribution of $\CC_{\ell,1}$.} 
	\medskip
	\resizebox{16cm}{!}{\begin{tabular}{|c|c|c|}
			\hline 
			weight & frequency \\ 
			\hline \hline
			0& 1 \\
			\hline
			$q^m-q^{m-1}$ &  $n(q^m-q^{m-2{(m,\ell)}}) + q^m-1$ \\ 
			\hline 
			$q^m-q^{m-1}+(-1)^{\frac{1}{2}m_{\ell}+1}(q-1)q^{\frac{m}{2}-1}$ &  $nq^{(m,\ell)} (q^{m-1}+(-1)^{\frac{1}{2}m_{\ell}}(q-1)q^{\frac{m}{2}-1})$ \\ 
			\hline 
			$q^m-q^{m-1}+(-1)^{\frac{1}{2}m_{\ell}}(q-1)q^{\frac{m}{2}+{(m,\ell)}-1}$  & $n(q^{m-1-2{(m,\ell)}}+(-1)^{\frac{1}{2}m_{\ell}+1}(q-1)q^{\frac{m}{2}-{(m,\ell)}-1})$ \\ 
			\hline  
			$q^m-q^{m-1}+(-1)^{\frac{1}{2}m_{\ell}}q^{\frac{m}{2}-1}$ & $nq^{(m,\ell)}(q^{m-1}+(-1)^{\frac{1}{2}m_{\ell}+1}q^{\frac{m}{2}-1})(q-1)$ \\ 
			\hline
			$q^m-q^{m-1}+(-1)^{\frac{1}{2}m_{\ell}+1}q^{\frac{m}{2}+{(m,\ell)}-1}$ & $n(q^{m-1-2{(m,\ell)}}+(-1)^{\frac{1}{2}m_{\ell}}q^{\frac{m}{2}-{(m,\ell)}-1})(q-1)$ \\ 
			\hline 
			
	\end{tabular}}
\end{table}

\renewcommand*{\arraystretch}{1}
\begin{table}[!ht]
	\centering
	\caption{Weight distribution of $\CC_{\ell,2}$.} 
	\medskip
	\resizebox{16cm}{!}{\begin{tabular}{|c|c|c|}
			\hline 
			weight & frequency \\ 
			\hline \hline
			0& 1 \\
			\hline
			$q^m-q^{m-1}-1$ &  $n(q^m-q^{m-2{(m,\ell)}})(q-1)+(q^m-1)(q-1)$ \\ 
			\hline 
			$q^m-q^{m-1}+(-1)^{\frac{1}{2}m_{\ell}+1}(q-1)q^{\frac{m}{2}-1}-1$ &  $nq^{(m,\ell)}(q^{m-1}+(-1)^{\frac{1}{2}m_{\ell}+1}q^{\frac{m}{2}-1})(q-1)$ \\ 
			\hline 
			$q^m-q^{m-1}+(-1)^{\frac{1}{2}m_{\ell}}(q-1)q^{\frac{m}{2}+{(m,\ell)}-1}-1$  & $n(q^{m-1-2{(m,\ell)}}+(-1)^{\frac{1}{2}m_{\ell}}q^{\frac{m}{2}-{(m,\ell)}-1})(q-1)$ \\ 
			\hline 
			$q^m-q^{m-1}+(-1)^{\frac{1}{2}m_{\ell}}q^{\frac{m}{2}-1}-1$ & $nq^{(m,\ell)}(q^m-q^{m-1}+(-1)^{\frac{1}{2}m_{\ell}}q^{\frac{m}{2}-1})(q-1)$ \\ 
			\hline 
			$q^m-q^{m-1}+(-1)^{\frac{1}{2}m_{\ell}+1}q^{\frac{m}{2}+{(m,\ell)}-1}-1$ & $n(q^{m-2{(m,\ell)}}-q^{m-1-2{(m,\ell)}}+(-1)^{\frac{1}{2}m_{\ell}+1}q^{\frac{m}{2}-{(m,\ell)}-1})(q-1)$ \\ 
			\hline
			$q^m-1$ & $q-1$\\
			\hline 
			$q^m-q^{m-1}$ &  $n(q^m-q^{m-2{(m,\ell)}}) + q^m-1$ \\ 
			\hline 
			$q^m-q^{m-1}+(-1)^{\frac{1}{2}m_{\ell}+1}(q-1)q^{\frac{m}{2}-1}$ &  $nq^{(m,\ell)} (q^{m-1}+(-1)^{\frac{1}{2}m_{\ell}}(q-1)q^{\frac{m}{2}-1})$ \\ 
			\hline 
			$q^m-q^{m-1}+(-1)^{\frac{1}{2}m_{\ell}}(q-1)q^{\frac{m}{2}+{(m,\ell)}-1}$  & $n(q^{m-1-2{(m,\ell)}}+(-1)^{\frac{1}{2}m_{\ell}+1}(q-1)q^{\frac{m}{2}-{(m,\ell)}-1})$ \\ 
			\hline  
			$q^m-q^{m-1}+(-1)^{\frac{1}{2}m_{\ell}}q^{\frac{m}{2}-1}$ & $nq^{(m,\ell)}(q^{m-1}+(-1)^{\frac{1}{2}m_{\ell}+1}q^{\frac{m}{2}-1})(q-1)$ \\ 
			\hline
			$q^m-q^{m-1}+(-1)^{\frac{1}{2}m_{\ell}+1}q^{\frac{m}{2}+{(m,\ell)}-1}$ & $n(q^{m-1-2{(m,\ell)}}+(-1)^{\frac{1}{2}m_{\ell}}q^{\frac{m}{2}-{(m,\ell)}-1})(q-1)$ \\ 
			\hline 
	\end{tabular}}
\end{table}

\begin{proof}
Note that $\CC_{\ell, 1}=\CC_{\mathcal{L}_{\ell},1}$ and $\CC_{\ell,2} = \CC_{\mathcal{L}_{\ell},2}$ with 
$\mathcal{L}_{\ell} = \langle x^{q^{\ell}}\rangle$ where $\CC_{\mathcal{L}_{\ell},1}, \CC_{\mathcal{L}_{\ell},2}$ are the codes defined in \eqref{codigoCL0}. Then, by Theorem \ref{Spec C_L,1}, it is enough to compute the numbers $M_{r,1}$, $M_{r,2}$ and the set 
$R_{\mathcal{L}_{\ell}}$. They have been calculated in the proof of the Theorem \ref{DistCl}. Therefore, the Tables 7 and 8 give us the spectra of the codes $\CC_{\ell,1}$ and $\CC_{\ell,2}$ as we wanted.
\end{proof}

\begin{exam}
	Let $q=2$, $m=8$ and $\ell=1$ as in Example \ref{ej 43}. %, hence $D=3$. 
	By Theorem \ref{Dist Cl1}, the codes $\CC_{\ell,1}$ and $\CC_{\ell,2}$ have paremeters $[255,16,112]$ and $[255,17,111]$, respectively. Also, we have %Their weight enumerators are given by 
\begin{align*}
& W_{\CC_{\ell,1}}(x) = 1 + 3060 \, x^{112} + 23120 \, x^{120} + 16575 \, x^{128} + 20400 \, x^{136} + 2380 \, x^{144} \\
& \begin{multlined}
W_{\CC_{\ell,2}}(x) = 1 + 2380 \, x^{111} + 3060 \, x^{112} + 20400 \, x^{119} + 23120 \, x^{120} + 16575 \, x^{127} \\  + 16575 \, x^{128} + 23120 \, x^{135} + 20400\, x^{136} + 3060 \, x^{143}  + 2380 \, x^{144} + x^{255}.
\end{multlined}
\end{align*}
\end{exam}

\begin{rem}
($i$) From Tables 5--8 we see that $\CC_\ell$ is a $2$-weight code, $\CC_{\ell,0}$ and $\CC_{\ell,1}$ are 
$5$-weight codes and $\CC_{\ell,2}$ is an $11$-weight code. 
Also, one checks that $\CC_\ell$ is $q^{\frac m2 -1}(q-1)$-divisible and $\CC_{\ell,1}$ is $q^{\frac m2 -1}$-divisible.
These facts are in accordance with Klapper's Theorems \ref{Thmpar} and \ref{Thmimpar} and Remark~\ref{rho weight}.

($ii$) In the binary case (i.e.\@ $q=2$), the codes $\CC_{\LL,0}$ and $\CC_{\LL,2}$ have symmetric spectrum, that is $A_i=A_{n-i}$ for every $i$, since the word $11 \cdots 11$ is in these codes (there is a weight $w=n$). 
\end{rem}

\begin{rem}
It can be shown, via Pless power moments, that if $q=2$ and $(m,\ell)=1$ the dual code of $\CC_{\ell,1}$ is optimal in the sense that its minimal distance is maximum in the class of cyclic codes with generator polynomial $m_{\alpha}(x)m_{\alpha^{t}}(x)$ over 
$\ff_{2}$. This condition of optimality is equivalent to the function $f(x)=x^t$ defined over $\ff_{2^m}$ 
being an APN function (see \cite{Ch}). In our case, $f_{\ell}(x) = x^{2^{\ell}+1}$ with $(m,\ell)=1$, is a well-known APN function, namely the Kasami--Gold function.
\end{rem}

\section{Codes associated to $\mathcal{L}_{\ell,3\ell}$}
In this section we consider the codes $\CC_{\LL}$, $\CC_{\LL,0}$, $\CC_{\LL,1}$ and $\CC_{\LL,2}$ associated to the family of 
$p$-linearized polynomials 
$$\LL = \mathcal{L}_{\ell,3 \ell} = \langle x^{p^\ell}, x^{p^{3\ell}} \rangle \subset \mathbb{F}_{p^{m}}[x]$$ 
where $p$ is an odd prime and $m_{\ell} = m/(m,\ell)$ even. The next theorem summarizes, in our notation, the results proved in 
\cite{ZWZH1}. 

\begin{thm}[\cite{ZWZH1}] \label{ranks l3l}
Let $p$ be an odd prime and let $m,\ell$ be non-negative integers such that $m_{\ell}=m/(m,\ell)$ is even with $m>6\ell$ and 
denote $\delta=(m,\ell)$. 
Then, $\mathcal{L}_{\ell,3\ell}$ is an even rank family with 
$R_{\mathcal{L}_{\ell,3\ell}}=\{m, m-2\delta,m-4\delta,m-6\delta\}$ $($see \eqref{LR}$)$.
Moreover, the numbers $M_{r,i}$, as defined in \eqref{M1r}, 
have the following expressions: 
\begin{enumerate}[(a)]
\item If $\tfrac{1}{2} m_{\ell}$ is odd, then $M_{m,1} = M_{m-2\delta,2} = M_{m-4\delta,1} = M_{m-6\delta,2} = 0$ and 
 \begin{align*}
		M_{m,2} &= \tfrac{p^{2m+6\delta}-p^{2m+4\delta}-p^{2m+\delta}+p^{m+4\delta}+p^{m+\delta}-p^{6\delta}-p^{\frac{3m}{2}+5\delta}+p^{\frac{3m}{2}+4\delta}+p^{\frac{m}{2}+5\delta}-p^{\frac{m}{2}+4\delta}}{p^{6\delta}+p^{5\delta}-p^{4\delta}+p^{2\delta}-p^{\delta}-1}, \\[2mm]
		M_{m-2\delta,1} & = \tfrac{p^{2m-2\delta}(p^{7\delta}-p^{2\delta}-1)+p^{m-2\delta}(p^{5\delta}-p^{6\delta}+p^{2\delta}+1)-p^{3\delta}(p^{2\delta}-p^{\delta}+1)+(p^{\frac{3m}{2}}-p^{\frac{m}{2}})(\sum_{i=0}^{5}(-1)^{i+1}p^{i\delta})}{p^{6\delta}+p^{5\delta}-p^{4\delta}+p^{2\delta}-p^{\delta}-1}, \\[2mm]
		M_{m-4\delta,2} & = \tfrac{p^{2m-3\delta}(p^{5\delta}+p^{\delta}-1)-p^{m-3\delta}(p^{6\delta}+p^{4\delta}+p^{\delta}-1)+p^{\delta}(p^{2\delta}-p^{\delta}+1)-(p^{\frac{3m}{2}-2\delta}-p^{\frac{m}{2}-2\delta})(\sum_{i=0}^{5}(-1)^{i+1}p^{i\delta})}{p^{6\delta}+p^{5\delta}-p^{4\delta}+p^{2\delta}-p^{\delta}-1}, \\[2mm]
		M_{m-6\delta,1} & = \tfrac{p^{2m-3\delta}-p^{m}-p^{m-3\delta}+1+p^{\frac{3m}{2}-\delta}-p^{\frac{3m}{2}-2\delta}-p^{\frac{m}{2}-\delta}+p^{\frac{m}{2}-2\delta}}{p^{6\delta}+p^{5\delta}-p^{4\delta}+p^{2\delta}-p^{\delta}-1}.
		\end{align*}

\item If $\tfrac{1}{2} m_{\ell}$ is even, then $M_{m,2}=M_{m-2\delta,1}=M_{m-4\delta,2}=M_{m-6\delta,1}=0$ and
		\begin{align*}
		M_{m,2} & = \tfrac{p^{2m+6\delta}-p^{2m+4\delta}-p^{2m+\delta}+p^{m+4\delta}+p^{m+\delta}-p^{6\delta}+p^{\frac{3m}{2}+5\delta}-p^{\frac{3m}{2}+4\delta}-p^{\frac{m}{2}+5\delta}+p^{\frac{m}{2}+4\delta}}{p^{6\delta}+p^{5\delta}-p^{4\delta}+p^{2\delta}-p^{\delta}-1}, 
		\\[2mm]
		M_{m-2\delta,2} & = \tfrac{p^{2m-2\delta}(p^{7\delta}-p^{2\delta}-1)+p^{m-2\delta}(p^{5\delta}-p^{6\delta}+p^{2\delta}+1)-p^{3\delta}(p^{2\delta}-p^{\delta}+1)-(p^{\frac{3m}{2}}-p^{\frac{m}{2}})(\sum_{i=0}^{5}(-1)^{i+1}p^{i\delta})}{p^{6\delta}+p^{5\delta}-p^{4\delta}+p^{2\delta}-p^{\delta}-1}, \\[2mm]
		M_{m-4\delta,1} & = \tfrac{p^{2m-3\delta}(p^{5\delta}+p^{\delta}-1)-p^{m-3\delta}(p^{6\delta}+p^{4\delta}+p^{\delta}-1)+p^{\delta}(p^{2\delta}-p^{\delta}+1)+(p^{\frac{3m}{2}-2\delta}-p^{\frac{m}{2}-2\delta})(\sum_{i=0}^{5}(-1)^{i+1}p^{i\delta})}{p^{6\delta}+p^{5\delta}-p^{4\delta}+p^{2\delta}-p^{\delta}-1}, \\[2mm]
		M_{m-6d,2} & = \tfrac{p^{2m-3\delta}-p^{m}-p^{m-3\delta}+1-p^{\frac{3m}{2}-\delta}+p^{\frac{3m}{2}-2\delta}+p^{\frac{m}{2}-\delta}-p^{\frac{m}{2}-2\delta}}{p^{6\delta}+p^{5\delta}-p^{4\delta}+p^{2\delta}-p^{\delta}-1}.
		\end{align*}
	\end{enumerate}
\end{thm}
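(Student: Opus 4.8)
The plan is to reduce everything to the kernel of a single self-adjoint linearized map, and then to a short system of power-moment identities. Write $R\in\LL=\mathcal{L}_{\ell,3\ell}$ as $R(x)=ax^{p^\ell}+bx^{p^{3\ell}}$ with $a,b\in\ff_{p^m}$, so that $Q_R(x)=\tr_{p^m/p}(ax^{p^\ell+1}+bx^{p^{3\ell}+1})$ as in \eqref{QRx}. Polarizing and using that $R$ is additive, the associated symmetric bilinear form is $B(x,y)=\tr_{p^m/p}(y\,L(x))$ with the self-adjoint map $L(x)=ax^{p^\ell}+a^{p^{-\ell}}x^{p^{-\ell}}+bx^{p^{3\ell}}+b^{p^{-3\ell}}x^{p^{-3\ell}}$, and by the characterization of the rank via the radical $V$ in Section~2 one has $r=\operatorname{rank}Q_R=m-\dim_{\ff_p}\ker L$. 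Since $\delta=(m,\ell)$ divides $\ell$ (hence also $3\ell$), every Frobenius shift occurring in $L$ fixes $\ff_{p^\delta}$, so $L$ is $\ff_{p^\delta}$-linear and $\dim_{\ff_p}\ker L$ is a multiple of $\delta$. First I would show that $\LL$ is an even rank family and that $\dim_{\ff_p}\ker L\in\{0,2\delta,4\delta,6\delta\}$; the step $2\delta$ (rather than $\delta$) and the bound $6\delta$ come from counting the roots of $L$ through the gcd computations $(p^m-1,p^\ell+1)=p^\delta+1$ of Lemma~\ref{mcd} and its analogue for $p^{3\ell}+1$, the hypothesis $m>6\ell$ ensuring that the four relevant exponents do not collide. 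This yields $R_{\mathcal{L}_{\ell,3\ell}}\subseteq\{m,\,m-2\delta,\,m-4\delta,\,m-6\delta\}$.

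Next I would determine, for each admissible rank $r=m-2k\delta$, which type can occur --- equivalently, prove the four vanishing statements $M_{m,1}=M_{m-2\delta,2}=M_{m-4\delta,1}=M_{m-6\delta,2}=0$ in case $(a)$ and the complementary ones in case $(b)$. In odd characteristic the type $\varepsilon_{Q_R}$ is the quadratic character of the (normalized) discriminant of $Q_R$. The idea is to diagonalize $Q_R$ and track how the quadratic character of the product of the diagonal entries evolves as the radical $\ker L$ grows by one $\ff_{p^\delta}$-step, anchoring the computation on the two coordinate axes $b=0$ and $a=0$, where $Q_R$ is one of Klapper's monomial forms and the type is given by Theorem~\ref{Thmimpar} in terms of $\varepsilon_\ell=(-1)^{\frac12 m_\ell}$. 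I expect this to show that the type is forced by the parity of $k$ together with that of $\tfrac12 m_\ell$, which is exactly the recorded alternation. After this step only four of the eight numbers $M_{r,i}$ remain unknown.

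To compute the four surviving numbers I would use the Weil sums $W(a,b)=\sum_{x\in\ff_{p^m}}\zeta_p^{Q_R(x)}$, for which \eqref{NQ+-} gives $W(a,b)=\varepsilon_{Q_R}\,p^{\,m-r/2}$. Expanding the $j$-th power and summing over $(a,b)$ collapses the sums over $a$ and $b$ to the conditions $\sum_i x_i^{p^\ell+1}=0$ and $\sum_i x_i^{p^{3\ell}+1}=0$, giving for every $j\ge0$ the identity
\[ \sum_{a,b\in\ff_{p^m}} W(a,b)^j \;=\; p^{2m}\,N_j, \qquad N_j=\#\Big\{(x_1,\dots,x_j)\in\ff_{p^m}^{\,j}:\ \textstyle\sum_i x_i^{p^\ell+1}=0,\ \sum_i x_i^{p^{3\ell}+1}=0\Big\}. \]
On the other hand the left-hand side equals $p^{jm}+\sum_{r,i}M_{r,i}(\varepsilon_i p^{\,m-r/2})^j$, with $\varepsilon_1=+1$ and $\varepsilon_2=-1$. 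Taking $j=0,1,2,3$ produces four linear equations whose coefficient matrix is a Vandermonde matrix in the four distinct signed values $\varepsilon_i p^{m-r/2}$, hence invertible; solving it yields the closed forms for $M_{r,i}$. The inputs $N_0=N_1=1$ and $N_2$ (which reduces to counting $u\in\ff_{p^m}^*$ with $u^{p^\ell+1}=u^{p^{3\ell}+1}=-1$, a gcd count via Lemma~\ref{mcd}) are routine.

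The main difficulty is twofold. The first hard point is the type determination of the second paragraph: controlling the quadratic character of the discriminant uniformly across the two-parameter family is delicate, because a direct diagonalization does not interact transparently with the growth of the radical, and this is where the bulk of the structural work lies. The second is the evaluation of the third moment $N_3$, i.e.\ the number of common solutions of $x_1^{p^\ell+1}+x_2^{p^\ell+1}+x_3^{p^\ell+1}=0$ and $x_1^{p^{3\ell}+1}+x_2^{p^{3\ell}+1}+x_3^{p^{3\ell}+1}=0$; this is a genuine point count that I would carry out by passing to the associated curve and using $m>6\ell$ to exclude degenerations, together with the evaluations of $(p^m-1,p^\ell+1)$ and $(p^m-1,p^{3\ell}+1)$. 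Once $N_2$ and $N_3$ are in hand, the linear system delivers the stated expressions, completing the proof.
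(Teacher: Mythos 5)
First, note that the paper offers no proof of this statement: Theorem \ref{ranks l3l} is imported verbatim (up to notation) from \cite{ZWZH1} (``the next theorem summarizes, in our notation, the results proved in \cite{ZWZH1}''), so there is no internal argument to compare yours against. Judged on its own merits, your outline follows the moment-method template that \cite{ZWZH1} itself is built on, and several ingredients are sound: identifying the radical with $\ker L$ for the self-adjoint $p$-linearized map $L$ (valid here because $p$ is odd, so $Q_R$ vanishes on the radical automatically); the fact that $L(x)^{p^{3\ell}}=b^{p^{3\ell}}x^{p^{6\ell}}+a^{p^{3\ell}}x^{p^{4\ell}}+a^{p^{2\ell}}x^{p^{2\ell}}+bx$ is an $\ff_{p^{(2\ell,m)}}$-linearized polynomial of bounded degree --- though note that to land in $\{0,2\delta,4\delta,6\delta\}$ you need $(2\ell,m)=2\delta$, which uses $m_\ell$ even, and not merely the $\ff_{p^\delta}$-linearity of $L$ (which only gives multiples of $\delta$); and the Vandermonde inversion of the moments $j=0,1,2,3$ once only four of the eight $M_{r,i}$ survive.

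However, two of the four inputs your linear system needs are exactly the ones you do not supply, and they are the hard ones. (1) The rank--type correspondence (the vanishing of $M_{m,1},M_{m-2\delta,2},M_{m-4\delta,1},M_{m-6\delta,2}$ in case $(a)$ and of the complementary four in case $(b)$) is asserted with only a plan to ``track the quadratic character of the discriminant as the radical grows,'' anchored at the axes $a=0$ and $b=0$. In odd characteristic $\varepsilon_{Q_R}=\eta(\Delta)\,(-1)^{r(p-1)/4}$ depends on both the discriminant $\Delta$ of the nondegenerate part and on $r\bmod 4$, and nothing in the sketch explains why $\eta(\Delta)$ is constant on each rank stratum of the two-parameter family $(a,b)$, nor how the growth of the radical (which proceeds in $2\delta$-steps, not $\delta$-steps) controls it; the anchor points $b=0$ only see the ranks $m$ and $m-2\delta$, so they cannot calibrate the strata $m-4\delta$ and $m-6\delta$. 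This is the structural heart of the theorem and is left unproved. (2) The third moment $N_3$, i.e.\ the number of solutions of $\sum_{i\le 3}x_i^{p^\ell+1}=\sum_{i\le 3}x_i^{p^{3\ell}+1}=0$, is deferred to ``passing to the associated curve'' with no identification of the curve, no reason its point count is accessible, and no indication of how $m>6\ell$ enters. Without a closed form for $N_3$, or a substitute fourth equation (for instance an independent count of the pairs $(a,b)$ whose associated $\ff_{p^{2\delta}}$-linearized polynomial has kernel of each dimension, which is essentially what \cite{ZWZH1} provides), the $4\times 4$ system is underdetermined and the stated formulas for the $M_{r,i}$ do not follow. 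In short: a reasonable roadmap consistent with the cited source, but with the two decisive steps missing.
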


In \cite{ZWZH1}, the distribution of ranks and types given in the previous theorem was used to calculate the spectra of the codes 
$\CC_{\LL}$ and $\CC_{\LL,1}$ with $\LL=\LL_{\ell,3\ell}$. 
Fortunately, this information is enough to calculate the spectra of $\CC_{\LL,0}$ and $\CC_{\LL,2}$ also, 
which follows directly from Theorems \ref{Spec C_L}, \ref{Spec C_L,1} and \ref{ranks l3l}.

\begin{thm} \label{spec cl3l}
Let $p$ be an odd prime and let $m,\ell$ be positive integers such that $m_{\ell}=m/(m,\ell)$ is even with $m>6\ell$.
Then, $\CC_{\mathcal{L}_{\ell,3\ell},0}$ is a $[n,2m+1,d]_p$-code with $n=p^m-1$ and $d=p^{m-1}(p-1)-d'$ where 
$$d'= \begin{cases} 
p^{\frac{m}{2}+3(m,\ell)-1}+1 				 & \qquad \text{if $\frac 12 m_\ell$ even}, \\[1.5mm]
(p-1)p^{\frac{m}{2}+3(m,\ell)-1}		 & \qquad \text{if $\frac 12 m_\ell$ odd},
\end{cases}$$
%On the other hand, 
and $\CC_{\mathcal{L}_{\ell,3\ell},2}$ is a $[n,3m+1,\hat{d}]_p$-code with $\hat d = d$ if $\frac 12 m_\ell$ is even and 
$\hat d = d-1$ if $\frac 12 m_\ell$ is odd. 
%$\hat{d} = p^{m-1}(p-1) - \bar{d}$ %}{q^{(m,\ell)}+1}$ % \qquad \text{and} \qquad 
%and 
%$$\bar{d}= \begin{cases}
%p^{\frac{m}{2}+3(m,\ell)-1}+1 & \qquad \text{if $\tfrac 12 m_{\ell}$ even}, \\[1,5mm] 
%(p-1)p^{\frac{m}{2}+3(m,\ell)-1}+1 & \qquad \text{if $\tfrac 12 m_{\ell}$ odd}.
%\end{cases}$$  
%In particular, $q-1$ divides $d$ if $\frac 12 m_\ell$ is odd.
The weight distributions of the codes $\CC_{\mathcal{L}_{\ell,3\ell},0}$ and $\CC_{\mathcal{L}_{\ell,3\ell},2}$
%$\CC_{\ell,1}$ and $\CC_{\ell,2}$ are given by Tables 7 and 8 below. 
%Then, {\blue the codes $\CC_{\mathcal{L}_{\ell,3\ell},0}$ and $\CC_{\mathcal{L}_{\ell,3\ell},2}$ have dimensions $2m+1$ and $3m+1$ respectively, and their weight distributions} 
are given by Tables 9 and 10 below. 
\end{thm}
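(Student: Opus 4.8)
The plan is to obtain Theorem \ref{spec cl3l} as a direct specialization of the general spectra of Theorems \ref{Spec C_L} and \ref{Spec C_L,1}, feeding in the rank-and-type data for $\LL_{\ell,3\ell}$ recorded in Theorem \ref{ranks l3l}. The family $\LL_{\ell,3\ell} = \langle x^{p^\ell}, x^{p^{3\ell}} \rangle$ has $s=2$ generators with exponents $\ell_1 = \ell < \ell_2 = 3\ell$, and the hypothesis $m > 6\ell$ guarantees $3\ell < \tfrac m2$, so the condition $1 \le \ell_1 < \ell_2 < \tfrac m2$ demanded by those theorems holds with $q = p$. First I would read off the dimensions: Theorem \ref{Spec C_L} gives $\dim \CC_{\LL,0} = ms+1 = 2m+1$ and Theorem \ref{Spec C_L,1} gives $\dim \CC_{\LL,2} = m(s+1)+1 = 3m+1$, matching the claimed parameters.

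For the spectra I would simply substitute. By Theorem \ref{ranks l3l} the family is even rank with $R_\LL = \{m, m-2\delta, m-4\delta, m-6\delta\}$, where $\delta = (m,\ell)$, and at each rank exactly one of $M_{r,1}, M_{r,2}$ is nonzero, the survivor being the explicit rational expression listed there; which one survives is governed by the parity of $\tfrac12 m_\ell$. Inserting these $M_{r,i}$ into Table 2 (over all $r \in R_\LL$ and $i = 1,2$) yields the spectrum of $\CC_{\LL,0}$, namely Table 9, and inserting them into Table 4 yields the spectrum of $\CC_{\LL,2}$, namely Table 10. This step is mechanical but bookkeeping-heavy, since the $M_{r,i}$ are cumbersome fractions; as a consistency check I would verify, in the spirit of Remark \ref{rho weight}(ii), that the frequencies sum to $p^{2m+1}$ and $p^{3m+1}$ respectively.

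Finally, to extract the minimal distances I would inspect the weight columns of the resulting tables. Every nonzero weight has the form $p^m - p^{m-1}$ plus or minus a term proportional to $p^{m - \frac r2 - 1}$, possibly decreased by $1$; since the magnitude of the subtracted contribution grows as $r$ decreases, the minimum is attained at the smallest rank $r = m-6\delta$, where $m - \tfrac r2 - 1 = \tfrac m2 + 3\delta - 1$. The parity split now enters through which type survives at that rank. When $\tfrac12 m_\ell$ is odd only type $1$ occurs there, so the minimizing weight of $\CC_{\LL,0}$ is the $w_{2,1}$-entry $p^{m-1}(p-1) - (p-1)p^{\frac m2 + 3\delta - 1}$, whence $d' = (p-1)p^{\frac m2+3\delta-1}$; when $\tfrac12 m_\ell$ is even only type $2$ occurs, and the $w_{3,2}$-entry $p^{m-1}(p-1) - (p^{\frac m2+3\delta-1}+1)$ gives $d' = p^{\frac m2+3\delta-1}+1$. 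For $\CC_{\LL,2}$ the analogous minimizers are the entries $w_{6,1}$ (odd case) and $w_{7,2}$ (even case) at $r = m-6\delta$, which evaluate to $d-1$ and $d$ respectively; here I would only need to check that their frequencies do not vanish, which holds because the relevant integer factors stay positive for the even rank $r = m-6\delta \ge 2$. The main obstacle is organizational rather than conceptual: routing each surviving $M_{r,i}$ to the correct table row in both parity cases, and confirming that no weight at a larger rank or the opposite type undercuts the claimed minimum.
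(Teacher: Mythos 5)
Your proposal is correct and follows exactly the route the paper takes: the paper offers no separate proof for this theorem, stating only that it "follows directly from Theorems \ref{Spec C_L}, \ref{Spec C_L,1} and \ref{ranks l3l}," i.e.\ the same substitution of the rank-and-type data of $\mathcal{L}_{\ell,3\ell}$ into Tables 2 and 4 that you carry out. Your additional bookkeeping for the minimal distances (locating the minimizing entries at rank $m-6(m,\ell)$ according to the parity of $\tfrac12 m_\ell$) is sound and in fact more explicit than what the paper records.
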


We set these notations for the next two tables:
\begin{equation} \label{Ris}
\begin{split}
& F_0 = \tfrac{p^{2m+6\delta}-p^{2m+4\delta}-p^{2m+\delta}+p^{m+4\delta}+p^{m+\delta}-p^{6\delta}+\varepsilon_{\ell}(p^{\frac{3m}{2}+5\delta}-p^{\frac{3m}{2}+4\delta}-p^{\frac{m}{2}+5\delta}+p^{\frac{m}{2}+4\delta})}{p^{6\delta}+p^{5\delta}-p^{4\delta}+p^{2\delta}-p^{\delta}-1}, \\[2mm]
& F_1 = \tfrac{p^{2m-2\delta}(p^{7\delta}-p^{2\delta}-1)+p^{m-2\delta}(p^{5\delta}-p^{6\delta}+p^{2\delta}+1)-p^{3\delta}(p^{2\delta}-p^{\delta}+1)-\varepsilon_{\ell}(p^{\frac{3m}{2}}-p^{\frac{m}{2}})(\sum_{i=0}^{5}(-1)^{i+1}p^{i\delta})}{p^{6\delta}+p^{5\delta}-p^{4\delta}+p^{2\delta}-p^{\delta}-1}, \\[2mm]
& F_2 = \tfrac{p^{2m-3\delta}(p^{5\delta}+p^{\delta}-1)-p^{m-3\delta}(p^{6\delta}+p^{4\delta}+p^{\delta}-1)+p^{\delta}(p^{2\delta}-p^{\delta}+1)+\varepsilon_{\ell}(p^{\frac{3m}{2}-2\delta}-p^{\frac{m}{2}-2\delta})(\sum_{i=0}^{5}(-1)^{i+1}p^{i\delta})}{p^{6\delta}+p^{5\delta}-p^{4\delta}+p^{2\delta}-p^{\delta}-1}, \\[2mm]
& F_3 = \tfrac{p^{2m-3\delta}-p^{m}-p^{m-3\delta}+1-\varepsilon_{\ell}(p^{\frac{3m}{2}-\delta}-p^{\frac{3m}{2}-2\delta}-p^{\frac{m}{2}-\delta}+p^{\frac{m}{2}-2\delta})}{p^{6\delta}+p^{5\delta}-p^{4\delta}+p^{2\delta}-p^{\delta}-1}. 
\end{split}
\end{equation}
\begin{table}[H]%[!ht]
	\centering
	\caption{ Weight distribution of $\CC_{\mathcal{L}_{\ell,3\ell},0}$.} 
	\medskip
	\begin{tabular}{|c|c|c|}
		\hline 
		weight & frequency \\ 
		\hline \hline
		$0$ & $1$ \\
		\hline
		$p^m-p^{m-1}+(-1)^{\frac{m_{\ell}}{2}+1}(p-1)p^{\frac{m}{2}-1}$ & $F_0$  \\ 
		\hline
		$p^m-p^{m-1}+(-1)^{\frac{m_{\ell}}{2}}(p-1)p^{\frac{m}{2}+(m,\ell)-1}$ & $F_1$  \\
		\hline
		$p^m-p^{m-1}+(-1)^{\frac{m_{\ell}}{2}+1}(p-1)p^{\frac{m}{2}+2(m,\ell)-1}$ & $F_2$  \\
		\hline
		$p^m-p^{m-1}+(-1)^{\frac{m_{\ell}}{2}}(p-1)p^{\frac{m}{2}+3(m,\ell)-1}$ & $F_3$  \\
		\hline
		$p^m-1$ & $p-1$ \\
		\hline  
		$p^m-p^{m-1}+(-1)^{\frac{m_{\ell}}{2}}p^{\frac{m}{2}-1}-1$ & $(p-1)F_0$ \\
		\hline  
		$p^m-p^{m-1}+(-1)^{\frac{m_{\ell}}{2}+1}p^{\frac{m}{2}+(m,\ell)-1}-1$ & $(p-1)F_1$ \\
		\hline  
		$p^m-p^{m-1}+(-1)^{\frac{m_{\ell}}{2}}p^{\frac{m}{2}+2(m,\ell)-1}-1$ & $(p-1)F_2$ \\
		\hline  
		$p^m-p^{m-1}+(-1)^{\frac{m_{\ell}}{2}+1}p^{\frac{m}{2}+3(m,\ell)-1}-1$ & $(p-1)F_3$ \\
		\hline
	\end{tabular}
\end{table}

\begin{table}[h!]
	\centering
	\caption{ Weight distribution of $\CC_{\mathcal{L}_{\ell,3\ell},2}$.} 
	\medskip
	\resizebox{16cm}{!}{\begin{tabular}{|c|c|c|}
			\hline 
			weight & frequency \\ 
			\hline \hline
			$0$ & $1$ \\
			\hline
			$p^m-p^{m-1}$ &  $p^m-1+\sum_{i=0}^{3}R_{i}(p^m-p^{m-i(m,\ell)})$ \\ 
			\hline 
			$p^m-p^{m-1}+(-1)^{\frac{m_{\ell}}{2}+1}(p-1)p^{\frac{m}{2}-1}$ & $(p^{m-1}+(-1)^{\frac{m_{\ell}}{2}}(p-1)p^{\frac{m}{2}-1}) F_0$ \\
			\hline 
			$p^m-p^{m-1}+(-1)^{\frac{m_{\ell}}{2}}(p-1)p^{\frac{m}{2}+(m,\ell)-1}$ & $(p^{m-2(m,\ell)-1}+(-1)^{\frac{m_{\ell}}{2}+1}(p-1)p^{\frac{m}{2}-(m,\ell)-1})F_1$  \\
			\hline 
			$p^m-p^{m-1}+(-1)^{\frac{m_{\ell}}{2}+1}(p-1)p^{\frac{m}{2}+2(m,\ell)-1}$ & $(p^{m-4(m,\ell)-1}+(-1)^{\frac{m_{\ell}}{2}}(p-1)p^{\frac{m}{2}-2(m,\ell)-1})F_2$  \\
			\hline 
			$p^m-p^{m-1}+(-1)^{\frac{m_{\ell}}{2}}(p-1)p^{\frac{m}{2}+3(m,\ell)-1}$ & $(p^{m-6(m,\ell)-1}+(-1)^{\frac{m_{\ell}}{2}+1}(p-1)p^{\frac{m}{2}-3(m,\ell)-1})F_3$  \\ 
			\hline 
			$p^m-p^{m-1}+(-1)^{\frac{m_{\ell}}{2}}p^{\frac{m}{2}-1}$  & $(p^{m-1}+(-1)^{\frac{m_{\ell}}{2}+1}p^{\frac{m}{2}-1})(p-1)F_0$ \\
			\hline 
			$p^m-p^{m-1}+(-1)^{\frac{m_{\ell}}{2}+1}p^{\frac{m}{2}+(m,\ell)-1}$  & $(p^{m-2(m,\ell)-1}+(-1)^{\frac{m_{\ell}}{2}}p^{\frac{m}{2}-(m,\ell)-1})(p-1)F_1$ \\
			\hline 
			$p^m-p^{m-1}+(-1)^{\frac{m_{\ell}}{2}}p^{\frac{m}{2}+2(m,\ell)-1}$  & $(p^{m-4(m,\ell)-1}+(-1)^{\frac{m_{\ell}}{2}+1}p^{\frac{m}{2}-2(m,\ell)-1})(p-1)F_2$ \\
			\hline 
			$p^m-p^{m-1}+(-1)^{\frac{m_{\ell}}{2}+1}p^{\frac{m}{2}+3(m,\ell)-1}$  & $(p^{m-6(m,\ell)-1}+(-1)^{\frac{m_{\ell}}{2}}p^{\frac{m}{2}-3(m,\ell)-1})(p-1)F_3$ \\ 
			\hline  
			$p^m-1$ & $p-1$ \\
			\hline
			$p^m-p^{m-1}-1$ &  $(p-1)(p^m-1+\sum_{i=0}^{3}R_i(p^m-p^{m-i(m,\ell)}))$ \\ 
			\hline 
			$p^m-p^{m-1}-1+(-1)^{\frac{m_{\ell}}{2}+1}(p-1)p^{\frac{m}{2}-1}$ & $(p^{m-1}+(-1)^{\frac{m_{\ell}}{2}+1}p^{\frac{m}{2}-1})(p-1)F_0$  \\ 
			\hline 
			$p^m-p^{m-1}-1+(-1)^{\frac{m_{\ell}}{2}}(p-1)p^{\frac{m}{2}+(m,\ell)-1}$ & $(p^{m-2(m,\ell)-1}+(-1)^{\frac{m_{\ell}}{2}}p^{\frac{m}{2}-(m,\ell)-1})(p-1)F_1$  \\
			\hline 
			$p^m-p^{m-1}-1+(-1)^{\frac{m_{\ell}}{2}+1}(p-1)p^{\frac{m}{2}+2(m,\ell)-1}$ & $(p^{m-4(m,\ell)-1}+(-1)^{\frac{m_{\ell}}{2}+1}p^{\frac{m}{2}-2(m,\ell)-1})(p-1)F_2$  \\
			\hline 
			$p^m-p^{m-1}-1+(-1)^{\frac{m_{\ell}}{2}}(p-1)p^{\frac{m}{2}+3(m,\ell)-1}$ & $(p^{m-6(m,\ell)-1}+(-1)^{\frac{m_{\ell}}{2}}p^{\frac{m}{2}-3(m,\ell)-1})(p-1)F_3$  \\
			\hline 
			$p^m-p^{m-1}-1+(-1)^{\frac{m_{\ell}}{2}}p^{\frac{m}{2}-1}$  & $(p^{m}-p^{m-1}+(-1)^{\frac{m_{\ell}}{2}}p^{\frac{m}{2}-1})(p-1)F_0$ \\ 
			\hline 
			$p^m-p^{m-1}-1+(-1)^{\frac{m_{\ell}}{2}+1}p^{\frac{m}{2}+(m,\ell)-1}$  & $(p^{m-2(m,\ell)}-p^{m-2(m,\ell)-1}+(-1)^{\frac{m_{\ell}}{2}+1}p^{\frac{m}{2}-(m,\ell)-1})(p-1)F_1$ \\
			\hline 
			$p^m-p^{m-1}-1+(-1)^{\frac{m_{\ell}}{2}}p^{\frac{m}{2}+2(m,\ell)-1}$  & $(p^{m-4(m,\ell)}-p^{m-4(m,\ell)-1}+(-1)^{\frac{m_{\ell}}{2}}p^{\frac{m}{2}-2(m,\ell)-1})(p-1)F_2$ \\
			\hline 
			$p^m-p^{m-1}-1+(-1)^{\frac{m_{\ell}}{2}+1}p^{\frac{m}{2}+3(m,\ell)-1}$  & $(p^{m-6(m,\ell)}-p^{m-6(m,\ell)-1}+(-1)^{\frac{m_{\ell}}{2}+1}p^{\frac{m}{2}-3(m,\ell)-1})(p-1)F_3$ \\
			\hline  
	\end{tabular}}
\end{table}

\begin{rem}
The weight distributions of $\CC_{\mathcal{L}_{\ell,3\ell}}$ and $\CC_{\mathcal{L}_{\ell,3\ell},1}$ 
are determined by those of $\CC_{\mathcal{L}_{\ell,3\ell},0}$ and $\CC_{\mathcal{L}_{\ell,3\ell},2}$, respectively. 
More precisely, the weight distribution of $\CC_{\mathcal{L}_{\ell,3\ell}}$ is given by the first $5$ rows of Table 9, and the spectrum of $\CC_{\mathcal{L}_{\ell,3\ell},1}$ is given by the first $10$ rows of Table 10. Therefore, $\CC_{\mathcal{L}_{\ell,3\ell}}$ is a 
$4$-weight code, $\CC_{\mathcal{L}_{\ell,3\ell},0}$ and $\CC_{\mathcal{L}_{\ell,3\ell},1}$ are 9-weight codes and 
$\CC_{\mathcal{L}_{\ell,3\ell},2}$ is a $19$-weight code. 
%(\textit{ii}) More generally, if $\LL = \langle x^{q^{\ell_1}}, x^{q^{\ell_1}} \rangle$ and $\rho$ denotes the number of different non-zero ranks of the quadratic forms $Q_R(x) = \tr_{q^m/q} (xR(x))$ with $R\in \LL \smallsetminus\{0\}$, then $\CC_\LL$ has at most $
%4\rho$ non-zero weights, $\CC_{\LL,0}$ and $\CC_{\LL,1}$ have at most $8\rho +1$ non-zero weights and $\CC_{\LL,2}$ have at most $16 \rho +3$ non-zero weights. If the family $\LL$ takes on all the possible ranks and non of the numbers $M_{r,i}$ with $r\in R$ and $i=1,2$ vanish, the previous codes are $(4\rho)$-weight, $(8\rho+1)$-weight and 
%$(16\rho+3)$-weight, respectively. 
\end{rem}

As a direct consequence of Proposition \ref{cweCL} we obtain the following.

\begin{coro}
Under the hypothesis of Theorem \ref{spec cl3l}, the complete weight enumerator of $\CC_{\mathcal{L}_{\ell,3\ell}}$ is given by
$$W_{\CC_{\mathcal{L}_{\ell,3\ell}}}(z_0,\ldots,z_{p-1}) = z_{0}^{p^m-1} + 
\sum_{i=0}^{3} F_i \, z_{0}^{a_i} z_{1}^{b_i} \cdots z_{p-1}^{b_i},$$
where, for each $i=0,\ldots, 3$, the numbers $F_i$ are given in \eqref{Ris} and 
$$a_i = p^{m-1}+(-1)^{i}\varepsilon_{\ell}(p-1)p^{\frac m2 +i(m,\ell)-1}-1 \quad \text{and} \quad 
  b_i = p^{m-1}+(-1)^{i+1}\varepsilon_{\ell}\,p^{\frac m2 +i(m,\ell)-1}.$$
\end{coro}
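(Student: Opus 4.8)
The plan is to specialize Proposition~\ref{cweCL} to $q=p$ and $\mathcal{L}=\mathcal{L}_{\ell,3\ell}=\langle x^{p^\ell}, x^{p^{3\ell}}\rangle$. The hypotheses are met: by Theorem~\ref{ranks l3l} the family $\mathcal{L}_{\ell,3\ell}$ has the even rank property, and the exponents satisfy $1\le \ell<3\ell<\tfrac m2$ because $m>6\ell$, so $\mathcal{L}_{\ell,3\ell}$ is an ideal of the required form. Writing $j\in\{1,2\}$ for the type index (to avoid a clash with the rank index below), Proposition~\ref{cweCL} then yields
\begin{equation*}
W_{\CC_{\mathcal{L}_{\ell,3\ell}}}(z_0,\ldots,z_{p-1}) = z_0^{\,p^m-1} + \sum_{j=1}^{2}\sum_{r\in R_{\mathcal{L}_{\ell,3\ell}}} M_{r,j}\, z_0^{a(r,j)} z_1^{b(r,j)}\cdots z_{p-1}^{b(r,j)},
\end{equation*}
with $a(r,j)=p^{m-1}+(-1)^{j+1}(p-1)p^{m-\frac r2-1}-1$ and $b(r,j)=p^{m-1}+(-1)^{j}p^{m-\frac r2-1}$.

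Next I would collapse the double sum. By Theorem~\ref{ranks l3l} we have $R_{\mathcal{L}_{\ell,3\ell}}=\{m-2i\delta : i=0,1,2,3\}$ with $\delta=(m,\ell)$, and for each of these four ranks exactly one of the two types occurs, i.e.\ for each $i$ precisely one of $M_{m-2i\delta,1}$, $M_{m-2i\delta,2}$ is nonzero. Denote by $\sigma(i)\in\{1,2\}$ the surviving type at rank $m-2i\delta$. Reindexing by $r=m-2i\delta$, the sum reduces to $\sum_{i=0}^{3} M_{m-2i\delta,\sigma(i)}\, z_0^{a(r,\sigma(i))} z_1^{b(r,\sigma(i))}\cdots z_{p-1}^{b(r,\sigma(i))}$. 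Comparing the explicit values in parts (a) and (b) of Theorem~\ref{ranks l3l} with the definition \eqref{Ris}, one sees that $M_{m-2i\delta,\sigma(i)}=F_i$: the quantity $F_i$ is exactly the common value of these coefficients written through $\varepsilon_\ell=(-1)^{\frac12 m_\ell}$, so that $\varepsilon_\ell=-1$ recovers the case $\tfrac12 m_\ell$ odd and $\varepsilon_\ell=+1$ the case $\tfrac12 m_\ell$ even.

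It then remains to match the exponents. Substituting $r=m-2i\delta$ gives $m-\tfrac r2-1=\tfrac m2+i\delta-1$, so $a(r,\sigma(i))=p^{m-1}+(-1)^{\sigma(i)+1}(p-1)p^{\frac m2+i\delta-1}-1$, and similarly for $b$. The only point requiring care is the sign of the surviving type: Theorem~\ref{ranks l3l} shows that $\sigma(i)$ alternates parity with $i$, so that in both parity cases for $\tfrac12 m_\ell$ one has the identity $(-1)^{\sigma(i)+1}=(-1)^{i}\varepsilon_\ell$ (and hence $(-1)^{\sigma(i)}=(-1)^{i+1}\varepsilon_\ell$). Substituting this identity turns $a(r,\sigma(i))$ and $b(r,\sigma(i))$ into precisely the stated $a_i$ and $b_i$, completing the proof. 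The main---and essentially only---obstacle is this sign bookkeeping of $\sigma(i)$; there is no analytic content, since the complete weight enumerator formula and the rank/type data are supplied by Proposition~\ref{cweCL} and Theorem~\ref{ranks l3l} respectively.
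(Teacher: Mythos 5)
Your proof is correct and follows essentially the same route as the paper: both reduce the corollary to Proposition~\ref{cweCL} applied to $\mathcal{L}_{\ell,3\ell}$ together with the rank/type data of Theorem~\ref{ranks l3l}, the only difference being that you carry out the type-index ($\sigma(i)$) and sign bookkeeping explicitly where the paper instead reads the weights and frequencies off Table~9 and uses the relations $a_i=p^m-1-c_i$, $b_i=c_i/(p-1)$. Your identity $(-1)^{\sigma(i)+1}=(-1)^i\varepsilon_\ell$ checks out in both parity cases of $\tfrac12 m_\ell$, so the substitution $r=m-2i(m,\ell)$ does yield exactly the stated $a_i$, $b_i$ and $F_i$.
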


\begin{proof}
By the previous remark, the weight enumerator of $\CC$ is 
$W_{\CC}(x) = 1 + \sum_{i=0}^{3} R_{i}\, x^{c_i}$ where 
\begin{equation} \label{ci} 
c_i = (p-1)\big(p^{m-1}+(-1)^{i+1} \varepsilon_{\ell} \,p^{\frac m2 +i(m,\ell)+1}\big).
\end{equation} 
Thus, by Proposition \ref{cweCL}, we have 
$W_{\CC}(z_0,z_1,\ldots,z_{p-1}) = z_{0}^{p^m-1} + \sum_{i=0}^{3} F_i \, z_{0}^{a_i}z_{1}^{b_i} \cdots z_{p-1}^{b_i}$, 
where $a_i = p^m-1-c_{i}$ and 
$b_i = \frac{c_i}{p-1}$. 
From these identities and \eqref{ci} we get the desired expressions for $a_i$ and $b_i$, and thus the result follows.
\end{proof}

\section{Optimal curves}
Fix $q=p^m$ with $p$ prime. In this section we will consider Artin-Schreier curves of the form 
\begin{equation} \label{Crb} 
C_{R,\beta} \, : \quad  y^p-y = x R(x) +\beta x
\end{equation}
where $R(x)$ is any $p$-linearized polynomial over $\ff_q$ and $\beta \in \ff_q$.
A good treatment of Artin-Schreier curves can be found in Chapter 3 by Güneri-Özbudak in \cite{GS}. 
They are associated to the codes $\CC_{\LL,*}$ studied in Sections 3--5 which are defined by quadratic forms 
$Q_{R}(x) = \tr_{p^m/p}(xR(x))$, or similar ones, of Section 2.
Given a family $\LL$ of $p$-linearized polynomials, we define the family 
\begin{equation} \label{family curves} 
\Gamma_\LL = \{ C_{R,\beta} : R \in \LL, \beta \in \ff_q \}
\end{equation}
of curves $C_{R,\beta}$ as in \eqref{Crb}.

We begin by showing necessary and sufficient conditions for the family $\LL$ 
to contain optimal curves (maximal or minimal); that is, curves attaining equality in the Hasse-Weil bound (see Theorem 5.2.3 in \cite{HS})
$$|\#C(\ff_q)-(q+1)| \le 2g\sqrt q .$$

\begin{prop} \label{conditions}
Assume $\LL$ is an even rank family of $p$-linearized polynomials over $\ff_{p^m}$. 
Let $R\in \LL$ and let $r$ be the rank of the quadratic form $Q_R(x) = \tr_{p^m/p}(xR(x))$ and $v = v_p(\deg R)$ be the $p$-adic value of $\deg R$. 
Then, the family $\Gamma_\LL$ in \eqref{family curves} contains optimal curves, both maximal and minimal, if and only if there is some 
$R\in \LL$ with $$v=\tfrac{m-r}2.$$ 
In this case we have: 
\begin{enumerate}[(i)]
\item If $p$ is odd, the curve $C_{R,\beta} \in \Gamma_\LL$ is maximal (resp.\@ minimal) if and only if 
the codeword $c_R(\beta) = ( \tr_{p^m/p}(xR(x) + \beta x) )_{x \in \ff_{p^m}^*}$ in $\CC_{\LL,1}$ 
has weight $w(c_R(\beta)) = w_{2,1}$ (resp.\@ $w_{2,2}$) as in Table 3. 

\item If $p=2$, the curve $C_{R,\beta} \in \Gamma_\LL$ is maximal (resp.\@ minimal) if and only if either 
$w(c_R(\beta)) = w_{2,1}$ (resp.\@ $w_{2,2}$) or else $w(c_R(\beta)) = w_{3,2}$ (resp.\@ $w_{3,1}$) as in Table 3.
\end{enumerate}
\end{prop}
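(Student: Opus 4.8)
The plan is to count the $\ff_q$-rational points of $C_{R,\beta}$ and to compute its genus, and then to read optimality directly off the Hasse--Weil bound by feeding in Lemma~\ref{SQ}. First I would count the affine points. Since $y\mapsto y^p-y$ is the Artin--Schreier map on $\ff_q=\ff_{p^m}$ with image $\ker\tr_{p^m/p}$, the fibre over a point $x$ consists of exactly $p$ points when $\tr_{p^m/p}(xR(x)+\beta x)=Q_R(x)+\tr_{p^m/p}(\beta x)=0$ and is empty otherwise; hence the number of affine points equals $p\,N_{Q_R,\beta}(0)$. The polynomial $xR(x)+\beta x=\sum_i\gamma_i x^{p^{\ell_i}+1}+\beta x$ has degree $d=p^{v}+1$ with $v=v_p(\deg R)$, which is prime to $p$, so there is a single, totally ramified point at infinity. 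Using the identity $S_{Q}(\beta)=p\,N_{Q,\beta}(0)-p^m$ from the proof of Lemma~\ref{SQ} (with ground field $\ff_p$), I obtain
\[
\#C_{R,\beta}(\ff_q)=p\,N_{Q_R,\beta}(0)+1=(q+1)+S_{Q_R}(\beta).
\]

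Next I would pin down the genus. Every exponent occurring in $xR(x)+\beta x$, namely the $p^{\ell_i}+1$ and $1$, is prime to $p$, so the polynomial is already Artin--Schreier reduced: no term can be absorbed into an $h^p-h$, and the conductor at infinity stays $d=p^{v}+1$. By the standard genus formula for such curves (Güneri--Özbudak, Ch.~3 in \cite{GS}), $g=\tfrac12(p-1)(d-1)=\tfrac12(p-1)p^{v}$. Consequently the Hasse--Weil bound reads $2g\sqrt q=(p-1)p^{\,v+\frac m2}$, and $C_{R,\beta}$ is optimal exactly when $|S_{Q_R}(\beta)|=(p-1)p^{\,v+\frac m2}$.

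Now I would invoke Lemma~\ref{SQ}: for $Q_R$ of even rank $r$ and type $\varepsilon=\varepsilon_{Q_R}$, the sum $S_{Q_R}(\beta)$ takes only the values $0$, $\varepsilon(p-1)p^{m-\frac r2}$ and $-\varepsilon\,p^{m-\frac r2}$ as $\beta$ ranges over $\ff_{p^m}$, the largest in absolute value being $(p-1)p^{m-\frac r2}$. Comparing with the bound, equality can be reached only through this largest value, and $(p-1)p^{m-\frac r2}=(p-1)p^{\,v+\frac m2}$ if and only if $v=\tfrac{m-r}2$; the smaller value $p^{m-\frac r2}$ can never meet the bound because $p-1$ is not a positive power of $p$. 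This simultaneously gives necessity (if $v\neq\frac{m-r}2$, no attainable $|S_{Q_R}(\beta)|$ equals $2g\sqrt q$) and sufficiency, yielding the criterion $v=\frac{m-r}2$.

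Finally I would match signs with the weights of Table~3 (there the ground field is $\ff_p$, so one sets $q=p$). Since $w(c_R(\beta))=p^m-p^{m-1}-\tfrac1p S_{Q_R}(\beta)$, the value $S_{Q_R}(\beta)=\varepsilon(p-1)p^{m-\frac r2}$ corresponds to $w_{2,i}$ with $\varepsilon=(-1)^{i+1}$, and $S_{Q_R}(\beta)=-\varepsilon\,p^{m-\frac r2}$ to $w_{3,i}$; moreover $\#C_{R,\beta}(\ff_q)-(q+1)=S_{Q_R}(\beta)$, so the sign of $S_{Q_R}(\beta)$ decides maximal versus minimal. When $v=\frac{m-r}2$, a type-$1$ form ($\varepsilon=+1$) gives, for the $\beta$ realizing $w_{2,1}$, a maximal curve, and a type-$3$ form ($\varepsilon=-1$) gives, for the $\beta$ realizing $w_{2,2}$, a minimal one, which is exactly (i); both kinds occur in $\Gamma_\LL$ once the family carries forms of each type at the critical rank. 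For $p=2$ one has $p-1=1$, so the value $-\varepsilon\,p^{m-\frac r2}$ attached to $w_{3,i}$ also meets the bound $p^{m-\frac r2}$, adding $w_{3,2}$ (maximal) and $w_{3,1}$ (minimal); since both $+p^{m-\frac r2}$ and $-p^{m-\frac r2}$ are attained for suitable $\beta$ by the positive frequencies in Lemma~\ref{SQ}, a single such $R$ already produces both a maximal and a minimal curve, giving (ii). The step I expect to be the crux is the genus computation: one must verify that no Artin--Schreier reduction lowers the conductor (guaranteed here because all exponents are prime to $p$) and that the place at infinity is simple, so that $g$ depends only on $v=v_p(\deg R)$ and not on $r$. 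It is precisely this decoupling of the genus (governed by the degree) from the point count (governed by the rank and type) that produces the clean condition $v=\frac{m-r}2$.
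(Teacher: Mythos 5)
Your proposal is correct and follows essentially the same route as the paper: both count $\#C_{R,\beta}(\ff_{p^m})$ via the Artin--Schreier fibres (equivalently, Hilbert 90 and the weight $w(c_R(\beta))$, which is the same linear function of $S_{Q_R}(\beta)$), compute $g=\tfrac12(p-1)p^{v}$ from the degree $p^{v}+1$ being prime to $p$, and then match the Hasse--Weil bound against the value distribution of Lemma~\ref{SQ} (i.e.\@ Table 3 with $q=p$) to extract the condition $v=\tfrac{m-r}{2}$ and the $p=2$ exceptional weights $w_{3,i}$. Your remark that for odd $p$ obtaining \emph{both} a maximal and a minimal curve requires forms of both types at the critical rank is a fair point of care that the paper's own proof glosses over.
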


\begin{proof}
Consider the cyclic code 
$\CC_{\LL,1} = \{ c_R(\beta) = (\tr_{p^m/p}(xR(x)+\beta x))_{x\in\ff_{p^m}^*} : R\in \LL, \, \beta \in \ff_{p^m}\}$
as in \eqref{codigoCL0}.
The weight of the codeword $c_R(\beta)$ is related to the number of  $\mathbb{F}_{p^m}$-rational points of the  
curve $C_{R,\beta}$ given in \eqref{Crb}.
In fact, by Hilbert's Theorem 90 we have  
$$ \tr_{p^m/p}(xR(x) + \beta x) = 0 \qquad \Leftrightarrow \qquad y^p-y = x R(x) + \beta x \quad \text{ for some } y \in \ff_{p^m}.$$  
Since $C_{R,\beta}$ is a $p$-covering of $\mathbb{P}^1$, considering the point at infinity, we get 
\begin{equation} \label{rat points}
 \#C_{R,\beta}(\mathbb{F}_{p^m})  =  1 + p \, \#\{ x\in \ff_{p^m}: \tr_{p^m/p}(xR(x)+\beta x) = 0 \}  = p^{m+1} +1 - p \, w(c_R(\beta))
\end{equation} 
where the values of $w(c_R(\beta))$ are given in Table 3 with $q=p$. 

On the other hand, as an application of the Riemann-Hurwitz formula, the curve $y^p-y=f(x)$ with $f(x) \in \ff_q[x]$, has genus 
$g=\tfrac 12 (p-1)(\deg f)$ since the degree of $f$ is coprime with $p$ (see Example 2.4 in \cite{GS}). Hence, 
$C_{R,\beta}$ has genus 
$$ g(C_{R,\beta}) = \tfrac 12 (p-1)(\deg R) = \tfrac 12 (p-1) p^v$$ 
since $(\deg xR(x)+\beta x, p ) = (p^v+1,p) =1$, for $R \ne 0$. % (see \cite{})
By the Hasse-Weil bound for curves we have that 
\begin{equation} \label{HWB}
p^m + 1 - (p-1) p^{v + \frac m2} \le \#C_{R}(\mathbb{F}_{p^m}) \le p^m + 1 + (p-1) p^{v + \frac m2}.
\end{equation}
	
To find maximal or minimal curves we need to ensure equality in the 
above inequalities; that is, by \eqref{rat points} and \eqref{HWB} we want that 
$$p^{m+1} - p\, w(c_R(\beta)) = p^m \pm (p-1) p^{v + \frac m2}, $$ 	
where the sign $+$ (resp.\@ $-$) corresponds to a maximal (resp.\@ minimal) curve.
Looking at Table 3 with $q=p$, we check that this could only happen if and only if 
$v = \frac{m-r}{2}$ and the weight $w(c_R(\beta))$ is $w_{2,1}$ (resp.\@ $w_{2,2}$) for a maximal (resp.\@ minimal) curve. 
Because of the presence of the factors $p-1$ in the weights, additional curves appear in the case $p=2$.
They correspond to $w(c_R(\beta))=w_{3,2}$ (resp.\@ $w_{3,1}$) for a maximal (resp.\@ minimal) curve.
Since the type of the quadratic form is fixed, only one of the two kind of maximal (or minimal) curves can appear if $p=2$. 
\end{proof}

Next, as an application of the spectra of cyclic codes, for a fixed number $\ell$ we consider the Artin-Schreier curves 
\begin{equation} \label{curve}
\begin{aligned}
C_{\gamma,\beta} :  & \ y^p-y = \gamma x^{p^{\ell}+1} + \beta x,     & \qquad \gamma\in \ff_{p^m}^*, \, \beta\in\mathbb{F}_{p^m} \\[1mm]
C_{\gamma_1,\gamma_2,\beta} : & \ y^p-y = \gamma_1 x^{p^{3\ell}+1} + \gamma_2 x^{p^{\ell}+1} + \beta x, & 
    \qquad \gamma_1,\gamma_2 \in \ff_{p^m}^*, \, \beta\in\mathbb{F}_{p^m}
\end{aligned}
\end{equation}
related to the codes $\CC_{\ell,1}$ and $\CC_{\{\ell,3\ell\},1}$ of Section 4 and 5, respectively; and we will show 
that the families $\{ C_{\gamma,\beta} \}$ 
and $\{ C_{ \gamma_1,\gamma_2, \beta } \}$  
contain several maximal and minimal curves. 
 
We begin by computing the $\ff_{p^m}$-rational points of the curves in the first family $\{ C_{ \gamma,\beta} \}$.

\begin{prop} \label{rationalpoints} 
Let $m$ and $\ell$ be positive integers such that $m_{\ell}$ is even and let $p$ a prime number. Consider the curve 
$C_{\gamma,\beta}$ as in \eqref{curve}
with $\gamma\in \ff_{p^m}^*$ and $\beta\in\mathbb{F}_{p^m}$. 
Fix $\gamma=\alpha^t$ and put $\varepsilon_{\ell} = (-1)^{\frac 12 m_{\ell}}$. Then, we have:
\begin{enumerate}[(a)]
\item If $p>2$, with $\tfrac 12 m_{\ell}$ even and $t\equiv 0\pmod{p^{(m,\ell)}+1}$, then 
$$\# C_{\gamma,\beta}(\mathbb{F}_{p^m}) = \left\{
\begin{array}{lll}
		p^m+1     													&& \text{for }  p^m-p^{m-2(m,\ell)}  \text{ $\beta$'s}, \\[1mm]
		p^m+1-p^{(m,\ell)}(p-1)p^{\frac m2} && \text{for }  p^{m-2(m,\ell)-1}-(p-1)p^{\frac{m}{2}-(m,\ell)-1} \ \text{ $\beta$'s}, \\[1mm]
		p^m+1+p^{(m,\ell)}p^{\frac m2} 			&& \text{for } (p^{m-2(m,\ell)-1}+ p^{\frac{m}{2}-(m,\ell)-1})(p-1) \text{ $\beta$'s}.
\end{array}	\right. $$
		
\item If $p>2$, with $\frac 12 m_{\ell}$ even and $t\not\equiv 0\pmod{p^{(m,\ell)}+1}$, then 
$$\# C_{\gamma,\beta}(\mathbb{F}_{p^m}) = \left\{
		\begin{array}{lll}
		p^m+1+(p-1)p^{\frac m2}    && \text{for } \ p^{m-1}+(p-1)p^{\frac m2}  \text{ $\beta$'s}, \\[1mm]
		p^m+1-p^{\frac m2} 				 && \text{for } \ (p-1)(p^{m-1}-p^{\frac{m}{2}-1}) \text{ $\beta$'s}. 
	\end{array} \right. $$

\item If $p>2$, with $\frac 12 m_{\ell}$ odd and $t\equiv \tfrac{p^{(m,\ell)}+1}{2}\pmod{p^{(m,\ell)}+1}$, then 
$$\# C_{\gamma,\beta}(\mathbb{F}_{p^m}) = \left\{
		\begin{array}{lll}
		p^m+1      													&& \text{for } p^m-p^{m-2(m,\ell)} \text{ $\beta$'s}, \\[1mm]
		p^m+1+p^{(m,\ell)}(p-1)p^{\frac m2} && \text{for } p^{m-2(m,\ell)-1}+(p-1)p^{\frac{m}{2}-(m,\ell)-1} \text{ $\beta$'s}, \\[1mm]
		p^m+1-p^{(m,\ell)}p^{\frac m2} 			&& \text{for } (p^{m-2(m,\ell)-1}- p^{\frac{m}{2}-(m,\ell)-1})(p-1) \text{ $\beta$'s}. 
		\end{array}		\right. $$

\item If $p>2$, with $\frac 12 m_{\ell}$ odd and $t\not\equiv \tfrac{p^{(m,\ell)}+1}{2}\pmod{p^{(m,\ell)}+1}$, then 
$$\# C_{\gamma,\beta}(\mathbb{F}_{p^m}) = \left\{		
\begin{array}{lll}
		p^m +1-(p-1)p^{\frac m2}    && \text{for } p^{m-1}-(p-1)p^{\tfrac{m}{2}-1}  \text{ $\beta$'s}, \\[1mm]
		p^m +1+p^{\frac m2} 				&& \text{for } (p^{m-1}+p^{\frac{m}{2}-1})(p-1) \text{ $\beta$'s}.
		\end{array}		\right. $$
		
\item If $p=2$ and $\gamma\in S_{2,m}(\ell) = \{ x^{2^\ell+1} : x\in \ff_{2^m}^*\}$ then 
$$\# C_{\gamma,\beta}(\mathbb{F}_{2^m}) = \left\{
		\begin{array}{lll}
		2^m +1     																			 && \text{for } 2^m-2^{m-2(m,\ell)} \text{ $\beta$'s}, \\[1mm]
		2^m +1 - \varepsilon_{\ell} \, 2^{\frac m2 + (m,\ell)} && 
		\text{for } 2^{m-2(m,\ell)-1}-\varepsilon_{\ell} \, 2^{\frac{m}{2}-(m,\ell)-1} \text{ $\beta$'s}, \\
		2^m+1+\varepsilon_{\ell} \, 2^{ \frac m2 + (m,\ell)} && \text{for } 2^{m-2(m,\ell)-1}+\varepsilon_{\ell} \, 2^{\frac{m}{2}-(m,\ell)-1} 
		\text{ $\beta$'s}.	\end{array} \right. $$
		
\item If $p=2$ and $\gamma \not\in S_{2,m}(\ell)$ then 
$$\# C_{\gamma,\beta}(\mathbb{F}_{2^m}) = \left\{
		\begin{array}{lll}
		2^m+1+\varepsilon_{\ell} \, 2^{\frac m2} && \text{for } 2^{m-1}+\varepsilon_{\ell} \, 2^{\frac{m}{2}-1} \text{ $\beta$'s}, \\[1mm]
		2^m+1-\varepsilon_{\ell} \, 2^{\frac m2} && \text{for } 2^{m-1}-\varepsilon_{\ell} \, 2^{\frac{m}{2}-1} \text{ $\beta$'s}. 
		\end{array} \right. $$
\end{enumerate}
\end{prop}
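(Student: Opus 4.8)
The plan is to reduce the point count directly to the exponential sum $S_{Q_{\gamma,\ell}}(\beta)$ of Section 2, whose distribution over $\beta$ is already known (Lemma~\ref{SQ}), and then read off the rank and type of $Q_{\gamma,\ell}$ from Klapper's theorems.

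First I would fix $\gamma \in \ff_{p^m}^*$ and set $R(x) = \gamma x^{p^\ell}$, so that $xR(x) + \beta x = \gamma x^{p^\ell+1} + \beta x$ and $Q_R = Q_{\gamma,\ell}$. Then $C_{\gamma,\beta}$ is exactly the curve $C_{R,\beta}$ of \eqref{Crb}, and its codeword $c_\gamma(\beta)$ lives in $\CC_{\ell,1}$. By the Hilbert~90 argument of Proposition~\ref{conditions} (see \eqref{rat points}) we have $\#C_{\gamma,\beta}(\ff_{p^m}) = p^{m+1} + 1 - p\,w(c_\gamma(\beta))$, while the proof of Theorem~\ref{Spec C_L,1} gives $w(c_\gamma(\beta)) = p^m - p^{m-1} - \tfrac1p S_{Q_{\gamma,\ell}}(\beta)$ (taking $q=p$). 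Substituting, the two leading terms cancel and I obtain the clean identity
\[ \#C_{\gamma,\beta}(\ff_{p^m}) = p^m + 1 + S_{Q_{\gamma,\ell}}(\beta). \]
Thus the whole statement amounts to the distribution of $S_{Q_{\gamma,\ell}}(\beta)$ over $\beta \in \ff_{p^m}$, shifted by $p^m+1$.

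Next I would invoke Lemma~\ref{SQ}: for $Q_{\gamma,\ell}$ of even rank $r$ and type $\varepsilon$, the sum $S_{Q_{\gamma,\ell}}(\beta)$ takes the three values $0$, $\varepsilon(p-1)p^{m-\frac r2}$ and $-\varepsilon p^{m-\frac r2}$ with the frequencies listed there. It then remains only to determine $r$ and $\varepsilon$ in each of the six cases, which is precisely the content of Klapper's Theorems~\ref{Thmpar} and~\ref{Thmimpar}. For odd $p$ the four cases (a)--(d) match Theorem~\ref{Thmimpar}(a)--(d) verbatim once one notes, via Lemma~\ref{mcd}, that $t \equiv 0 \pmod{p^{(m,\ell)}+1}$ is the condition $t \in X_{q,m}(\ell)$ and $t \equiv \tfrac{p^{(m,\ell)}+1}{2} \pmod{p^{(m,\ell)}+1}$ is $t \in Y_{q,m}(\ell)$; this pins down $(r,\varepsilon) = (m-2(m,\ell),\,-\varepsilon_\ell)$ in cases (a),(c) and $(r,\varepsilon)=(m,\,\varepsilon_\ell)$ in cases (b),(d). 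For $p=2$, cases (e),(f) match Theorem~\ref{Thmpar}: $\gamma \in S_{2,m}(\ell)$ gives rank $m-2(m,\ell)$ and type $-\varepsilon_\ell$, while $\gamma \notin S_{2,m}(\ell)$ gives rank $m$ and type $\varepsilon_\ell$.

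With $r$ and $\varepsilon$ in hand I would plug each pair into the three-valued formula, simplify the exponents (using $m - \tfrac r2 = \tfrac m2 + (m,\ell)$ when $r = m-2(m,\ell)$ and $m - \tfrac r2 = \tfrac m2$ when $r = m$), and add $p^m+1$; the three resulting point counts and their $\beta$-frequencies are then exactly the rows of each case. The only genuinely delicate point is the sign bookkeeping between the type $\varepsilon$ of the form and the sign $\varepsilon_\ell = (-1)^{\frac12 m_\ell}$ appearing in the tables, together with the observation that when $p=2$ and $\gamma \notin S_{2,m}(\ell)$ the rank equals $m$, so the frequency $p^m - p^r$ of the value $0$ vanishes and case (f) has only two rows instead of three. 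Everything else is routine exponent arithmetic.
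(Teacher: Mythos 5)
Your proposal is correct and follows essentially the same route as the paper: the paper likewise reduces $\#C_{\gamma,\beta}(\mathbb{F}_{p^m})$ to $p^{m+1}+1-p\,w(c_{\gamma,\beta})$ via the Hilbert 90 argument and then reads off the weights and their $\beta$-frequencies using Klapper's Theorems~\ref{Thmpar} and~\ref{Thmimpar} together with the spectra of Tables 5--8, which were themselves derived from Lemma~\ref{SQ}; you merely bypass the tables and invoke Lemma~\ref{SQ} directly, which if anything makes the per-$\gamma$ distribution over $\beta$ more transparent. (Incidentally, your computation gives the frequency $p^{m-1}+(p-1)p^{\frac m2 -1}$ in the first row of case (b), which is what is needed for the frequencies to sum to $p^m$; the exponent $\frac m2$ printed in the statement is a typo, not a flaw in your argument.)
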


\begin{proof}
Consider the cyclic code 
$\CC_{\ell,1} = \{ c_{\gamma,\beta} = (\tr_{p^m/p}(\gamma x^{p^{\ell}+1}+\beta x))_{x\in \mathbb{F}_{p^m}^{*}} : 
\gamma, \beta \in \mathbb{F}_{p^m}\}$. 
By the same argument as in the previous proof, we have that
$$\#C_{\gamma,\beta}(\mathbb{F}_{p^m}) = p^{m+1}+1 - p \, w(c_{\gamma,\beta}).$$ 
Thus, the number of rational points of $C_{\gamma, \beta}$ are obtained from Tables~5--8, by using Theorems \ref{Thmpar} and 
\ref{Thmimpar}, by straightforward calculations.
\end{proof}

We now show the existence of optimal curves in the family $\{C_{\gamma,\beta}\}$. We will use Proposition \ref{conditions} to prove the existence of optimal curves and Proposition \ref{rationalpoints} to count the number of them.

\begin{thm} \label{teo minimal}
Let $p$ be a prime number. Let $m$ and $\ell$ non-negative integers with $\ell\mid m$ such that $m_{\ell}=\frac{m}{\ell}$ is even and 
$\gamma=\alpha^t\in\mathbb{F}_{p^m}$. Then, we have the following:
\begin{enumerate}[(a)]
\item Let $p$ be odd. Then, the curve $C_{\gamma,\beta}$ as in \eqref{curve} is \

\hspace{-.975cm} (i) minimal if $\frac{1}{2} m_{\ell}$ is even and $t\equiv 0\pmod{p^\ell+1}$, 
for $p^{m-2\ell-1}-(p-1)p^{\frac{m}{2}-\ell-1}$ elements $\beta$, \

\hspace{-.975cm} (ii) maximal if $\frac{1}{2} m_{\ell}$ is odd and $t\equiv \tfrac{p^\ell+1}{2} \pmod{p^\ell+1}$, 
for	$p^{m-2\ell-1}+(p-1)p^{\frac{m}{2}-\ell-1}$ elements $\beta$.
	
	\smallskip
\item Let $p=2$ and $\gamma\in S_{2,m}(\ell) = \{x^{2^\ell+1} : x\in\ff_{2^m}^*\}$. Then, \

\hspace{-.975cm} (i) there are $2^{m-2\ell-1}-2^{\frac{m}{2}-\ell-1}$ elements $\beta$ such that $C_{\gamma,\beta}$ is minimal and

\hspace{-.975cm} (ii) there are $2^{m-2\ell-1}+2^{\frac{m}{2}-\ell-1}$ elements $\beta$ such that $C_{\gamma,\beta}$ is maximal.
\end{enumerate}
\end{thm}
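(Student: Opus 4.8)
The plan is to combine Proposition~\ref{conditions}, which characterizes when the family $\Gamma_\LL$ contains optimal curves, with Klapper's rank-and-type distribution (Theorems~\ref{Thmpar} and~\ref{Thmimpar}) and the explicit point counts of Proposition~\ref{rationalpoints}. First I would record that here $R(x)=\gamma x^{p^\ell}$, so $\deg R=p^\ell$ and the $p$-adic valuation is $v=v_p(\deg R)=\ell$; moreover $\ell\mid m$ forces $(m,\ell)=\ell$ and $m_\ell=m/\ell$. By Proposition~\ref{conditions}, a curve $C_{\gamma,\beta}$ can be optimal only when $v=\tfrac{m-r}{2}$, that is, only when the rank of $Q_{\gamma,\ell}$ equals $r=m-2\ell$. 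Thus the first substantive step is to pin down, via Klapper's theorems, exactly which $\gamma=\alpha^t$ produce this rank, and to record the corresponding type.

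For $p$ odd I would split according to the parity of $\tfrac12 m_\ell$. If $\tfrac12 m_\ell$ is even then $\varepsilon_\ell=1$, and Theorem~\ref{Thmimpar}(a) gives that $Q_{\gamma,\ell}$ has rank $m-2\ell$ and type $-1$ precisely when $t\equiv 0\pmod{p^\ell+1}$; Proposition~\ref{conditions}(i) then forces the optimal curves in this class to be minimal (weight $w_{2,2}$ of Table~3). If $\tfrac12 m_\ell$ is odd then $\varepsilon_\ell=-1$ and Theorem~\ref{Thmimpar}(c) gives rank $m-2\ell$ and type $+1$ exactly when $t\equiv\tfrac{p^\ell+1}{2}\pmod{p^\ell+1}$, so by Proposition~\ref{conditions}(i) the optimal curves are maximal (weight $w_{2,1}$). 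The number of optimal $\beta$'s is then read off directly from the relevant row of Proposition~\ref{rationalpoints}: case~(a) yields the minimal value $p^m+1-(p-1)p^{\ell+m/2}$ for $p^{m-2\ell-1}-(p-1)p^{m/2-\ell-1}$ values of $\beta$, and case~(c) yields the maximal value for $p^{m-2\ell-1}+(p-1)p^{m/2-\ell-1}$ values of $\beta$. One must also observe that the other nonzero count in those rows, $p^m+1\pm p^{\ell+m/2}$, lies strictly inside the Hasse--Weil bound (since $p-1>1$) and so contributes no optimal curves; this is precisely why each congruence class produces optimal curves of only one kind.

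For $p=2$ I would instead invoke Theorem~\ref{Thmpar}(a): when $\gamma\in S_{2,m}(\ell)$ the form $Q_{\gamma,\ell}$ has rank $m-2\ell$ (of type $\mp1$ according to $\varepsilon_\ell=\pm1$), so Proposition~\ref{conditions} again guarantees optimal curves. The key point is the coincidence $w_{2,1}=w_{3,2}$ and $w_{2,2}=w_{3,1}$ valid when $q=2$, which underlies Proposition~\ref{conditions}(ii): each type now contributes both a maximal and a minimal weight, so regardless of the sign of $\varepsilon_\ell$ one obtains optimal curves of both kinds. Since the genus is $g=2^{\ell-1}$, the Hasse--Weil extremes are $2^m+1\pm 2^{m/2+\ell}$, and reading Proposition~\ref{rationalpoints}(e) gives $2^{m-2\ell-1}-2^{m/2-\ell-1}$ values of $\beta$ with minimal count $2^m+1-2^{m/2+\ell}$ and $2^{m-2\ell-1}+2^{m/2-\ell-1}$ values with the maximal count, uniformly in $\varepsilon_\ell$.

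The main obstacle I expect is bookkeeping rather than any conceptual difficulty: one must keep the three competing sign conventions aligned, namely $\varepsilon_\ell=(-1)^{m_\ell/2}$, the type $\varepsilon_Q=\pm1$ coming from Klapper, and the index $i\in\{1,2\}$ labelling the weights $w_{2,i},w_{3,i}$ in Table~3, and then select exactly the rows of Proposition~\ref{rationalpoints} that saturate the Hasse--Weil bound. The genuinely delicate case is $p=2$, where one must verify the weight coincidences underlying Proposition~\ref{conditions}(ii) so as not to overlook the maximal curves arising from the $w_{3,\cdot}$ column, and check that the two values of $\varepsilon_\ell$ yield identical counts.
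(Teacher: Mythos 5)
Your proposal is correct and follows essentially the same route as the paper: Proposition~\ref{conditions} reduces optimality to the rank condition $r=m-2\ell$, Klapper's Theorems~\ref{Thmpar} and~\ref{Thmimpar} identify the relevant $\gamma=\alpha^t$ and the type (hence maximal vs.\ minimal via Table~3), and Proposition~\ref{rationalpoints} supplies the counts of $\beta$'s. Your additional checks --- that the $w_{3,i}$ weights fall strictly inside the Hasse--Weil bound for odd $p$, and that the weight coincidences $w_{2,1}=w_{3,2}$, $w_{2,2}=w_{3,1}$ at $q=2$ make the binary counts independent of $\varepsilon_\ell$ --- are exactly the bookkeeping the paper leaves implicit.
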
 

\begin{proof}
Consider the family $\LL = \langle x^{p^\ell} \rangle$ of $p$-linearized polynomials over $\ff_{p^m}$, with $p$ prime. 
By Klapper's Theorems \ref{Thmpar} and \ref{Thmimpar}, $\LL$ is an even rank family. Thus, the family of curves $\Gamma_\LL$ in 
\eqref{Crb} is in fact the family $\{C_{\gamma,\beta}\}$ in \eqref{curve}.
Now, applying Proposition \ref{conditions}, by using Tables 3 and 7 and Theorems 
\ref{Thmpar} and \ref{Thmimpar}, we get the existence part of the statement. 
Finally, invoking Proposition~\ref{rationalpoints} we get the number of such optimal curves.
\end{proof}

\begin{exam}
Suppose that $p=2$, $m=4$ and $\ell=1$. Consider the curve  
$$ C_{\gamma,\beta} : y^2+y = \gamma x^3+\beta x, \qquad \gamma \in \ff_{16}^*, \quad \beta \in \ff_{16}$$ 
which is in particular an elliptic curve. Suppose that $\gamma \in S_{2,4}(1)$. Then, by Theorem \ref{teo minimal},
$C_{\gamma,\beta}$ is minimal for only one element $\beta$ and it is maximal for 3 elements $\beta$. 

If $\gamma= z^3$ for some $z\in \ff_{16}^*$ then, by the affine change of variable $u=zx$, 
the curve $C_{\gamma,\beta}$ turns out to be isomorphic to the curve 
$C_{1,\lambda} : y^2+y = u^3 + \lambda u$, where $\lambda = \beta z^{-1}$.
This curve is minimal ($9$ rational points) only for $\lambda=0$ and it is 
maximal ($25$ rational points) for $\lambda = 1, \alpha^{5}$ and $\alpha^{10}$. That is, 
$$y^2+y=u^3$$ 
is a minimal elliptic curve and 
$$y^2+y=u^3+u, \qquad y^2+y=u^3+\alpha^5 u, \qquad y^2+y=u^3+\alpha^{10} u$$ 
are maximal elliptic curves over $\ff_{16}$. 
\end{exam}

We now show that the family $\{ C_{\gamma_1, \gamma_2, \beta} \}$ contains optimal curves. 

\begin{prop}
Let $p$ be an odd prime and let $m$, $\ell$ be non-negative integers such that $\ell \mid m$, $m_{\ell} = \frac{m}{\ell}$ is even 
and $m>6\ell$. 
If $\frac 12 m_{\ell}$ is odd (resp.\@ even), the Artin-Schreier curve $C_{\gamma_1, \gamma_2, \beta}$ as in \eqref{curve} is 
maximal (resp.\@ minimal) for 
some $\gamma_1, \gamma_2 \in \ff_{p^m}^*$ and $\beta \in \ff_{p^m}$. 
\end{prop}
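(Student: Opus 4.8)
The plan is to identify the family $\{C_{\gamma_1,\gamma_2,\beta}\}$ with the family $\Gamma_\LL$ attached to $\LL = \LL_{\ell,3\ell} = \langle x^{p^\ell}, x^{p^{3\ell}}\rangle$ and then invoke Proposition \ref{conditions} together with the rank-and-type distribution of Theorem \ref{ranks l3l}. First I would record that $\ell \mid m$ gives $(m,\ell)=\ell$, so Theorem \ref{ranks l3l} applies: $\LL$ is an even rank family, $R_\LL = \{m,\,m-2\ell,\,m-4\ell,\,m-6\ell\}$, and the hypothesis $m>6\ell$ forces $m-6\ell>0$. Writing $R(x)=\gamma_1x^{p^{3\ell}}+\gamma_2x^{p^\ell}$ gives $xR(x)=\gamma_1x^{p^{3\ell}+1}+\gamma_2x^{p^\ell+1}$, so $C_{R,\beta}=C_{\gamma_1,\gamma_2,\beta}$ and hence $\Gamma_\LL=\{C_{\gamma_1,\gamma_2,\beta}\}$.

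Next I would locate the rank forcing optimality. When $\gamma_1,\gamma_2\in\ff_{p^m}^*$ the polynomial $R$ has degree $p^{3\ell}$, so $v=v_p(\deg R)=3\ell$. By Proposition \ref{conditions} a curve $C_{R,\beta}\in\Gamma_\LL$ is optimal exactly when $v=\tfrac{m-r}2$, that is, when $Q_R$ has rank $r=m-6\ell\in R_\LL$. So I would restrict attention to the forms of rank $m-6\ell$ and read off $w(c_R(\beta))$ from Table 3 (with $q=p$). The decisive step is matching the type of these forms to maximality versus minimality: by Theorem \ref{ranks l3l}, every rank-$(m-6\ell)$ form is of type $1$ when $\tfrac12 m_\ell$ is odd (so $M_{m-6\ell,2}=0$) and of type $3$ when $\tfrac12 m_\ell$ is even (so $M_{m-6\ell,1}=0$). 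By Proposition \ref{conditions}(i), such a curve is maximal iff $w(c_R(\beta))=w_{2,1}$ and minimal iff $w(c_R(\beta))=w_{2,2}$, and the associated Table 3 frequency is positive whenever $M_{m-6\ell,1}\ne0$, resp.\ $M_{m-6\ell,2}\ne0$; thus an admissible $\beta$ exists in each case. This yields maximal curves for $\tfrac12 m_\ell$ odd and minimal curves for $\tfrac12 m_\ell$ even, matching the statement.

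The delicate point, and the main obstacle, is producing an $R$ with \emph{both} coefficients nonzero, since Theorem \ref{ranks l3l} counts all of $\LL$. The condition $v=3\ell$ already forces $\gamma_1\ne0$. For $\gamma_2\ne0$ I would argue that a degenerate form $\gamma_1x^{p^{3\ell}}$ (with $\gamma_2=0$) has associated quadratic form $Q_{\gamma_1,3\ell}$, whose rank is $m$ or $m-2(m,3\ell)$ by Theorem \ref{Thmimpar}. If $3\nmid m_\ell$ then $(m,3\ell)=\ell$ and this rank is never $m-6\ell$, so every rank-$(m-6\ell)$ form automatically has $\gamma_2\ne0$. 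If $3\mid m_\ell$ then $(m,3\ell)=3\ell$ and the degenerate forms of rank $m-6\ell$ number at most $\tfrac{p^m-1}{p^{3\ell}+1}$, which grows like $p^{m-3\ell}$ and is dominated by $M_{m-6\ell,i}$---of order $p^{2m-9\ell}$---precisely because $m>6\ell$. In either case a rank-$(m-6\ell)$ form with $\gamma_1,\gamma_2\in\ff_{p^m}^*$ exists, which completes the argument. I expect the bulk of the proof to be a direct citation of Propositions \ref{conditions} and Theorem \ref{ranks l3l}, with the only genuine work being this comparison of magnitudes, in which the role of the hypothesis $m>6\ell$ becomes transparent.
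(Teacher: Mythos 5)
Your proof is correct and follows essentially the same route as the paper: identify the family with $\Gamma_\LL$ for $\LL=\langle x^{p^\ell},x^{p^{3\ell}}\rangle$, use Theorem~\ref{ranks l3l} to locate forms of rank $m-6\ell$ of the appropriate type (type $1$ for $\tfrac12 m_\ell$ odd, type $3$ for $\tfrac12 m_\ell$ even), and conclude via Proposition~\ref{conditions} with $v=3\ell$. Your closing paragraph, which verifies that a rank-$(m-6\ell)$ form with \emph{both} coefficients nonzero actually exists (splitting on $3\mid m_\ell$ and comparing $p^{m-3\ell}$ against $M_{m-6\ell,i}\sim p^{2m-9\ell}$ using $m>6\ell$), is in fact more careful than the paper's own proof, which only records $\deg R=p^{3\ell}$ (forcing $\gamma_1\neq 0$) and leaves the nonvanishing of $\gamma_2$ implicit.
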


\begin{proof}
The family $\LL = \langle x^{p^\ell}, x^{3\ell} \rangle$ of $p$-linearized polynomials over $\ff_{p^m}$ has the even rank property, by Theorem \ref{ranks l3l}.
By Table 10 and Theorem \ref{spec cl3l}, we have that if $\frac 12 m_{\ell}$ is odd (resp.\@ even) then there exists 
$R \in \mathcal{L}$ with $\deg R =p^{3 \ell}$ and $Q_R$ of rank $r=m-6\ell$ and type $1$ (resp.\@ 3). 
Thus, we have that $v=\tfrac{m-r}2 = 3\ell$, where $v=v_{p}(\deg R)$, and the result follows directly from Proposition \ref{conditions}.
\end{proof}

\begin{exam}
Take $p$ an odd prime, $\ell=1$ and $m>6$ even. 
Then, the Artin-Schreier curve 
$y^p-y = \gamma_1 x^{p^3+1} + \gamma_2 x^{p+1} + \beta x$ is maximal in $\ff_{p^{4k}}$ and minimal in $\ff_{p^{4k+2}}$ for any 
$k \ge 2$, for at least one $\gamma_1,\gamma_2 \in \ff_{p^m}^*$ and $\beta \in \ff_{p^m}$, 
where $\ff_{p^m}$ stands for $\ff_{p^{4k}}$ or $\ff_{p^{4k+2}}$ depending on the case. For instance,  
$$y^3-y = \gamma_1 x^{28} + \gamma_2 x^{4} + \beta x$$ 
is maximal in $\ff_{3^8}= \ff_{6561}$ and minimal in $\ff_{3^{10}} = \ff_{59049}$ for at least one $\gamma_1,\gamma_2, \beta$ in the corresponding field. Similarly, 
$$y^5 - y = \gamma_1 x^{125} + \gamma_2 x^{6} + \beta x$$
is maximal in $\ff_{5^8} = \ff_{390625}$ and minimal in $\ff_{5^{10}} = \ff_{9765625}$ 
for some elements $\gamma_1,\gamma_2, \beta$ in the ground field. 
\end{exam}

\end{document}